\newcommand{\Levy}{L\'{e}vy}
\newcommand{\R}{\mathbb{R}}
\newcommand{\F}{\mathscr{F}}
\newcommand{\e}{\varepsilon}
\newcommand{\cadlag}{c\`{a}dl\`{a}g}
\renewcommand{\P}{\mathbb{P}}
\numberwithin{equation}{section}
\renewcommand\section{\@startsection {section}{1}{\z@}%
{-3.5ex \@plus -1ex \@minus -.2ex}%
{2.3ex \@plus.2ex}%
{\normalfont\large\bf}}
\renewcommand\subsection{\@startsection {subsection}{1}{\z@}%
{-3.5ex \@plus -1ex \@minus -.2ex}%
{2.3ex \@plus.2ex}%
{\normalfont\normalsize\bf}}
\theoremstyle{plain}
\newtheorem{thm}{Theorem}[section]
\newtheorem{lem}[thm]{Lemma}
\newtheorem{prop}[thm]{Proposition}
\newtheorem*{thm*}{Theorem}
\theoremstyle{definition}
\newtheorem{Rem}[thm]{Remark}
\newtheorem*{Rem*}{Remark}
\begin{document}
\begin{center}
\Large \textbf{Two-point local time penalizations with various clocks for \Levy\ 
processes}
\end{center}
\begin{center}
Kohki IBA (Graduate School of Science, Osaka University)\\
Kouji YANO (Graduate School of Science, Osaka University)
\end{center}
\begin{center}
\small \today
\end{center}

\begin{abstract}
Long-time limit of one-dimensional \Levy\ processes weighted and normalized with respect to the exponential functional of two-point local times are studied. The limit processes may vary according to the choice of random clocks.
\end{abstract}

\section{Introduction}
A penalization problem is to study the long-time limit of the form
\begin{align}
\label{1}
\lim_{\tau\to \infty}\frac{\mathbb{P}_x[F_s\cdot\Gamma_\tau]}{\mathbb{P}_x[\Gamma_\tau]},
\end{align}
where $(X=(X_t)_{t\ge 0},(\F_t)_{t\ge 0},(\mathbb{P}_x)_{x\in \R})$ is a Markov process, $(\Gamma_t)_{t\ge 0}$ is a non-negative process called a \emph{weight}, $(F_s)_{s\ge 0}$ is a process of test functions adapted to $(\F_s)_{s\ge 0}$, and $\tau$ is a net of parametrized random times tending to infinity, called a \emph{clock}.

To solve this problem, we want to find a $(\F_s)$-martingale $(M_s^\Gamma)_{s\ge 0}$ and a function $\rho(\tau)$ of the clock $\tau$ such that
\begin{align}
\label{2}
\lim_{\tau\to \infty}\rho(\tau)\mathbb{P}_x[F_s\cdot \Gamma_\tau]=\mathbb{P}_x[F_s\cdot M_s^\Gamma]
\end{align}
holds for $s\ge 0$, $x\in \R$, and bounded $\F_s$-measurable functions $F_s.$ If $M_0^\Gamma>0$ under $\P_x$, the convergence (\ref{2}) implies
\begin{align}
\label{2-1}
\lim_{\tau\to \infty}\frac{\P_x[F_s\cdot \Gamma_\tau]}{\P_x [\Gamma_\tau]}=\P_x\left[F_s\cdot \frac{M_s^\Gamma}{M_0^\Gamma}\right],
\end{align}
which solves the penalization problem (\ref{1}).

We consider a one-dimensional \Levy\ process $X$ which is recurrent and for which every point is regular for itself. Let $L_t^x$ denote the local time of $x$ up to time $t$ for $X$ (subject to a suitable normalization). The weight process we consider is given as
\begin{align}
\label{3}
\Gamma_{a,b,t}^{\lambda_a,\lambda_b}:=e^{-\lambda_aL_t^a-\lambda_bL_t^b}
\end{align}
for two distinct real points $a$ and $b$, and two positive constants $\lambda_a$ and $\lambda_b.$

For the characteristic exponent $\Psi(\lambda)$ of $X$, i.e., $\mathbb{P}_0[e^{i\lambda X_t}]=e^{-t\Psi(\lambda)}$, we always assume the condition
\begin{align*}
\textbf{(A)}\ \int_0^\infty \left|\frac{1}{q+\Psi(\lambda)}\right|d\lambda<\infty\qquad \mathrm{for\ each}\ q>0.
\end{align*}
Let $T_x:=\inf \{t>0;\ X_t=x\}$ denote the hitting time of a point $x\in \R$. For $-1\le \gamma\le 1$, we say\footnote{To describe the penalization limits, our limit $(c,d) \stackrel{(\gamma)}{\to}\infty$ is more suitable than the limit $(c,d)\stackrel{\gamma}{\to}\infty$ of the equation (1.9) of Takeda--Yano \cite{TY}.}
\begin{align}
(c,d)\stackrel{(\gamma)}{\to}\infty\ \mathrm{when}\ c\to \infty,\ d\to \infty,\ \mathrm{and}\ \frac{d-c}{c+d}\to \gamma.
\end{align}
Here for the random clock $\tau=(\tau_\lambda)$, we adopt one of the following:
\begin{enumerate}
\item exponential\ clock: $\tau=(\bm{e}_q)$ as $q\to 0+$, where $\bm{e}_q$ has the exponential distribution with its parameter $q>0$ and is independent of $X$;
\item hitting\ time\ clock: $\tau=(T_c)$ as $c\to \pm \infty$, where $T_c$ is the first hitting time at $c$;
\item two-point hitting time clock: $\tau=(T_c\wedge T_{-d})$ as $(c,d)\stackrel{(\gamma)}{\to}\infty$;
\item inverse\ local\ time\ clock: $\tau=(\eta_u^c)$ as $c\to \pm \infty$, where $\eta^c=(\eta_u^c)_{u\ge 0}$ is an inverse local time.
\end{enumerate}
Then, our main theorem is as follows (see Theorems \ref{c46}, \ref{d45}, \ref{e26}, \ref{f24}, and \ref{e56} for the details):
\begin{thm}
For distinct points $a,b\in \R$, for constants $\lambda_a,\lambda_b>0$, and for a constant $-1\le \gamma \le 1$, there exists a positive function $\varphi_{a,b}^{(\gamma),\lambda_a,\lambda_b}(x)$ such that the process
\begin{align}
\label{0-1}
\left(M_{a,b,s}^{(\gamma),\lambda_a,\lambda_b}:=\varphi_{a,b}^{(\gamma),\lambda_a,\lambda_b}(X_s)e^{-\lambda_aL_t^a-\lambda_b L_{t}^b}\right)_{s\ge 0}
\end{align}
is a martingale, and the following assertions hold:
\begin{enumerate}
\item exponential clock: $\displaystyle\lim_{q\to 0+}r_q(0)\mathbb{P}_x\left[F_s\cdot \Gamma_{a,b,\bm{e}_q}^{\lambda_a,\lambda_b}\right]= \mathbb{P}_x\left[F_s \cdot M_{a,b,s}^{(0),\lambda_a,\lambda_b}\right],$
\item hitting time clock: $\displaystyle \lim_{c\to \pm \infty }h^B(c)\mathbb{P}_x\left[F_s\cdot \Gamma_{a,b,T_c}^{\lambda_a,\lambda_b}\right]=\mathbb{P}_x\left[F_s \cdot M_{a,b,s}^{(\pm 1),\lambda_a,\lambda_b}\right],$
\item two-point hitting time clock: $\displaystyle \lim_{(c,d)\stackrel{(\gamma)}{\to}\infty}h^C(c,-d)\mathbb{P}_x\left[F_s\cdot \Gamma_{a,b,T_c\wedge T_{-d}}^{\lambda_a,\lambda_b}\right]= \mathbb{P}_x\left[F_s \cdot M_{a,b,s}^{(\gamma),\lambda_a,\lambda_b}\right],$
\item inverse local time clock: $\displaystyle  \lim_{c\to \pm \infty}h^B(c)\mathbb{P}_x\left[F_s\cdot \Gamma_{a,b,\eta_u^c}^{\lambda_a,\lambda_b}\right]=\mathbb{P}_x\left[F_s \cdot M_{a,b,s}^{(\pm 1),\lambda_a,\lambda_b}\right],$
\end{enumerate}
where $r_q$ is the $q$-resolvent density (see (\ref{b7})), $h^B$ is defined by (\ref{b34}), and $h^C$ is defined by (\ref{b46}).
\end{thm}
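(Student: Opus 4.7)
The plan is to reduce each of the four assertions to an asymptotic analysis of the scalar Laplace functional $y\mapsto\mathbb{E}_y[\Gamma_{a,b,\tau}^{\lambda_a,\lambda_b}]$, combined with the strong Markov property. On the event $\{\tau>s\}$, the multiplicative decomposition $\Gamma_{a,b,\tau}^{\lambda_a,\lambda_b}=\Gamma_{a,b,s}^{\lambda_a,\lambda_b}\cdot(\Gamma_{a,b,\tau-s}^{\lambda_a,\lambda_b}\circ\theta_s)$ together with the Markov property yields
\[
\mathbb{P}_x\bigl[F_s\cdot \Gamma_{a,b,\tau}^{\lambda_a,\lambda_b}\mathbf{1}_{\{\tau>s\}}\bigr]=\mathbb{P}_x\Bigl[F_s\cdot\Gamma_{a,b,s}^{\lambda_a,\lambda_b}\cdot\mathbb{E}_{X_s}\bigl[\Gamma_{a,b,\tau-s}^{\lambda_a,\lambda_b}\bigr]\Bigr],
\]
so it suffices to prove that $\rho(\tau)\,\mathbb{E}_y[\Gamma_{a,b,\tau}^{\lambda_a,\lambda_b}]\to\varphi_{a,b}^{(\gamma),\lambda_a,\lambda_b}(y)$ in a mode strong enough to permit dominated convergence, and that $\{\tau\le s\}$ contributes negligibly in the limit.

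The computational engine is a two-point Kac-type formula. For the exponential clock, a direct resolvent manipulation, based on the identity $1-e^{-\lambda_a L^a_t-\lambda_b L^b_t}=\int_0^t e^{-\lambda_a L^a_s-\lambda_b L^b_s}(\lambda_a\,dL^a_s+\lambda_b\,dL^b_s)$ and the strong Markov property at $T_a$ and $T_b$, produces
\[
\mathbb{E}_y\bigl[\Gamma_{a,b,\bm{e}_q}^{\lambda_a,\lambda_b}\bigr]=1-\bigl(r_q(y-a),\,r_q(y-b)\bigr)\,A_q^{-1}\binom{\lambda_a}{\lambda_b},
\]
with $A_q=I+\operatorname{diag}(\lambda_a,\lambda_b)\bigl[r_q(\alpha-\beta)\bigr]_{\alpha,\beta\in\{a,b\}}$. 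Multiplying by $r_q(0)$ and letting $q\to 0+$ under assumption \textbf{(A)}, the singular parts of $r_q$ cancel and the surviving combinations of $r_q(0)-r_q(\cdot)$ assemble into $\varphi_{a,b}^{(0),\lambda_a,\lambda_b}$, giving assertion~(i). For assertions~(ii)--(iv), I would carry out the analogous matrix computation with a clock-dependent kernel: for $T_c$ use the resolvent killed on exit, for $T_c\wedge T_{-d}$ use the two-sided exit kernel, and for $\eta_u^c$ use \ito\ excursion theory at $c$ to factor $\mathbb{E}_y[\Gamma_{a,b,\eta_u^c}^{\lambda_a,\lambda_b}]$ as $\mathbb{E}_y[\Gamma_{a,b,T_c}^{\lambda_a,\lambda_b}]\cdot \exp(-u\,\kappa(\lambda_a,\lambda_b))$ with $\kappa$ the excursion-measure Laplace exponent. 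In every case the normalization $\rho(\tau)$ is dictated by the leading-order behavior of the potential-theoretic denominator appearing in the analogue of $A_q^{-1}$, which is what identifies $\rho$ with $h^B$ or $h^C$ depending on the clock.

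Once the four limits are established, the martingale property of $M_{a,b,s}^{(\gamma),\lambda_a,\lambda_b}$ is automatic: applying the convergence (\ref{2}) at two nested times $r\le s$ with nested test functions forces the identity $\mathbb{P}_x[M_{a,b,s}^{(\gamma),\lambda_a,\lambda_b}\mid \F_r]=M_{a,b,r}^{(\gamma),\lambda_a,\lambda_b}$, and positivity of $\varphi_{a,b}^{(\gamma),\lambda_a,\lambda_b}$ is inherited from its construction as a limit of nonnegative Laplace functionals divided by positive normalizations. The main obstacle I expect is the uniform-integrability step justifying dominated convergence, which is most delicate for the inverse-local-time clock in~(iv): there the event $\{\eta_u^c\le s\}$ depends on $u$ and the excursion compensation formula must be controlled uniformly in $u$ and in $y=X_s$, and the resulting limit must be matched to the one-point hitting-clock limit~(ii) so that the same normalization $h^B(c)$ and the same $\varphi_{a,b}^{(\pm 1),\lambda_a,\lambda_b}$ appear. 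A second delicate point is the joint asymptotic $(c,d)\stackrel{(\gamma)}{\to}\infty$ in~(iii), where the limit must be taken inside the $2\times 2$ matrix inverse in a way that produces $\gamma$ as a nontrivial interpolation parameter between the endpoint cases $\gamma=\pm1$ of~(ii); this is where the reparametrization $\frac{d-c}{c+d}\to\gamma$, rather than the ratio $d/c$, becomes essential, as already indicated in the footnote of the introduction.
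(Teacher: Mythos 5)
Your overall architecture is sound and, at the level of strategy, matches the paper: the reduction of the penalization limit to the pointwise and $L^1$ asymptotics of the scalar function $y\mapsto\mathbb{P}_y[\Gamma_\tau]$ via the Markov property on $\{\tau>s\}$ is exactly the paper's computation of $N_{a,b,t}$ versus $M_{a,b,t}$ (cf.\ (\ref{c49}) and (\ref{c53})), and your derivation of the martingale property from the limit applied at two nested times is a clean equivalent of the paper's argument that $M^{(\gamma)}$ is an $L^1$-limit of the closed martingales $\rho(\tau)\mathbb{P}_x[\Gamma_\tau\,|\,\F_t]$. Where you genuinely diverge is the computational engine for the exponential clock: your $2\times2$ Kac-type resolvent formula $\mathbb{P}_y[\Gamma_{\bm{e}_q}]=1-(r_q(\cdot))A_q^{-1}\lambda$ replaces the paper's excursion-theoretic evaluation of $I_{a,b}^q$ (Lemma \ref{c5}) combined with the linear system (\ref{c21}); your route is more compact and avoids excursion theory here, though for asymmetric \Levy\ processes you must track the distinction between $r_q(a-b)$ and $r_q(b-a)$ in the matrix, since $\mathbb{P}_x[e^{-qT_0}]=r_q(-x)/r_q(0)$ is not symmetric. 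For the other clocks the paper does not use a killed-resolvent matrix but rather decomposes over the order of the hitting times $T_a,T_b,T_c$ and solves the resulting linear systems (Lemmas \ref{d2}, \ref{d12}), reduces the two-point clock to the one-point clock via (\ref{e4}) together with $\mathbb{P}_{-d}[\Gamma_{T_c}]\to0$ and the ratio asymptotics (\ref{e13}), and for the inverse local time clock proves the factorization (\ref{f10}) plus $\mathbb{P}_c[\Gamma_{\eta_u^c}]\to1$ (Lemma \ref{f1}, which is precisely your claim that the excursion Laplace exponent $\kappa(\lambda_a,\lambda_b)$ at level $c$ vanishes as $c\to\pm\infty$). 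The points you defer are indeed where the paper spends its effort: the $L^1(\mathbb{P}_x)$ convergence of $\rho(\tau)\mathbb{P}_{X_s}[\Gamma_\tau]$ is obtained from Tsukada's $L^1$ convergence of $h_q(X_t)$ and the subadditivity domination (\ref{d51}), not from a generic uniform-integrability argument, and note that your positivity claim for $\varphi$ only yields nonnegativity, which is why the paper states the normalized limit (\ref{2-1}) conditionally on $M_{a,b,0}^{(\gamma),\lambda_a,\lambda_b}>0$.
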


Let us explain the backgrounds of our results. Roynette--Vallois--Yor (\cite{RVY1} and \cite{RVY2}) have studied the penalization with \emph{constant clock} $\tau=t$ for a Brownian motion. Let $B=(B_t)_{t\ge 0}$ be a one-dimensional standard Brownian motion and $(\F_t)_{t\ge 0}$ denote the natural filtration of $B$. In particular, when 
\begin{align}
\label{a2}
\Gamma_s=f(L_s^0)
\end{align}
for a non-negative integrable function $f$ and with $L=(L_s^0)_{s\ge 0}$ being the local time at $0$ of $B$, this problem is called the \emph{local time penalization}. In this case, the martingale $M^\Gamma$ in (\ref{2}) is given by
\begin{align}
\label{a3}
M_s^\Gamma=f(L_s^0)|B_s|+\int_0^\infty f(L_s^0+u)du.
\end{align}
This result for a Brownian motion has been generalized to other processes. For example, we refer to Debs \cite{D} for simple random walks, Najnudel--Roynette--Yor \cite{NRY} for Bessel processes and Markov chains, and Yano--Yano--Yor \cite{YYY} for symmetric stable processes.

The local time penalization problem using random clocks date back to the \emph{taboo process} of Knight \cite{Kn}. It was the problem of conditioning to avoid zero, which is considered to be a special case of the local time penalization with weight $\Gamma_s=1_{\{L_s^0=0\}}.$ This problem has been generalized to one-dimensional \Levy\ processes by Chaumont--Doney \cite{CD} for conditioning to stay positive, by Pant\'{i} \cite{Pa} for conditioning to avoid zero, and to one-dimensional diffusions by Yano--Yano \cite{YY} for conditioning to avoid zero and Profeta--Yano--Yano \cite{PYY} for local time penalization.

Our penalizations with weight (\ref{3}) are an extension of the "one-point" local time penalization in \cite{TY} to the "two-point" one. The results of \cite{TY} were as follows:
\begin{thm}[Takeda--Yano \cite{TY}]
Suppose that the condition \textbf{(A)} is satisfied. For $x\in \R\ $and $-1\le \gamma \le 1,$ we define
\begin{align}
\label{d18}
h^{(\gamma)}(x):=h(x)+\frac{\gamma}{m^2}x,
\end{align}
where $m^2=\mathbb{P}_0[X_1^2]$ and the function $h$ is a renormalized zero resolvent (see Proposition \ref{b28}). Then, for a non-negative integrable function $f$, the process
\begin{align}
\label{d18-1}
\left(M_s^{(\gamma,f)}:=h^{(\gamma)}(X_s)f(L_s^0)+\int_0^\infty f(L_s^0+u)du\right)_{s\ge 0}
\end{align}
is a martingale. Moreover if $M_0^{(\gamma)}>0$ under $\mathbb{P}_x$, for any $\F_s$-measurable bounded function $F_s$, it holds the following\footnote{The results of Takeda--Yano \cite{TY} about the two-point hitting time clocks involve several minor computational errors, all of which we correct in this paper; see \cite{ISY} for the details.}:
\begin{enumerate}
\item exponential clock: $\displaystyle \lim_{q\to 0+}r_q(0)\mathbb{P}_x\left[F_s\cdot f(L_{\bm{e}_q}^0)\right]=\mathbb{P}_x\left[F_s\cdot M_s^{(0,f)}\right],$
\item hitting time clock: $\displaystyle \lim_{a\to \pm\infty}h^B(a)\mathbb{P}_x\left[F_s\cdot f(L_{T_a}^0)\right]=\mathbb{P}_x\left[F_s\cdot M_s^{(\pm 1,f)}\right],$
\item two-point hitting time clock: $\displaystyle \lim_{(a,b)\stackrel{(\gamma)}{\to}\infty}h^C(a,-b)\mathbb{P}_x\left[F_s\cdot f(L_{T_a\wedge {T_{-b}}}^0)\right]=\mathbb{P}_x\left[F_s\cdot M_s^{(\gamma,f)}\right],$
\item inverse local time clock: $\displaystyle \lim_{a\to \pm\infty}h^B(a)\mathbb{P}_x\left[F_s\cdot f(L_{\eta_u^a}^0)\right]=\mathbb{P}_x\left[F_s\cdot M_s^{(\pm 1,f)}\right].$
\end{enumerate}
\end{thm}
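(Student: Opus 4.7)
The plan is to first establish the martingale property of $(M_s^{(\gamma,f)})_{s\ge 0}$ and then derive each of the four penalization limits by a common strategy: condition on $\F_s$, apply the (strong) Markov property, and reduce the problem to sharp asymptotics of a resolvent or exit-time quantity evaluated at $X_s$, with the local-time argument shifted by $L_s^0$. For the martingale property, set $\Phi^{(\gamma,f)}(y,\ell):=h^{(\gamma)}(y)f(\ell)+\int_\ell^\infty f(v)\,dv$, so that $M_s^{(\gamma,f)}=\Phi^{(\gamma,f)}(X_s,L_s^0)$. The Markov property at $s$ reduces matters to the identity
\begin{align*}
\P_y\bigl[\Phi^{(\gamma,f)}(X_t,L_t^0+\ell)\bigr]=\Phi^{(\gamma,f)}(y,\ell),\qquad y\in\R,\ \ell,t\ge 0.
\end{align*}
Decomposing $h^{(\gamma)}(x)=h(x)+\gamma x/m^2$, the linear piece yields a martingale because $\P_y[X_t]=y$, while the piece in $h$ follows from the defining asymptotics of the renormalized zero resolvent (Proposition \ref{b28}) combined with an excursion-theoretic computation that absorbs the discrepancy $f(L_t^0+\ell)-f(\ell)$ into the integral tail $\int_\ell^\infty f$.

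For the exponential clock, the memorylessness of $\bm{e}_q$ and conditioning on $\F_s$ yield
\begin{align*}
\P_x[F_s\cdot f(L_{\bm{e}_q}^0)]=\P_x\bigl[F_s\cdot e^{-qs}\,G_q(X_s,L_s^0)\bigr]+\varepsilon_q(s),
\end{align*}
where $G_q(y,\ell):=\P_y[f(L_{\bm{e}_q}^0+\ell)]$ and $r_q(0)\varepsilon_q(s)\to 0$ since $qr_q(0)\to 0$ under condition \textbf{(A)}. The key convergence is
\begin{align*}
r_q(0)\,G_q(y,\ell)\xrightarrow[q\to 0+]{}h(y)f(\ell)+\int_\ell^\infty f(v)\,dv,
\end{align*}
obtained by splitting the path at $T_0$, using that under $\P_0$ the inverse local time $\eta^0$ is a subordinator of Laplace exponent $1/r_q(0)$ (so $L^0_{\bm{e}_q}$ is exponential of rate $1/r_q(0)$), and invoking the pointwise convergence $r_q(0)-r_q(y)\to h(y)$.

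The hitting time, inverse local time, and two-point hitting time clocks all follow the same template. For the one-sided hitting clock, $\{T_c\le s\}$ contributes negligibly after multiplication by $h^B(c)$, and the problem reduces to the asymptotic $h^B(c)\,\P_y[f(L^0_{T_c}+\ell)]\to h^{(\pm 1)}(y)f(\ell)+\int_\ell^\infty f(v)\,dv$ as $c\to\pm\infty$; this is established by decomposing the number of excursions from $0$ before $T_c$ and identifying $h^{(\pm 1)}=h\pm x/m^2$ from the asymptotics of $h(y)/h^B(c)$ and $y/h^B(c)$. The inverse local time clock is analogous, with $u$ replacing elapsed real time and the exit at local time level $u$ at a fixed point $c$ handled by a direct subordinator computation. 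The two-point clock uses a Wiener--Hopf-type two-sided exit decomposition at $T_c\wedge T_{-d}$: the normalization $h^C(c,-d)$ is engineered so that the contributions from the two boundaries combine, as $(c,d)\stackrel{(\gamma)}{\to}\infty$, into the convex combination $h^{(\gamma)}=h+\gamma x/m^2$.

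The main obstacle is the two-point hitting time clock: one needs uniform asymptotics, as $(c,d)\stackrel{(\gamma)}{\to}\infty$, of the joint law of $(X_{T_c\wedge T_{-d}},L^0_{T_c\wedge T_{-d}})$ together with the precise identification of $h^C(c,-d)$ and sharp control of the remainder terms. As the footnote in the excerpt flags, it is precisely at this step that \cite{TY} contained computational errors, so the careful bookkeeping of the two-sided exit distribution and the correct choice of $h^C$ constitute the main technical work of the argument.
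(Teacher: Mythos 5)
The theorem you were asked to prove is quoted in this paper from Takeda--Yano \cite{TY} as background; the paper itself does not prove it, but it does carry out the exactly parallel program for the two-point weight $e^{-\lambda_aL_t^a-\lambda_bL_t^b}$ in Sections \ref{S3}--\ref{S6}, and your overall template for the four clocks (condition on $\F_s$, shift the local time by $L_s^0$, reduce to the asymptotics of $r_q(0)\P_y[f(L^0_{\bm{e}_q}+\ell)]$ via Proposition \ref{b41}, of $h^B(c)\P_y[f(L^0_{T_c}+\ell)]$ via Proposition \ref{b43}, etc.) is the right one and matches \cite{TY}. However, your treatment of the martingale property is essentially circular. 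The identity $\P_y[\Phi^{(\gamma,f)}(X_t,L_t^0+\ell)]=\Phi^{(\gamma,f)}(y,\ell)$ \emph{is} the martingale property restated, and nothing in Proposition \ref{b28} gives invariance of $h$ under the semigroup: $h(X_t)$ alone is not a martingale, and the whole point is the compensation between $h(X_t)$ and $L_t^0$ (a Tanaka-type formula in the spirit of \cite{Tukada}), which your phrase ``an excursion-theoretic computation that absorbs the discrepancy'' gestures at but does not supply. The route actually used here (proof of Theorem \ref{c46}) and in \cite{TY} avoids this entirely: $r_q(0)\P_x[f(L^0_{\bm{e}_q})\,|\,\F_t]$ is a martingale by construction, it converges a.s.\ and in $L^1(\P_x)$ to $M_t^{(0,f)}$, and $L^1$-limits of martingales are martingales; the cases $\gamma=\pm1$ and general $\gamma$ follow in the same way from the hitting-time and two-point clocks. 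You should either adopt that argument or actually prove the Tanaka formula.

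The second gap is the two-point hitting time clock, which you correctly single out as the delicate case but then leave as a description of the difficulty. No Wiener--Hopf factorization and no uniform asymptotics for the joint law of $(X_{T_c\wedge T_{-d}},L^0_{T_c\wedge T_{-d}})$ are needed. The argument (compare (\ref{e4})--(\ref{e18}) for the two-point weight) is elementary strong-Markov algebra: split $\P_x[f(L^0_{T_c\wedge T_{-d}})]$ over $\{T_c<T_{-d}\}$ and $\{T_{-d}<T_c\}$, express each restricted expectation through the unrestricted one-sided quantities by solving a $2\times2$ linear system obtained from the strong Markov property at $T_c$ and $T_{-d}$, and then use the ratios $h^B(c)/h^C(c,-d)\to 2/(1+\gamma)$ and $h^B(d)/h^C(c,-d)\to 2/(1-\gamma)$, which follow from (\ref{b29-0}), (\ref{b34}) and (\ref{b46}), to reduce everything to the one-sided limits already established; this is exactly where the convex combination $\tfrac{1+\gamma}{2}M^{(+1,f)}+\tfrac{1-\gamma}{2}M^{(-1,f)}=M^{(\gamma,f)}$ arises. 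As written, your proposal does not contain this step, and it is also the step where the constants must be checked carefully (it is precisely here that \cite{TY} contained the computational errors the present paper corrects).
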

\begin{Rem}
Note that when $m^2=\infty,$  we have
\begin{align}
\label{d19}
h^{(\gamma)}(x)\equiv h^{(0)}(x)\equiv h(x)
\end{align}
for any $-1\le \gamma\le 1.$
\end{Rem}

\begin{Rem}
The function $\varphi_{a,b}^{(\gamma),\lambda_a,\lambda_b}$ is given explicitly as follows (see Proposition \ref{e8}):
\begin{align*}
\label{c41}
\varphi_{a,b}^{(\gamma),\lambda_a,\lambda_b}(x)&=h^{(\gamma)}(x-a)-\mathbb{P}_x(T_b<T_a)h^{(\gamma)}(b-a)\\
&\qquad +\mathbb{P}_x(T_a<T_b)\cdot \frac{h^{(\gamma)}(a-b)}{1+\lambda_ah^B(a-b)}\\
&\qquad +\mathbb{P}_x(T_a<T_b)\cdot\frac{1}{1+\lambda_ah^B(b-a)}\cdot \frac{1+\lambda_a h^{(\gamma)}(b-a)}{\lambda_a+\lambda_b+\lambda_a\lambda_b h^B(a-b)}\\
&\qquad +\mathbb{P}_x(T_b<T_a)\cdot \frac{h^{(\gamma)}(b-a)}{1+\lambda_bh^B(a-b)}\\
&\qquad +\mathbb{P}_x(T_b<T_a)\cdot\frac{1}{1+\lambda_bh^B(b-a)}\cdot \frac{1+\lambda_b h^{(\gamma)}(a-b)}{\lambda_a+\lambda_b+\lambda_a\lambda_b h^B(a-b)}.
\stepcounter{equation}\tag{\theequation}
\end{align*}
Note that $\varphi_{a,b}^{(\gamma),\lambda_a,\lambda_b}(x)$ is symmetric with respect to $a$ and $b$, i.e., for $x\in \R,$
\begin{align}
\varphi_{a,b}^{(\gamma),\lambda_a,\lambda_b}(x)=\varphi_{b,a}^{(\gamma),\lambda_a,\lambda_b}(x).
\end{align}
When $m^2=\infty$, by (\ref{d19}), we have $\varphi_{a,b}^{(\gamma),\lambda_a,\lambda_b}(x)\equiv \varphi_{a,b}^{(0),\lambda_a,\lambda_b}(x)$ and hence $M_{a,b,t}^{(\gamma),\lambda_a,\lambda_b}=M_{a,b,t}^{(0),\lambda_a,\lambda_b}$ for any $-1\le \gamma\le 1.$
\end{Rem}

\begin{Rem}
\label{c45-1}
By letting $a=0,\ \lambda_a=\lambda>0,$ and letting $\lambda_b\to 0+$ in the definition (\ref{0-1}), we have
\begin{align*}
\label{c45-2}
M_{0,b,t}^{(\gamma),\lambda,0}:=\lim_{\lambda_b\to 0+}M_{0,b,t}^{(\gamma),\lambda,\lambda_b}=\left(h^{(\gamma)}(X_t)+\frac{1}{\lambda}\right)e^{-\lambda L_t}.
\stepcounter{equation}\tag{\theequation}
\end{align*}
This coincides with $M_t^{(\gamma,f)}$ in (\ref{d18-1}) with $f(x)=e^{-\lambda x}.$
\end{Rem}

In \cite{RVY2}, they consider the penalization problem when the weight process is given as
\begin{align}
\label{a7}
\Gamma_t=\exp \left\{-\int_{\R}L_t^xq(dx)\right\}
\end{align}
for a positive Radon measure $q(dx)$ on $\R$, which satisfies $0<\int_{\R} (1+|x|)q(dx)<\infty.$ This problem is called the \emph{Kac killing penalization}. Our weight process (\ref{3}) can be considered to be the case
\begin{align}
\label{a8}
q=\lambda_a\delta_a+\lambda_b\delta_b.
\end{align}
The case $\lambda_a>0$ and $\lambda_b=\infty$ is formally considered a one-point local time penalization with conditioning to avoid another point. The case $\lambda_a=\lambda_b=\infty$ is formally considered a conditioning to avoid two points. These cases are discussed in \cite{ISY}.

\subsection*{Organization}
This paper is organized as follows. In Section \ref{S2}, we prepare some general results of \Levy\ processes. In Sections \ref{S3}, \ref{S4}, \ref{S5}, and \ref{S6}, we discuss the penalization results with exponential clock, hitting time clock, two-point hitting time clock, and inverse local time clock, respectively.

%%%%%%%%%%%%%%%%%%%%%%%%%%%%%%%%%%%%%%%%%%%%%%%%%%%%%%%%%%%%%%%%%%%%%%%%%%%%%%%%%%%%%%%%%%%%%%%%%%%%%%%%%%%%%%%%%%%%%%%%%%%%%%%%%%%%%%%%%%%%%%%%%%%%%%%%%%%%%%%%%%%%%%%%%%%%%%%%%%%%%%%%%%%%%%%%%%%%%%%%%%%%%%%%%%%%%%%%%%%%%%%%

\section{Preliminaries}
\label{S2}
\subsection{\Levy\ process and resolvent density}
Let $(X,\mathbb{P}_x)$ be the canonical representation of a real valued \Levy\ process starting from $x\in \R$ on the \cadlag\ path space. For $t>0$, we denote by $\F_t^X=\sigma(X_s,\ 0\le s\le t)$ the natural filtration of $X$ and write $\F_t=\bigcap_{s>t}\F_s^X$. For $a\in \R$, let $T_a$ be the hitting time of $\{a\}$ for $X$, i.e.,
\begin{align}
\label{b1}
T_a=\inf\{t>0:\ X_t=a\}.
\end{align}
For $\lambda\in \R$, we denote by $\Psi(\lambda)$ the characteristic exponent of $X$, which satisfies 
\begin{align}
\label{b2}
\mathbb{P}_0\left[e^{i\lambda X_t}\right]=e^{-t\Psi(\lambda)}
\end{align}
for $t\ge 0.$ Moreover, by \Levy--Khinchin formula, it is denoted by
\begin{align}
\label{b3}
\Psi(\lambda)=iv\lambda+\frac{1}{2}\sigma^2 \lambda^2+\int_{\R}\left(1-e^{i\lambda x}+i \lambda x 1_{\{|x|<1\}}\right)\nu(dx),
\end{align}
where $v\in \R,\ \sigma\ge 0,\ $and $\nu$ is a measure on $\R$, called a \emph{\Levy\ measure}, with $\nu(\{0\})=0$ and $\int_{\R}(x^2\wedge 1)\nu(dx)<\infty.$

Throughout this paper, we always assume $(X,\mathbb{P}_0)$ is recurrent, i.e., 
\begin{align}
\label{b4}
\mathbb{P}_0\left[\int_0^\infty 1_{\{|X_t-a|<\e\}}dt\right]=\infty
\end{align}
for all $a\in \R$ and $\e>0$, and always assume the condition
\begin{align*}
\textbf{(A)}\ \int_0^\infty \left|\frac{1}{q+\Psi(\lambda)}\right|d\lambda<\infty\qquad \mathrm{for\ each}\ q>0.
\end{align*}
Then, the following conditions hold (Bretagnolle \cite{L1-1} and Kesten \cite{L1-2}):
\begin{enumerate}
\item[(1)] The process $X$ is not a compound Poisson process;
\item[(2)] $0$ is regular for itself, i.e., $\mathbb{P}_0(T_0=0)=1;$
\item[(3)] We have either $\sigma>0$ or $\displaystyle \int_{(-1,1)}|x|\nu(dx)=\infty$.
\end{enumerate}
It is known that $X$ has a bounded continuous resolvent density $r_q$:
\begin{align}
\label{b7}
\int_{\R}f(x)r_q(x)dx=\mathbb{P}_0\left[\int_0^\infty e^{-qt}f(X_t)dt\right]
\end{align}
holds for $q>0$ and non-negative measurable functions $f$. See, e.g., Theorems II.16 and II.19 of \cite{Ber}. Moreover, there exists an equality that connects the hitting time of $0$ and the resolvent density:
\begin{align}
\label{b9}
\mathbb{P}_x\left[e^{-qT_0}\right]=\frac{r_q(-x)}{r_q(0)}
\end{align}
for $q>0$ and $x\in \R.$ See, e.g., Corollary II.18 of \cite{Ber}. If condition \textbf{(A)} holds, then Tukada (Corollary 15.1 of \cite{Tukada}) showed that the resolvent density can be expressed as
\begin{align}
\label{b11}
r_q(x)=\frac{1}{2\pi}\int_{-\infty}^\infty \frac{e^{-i\lambda x}}{q+\Psi(\lambda)}d\lambda =\frac{1}{\pi}\int_0^\infty \Re \left(\frac{e^{-i\lambda x}}{q+\Psi(\lambda)}\right)d\lambda
\end{align}
for all $q>0$ and $x\in \R.$

%%%%%%%%%%%%%%%%%%%%%%%%%%%%%%%%%%%%%%%%%%%%%%%%%%%%%%%%%%%%%%%%%%%%%%%%%%%%%%%%%%%%%%%%%%%%%%%%%%%%%%%%%%%%%%%%%%%%%%%%%%%%%%%%%%%%%%%%%%%%%%%%%%%%%%%%%%%%%%%%%%%%%%

\subsection{Local time and excursion}
We denote by $\mathcal{D}$ the set of \cadlag\ paths $e:[0,\infty)\to \R\cup\{\Delta\}$ such that
\begin{align}
\label{b12}
\begin{cases}
e(t)\in \R\setminus \{0\}&\mathrm{for}\ 0<t<\zeta(e),\\
e(t)=\Delta &\mathrm{for}\ t\ge \zeta(e),
\end{cases}
\end{align}
where the point $\Delta$ is an isolated point and $\zeta$ is the excursion length, i.e.,
\begin{align}
\label{b13}
\zeta=\zeta(e):=\inf \{t>0:\ e(t)=\Delta\}.
\end{align}
Let $\Sigma$ denote the $\sigma$-algebra on $\mathcal{D}$ generated by cylinder sets.

Assume the condition \textbf{(A)} holds. Then, we can define a local time at $a\in \R$, which we denote by $L^a=(L_t^a)_{t\ge 0}$. It is defined by
\begin{align}
\label{b14}
L_t^a:=\lim_{\e\to 0+}\frac{1}{2\e}\int_0^t1_{\{|X_s-a|<\e\}}ds.
\end{align}
It is known that $L^a$ is continuous in $t$ and satisfies 
\begin{align}
\label{b15}
\mathbb{P}_x\left[\int_0^\infty e^{-qt}dL_t^a\right]=r_q(a-x)
\end{align}
for $q>0$ and $x\in \R$. See, e.g., Section V of \cite{Ber}. In particular, from this expression, $r_q(x)$ is non-decreasing as $q\to 0+.$

Let $\eta^a=(\eta_l^a)_{l\ge 0}$ be an inverse local time, i.e.,
\begin{align}
\label{b16}
\eta_l^a:=\inf\{t>0:\ L_t^a>l\}.
\end{align}
It is known that the process $(\eta^a,\mathbb{P}_a)$ is a possibly killed subordinator which has the Laplace exponent
\begin{align}
\label{b17}
\mathbb{P}_a\left[e^{-q\eta_l^a}\right]=e^{-\frac{l}{r_q(0)}}
\end{align}
for $l>0$ and $q>0$. See, e.g., Proposition V.4 of \cite{Ber}.

We denote $\epsilon_l^a$ for an excursion away from $a\in \R$ which starts at local time $l\ge 0$, i.e.,
\begin{align}
\label{b18}
\epsilon_l^a(t):=\begin{cases}
X_{t+\eta_{l-}^a}&\mathrm{for}\ 0\le t<\eta_{l}^a-\eta_{l-}^a,\\
\Delta&\mathrm{for}\ t\ge \eta_l^a-\eta_{l-}^a.
\end{cases}
\end{align}
Then, $(\epsilon_l^a)_{l\ge 0}$ is a Poisson point process, and we write $n^a$ for the characteristic measure of $\epsilon^a$. It is known that $(\mathcal{D},\Sigma,n^a)$ is a $\sigma$-finite measure space. See, e.g., Section IV of \cite{Ber}. For $B\in \mathscr{B}(0,\infty)\otimes \Sigma$, we define
\begin{align}
\label{b19}
N^a(B):=\#\{(l,e)\in B:\ \epsilon_l^a=e\}.
\end{align}
Then, $N^a$ is a Poisson random measure with its intensity measure $ds\times n^a(de).$ It is known that the subordinator $\eta^0$ has no drift and its \Levy\ measure is $n^0(T_0\in dx)$. Thus, we have
\begin{align}
\label{b20}
e^{-\frac{l}{r_q(0)}}=\mathbb{P}_0\left[e^{-q\eta_l^0}\right]=e^{-ln^0[1-e^{-qT_0}]}
\end{align}
for $l\ge 0.$ This implies that
\begin{align}
\label{b21}
n^0\left[1-e^{-qT_0}\right]=\frac{1}{r_q(0)}.
\end{align}
Now, we set 
\begin{align}
\label{b22}
\kappa:=\lim_{q\to 0+}\frac{1}{r_q(0)}=n^0(T_0=\infty).
\end{align}
It is known that $\kappa=0$ if and only if $X$ is recurrent. See, e.g., Theorem I.17 of \cite{Ber} and Theorem 37.5 of \cite{Sato}.

For the excursion measure, the following famous equality, called the compensation formula, is known: if a function $F:(0,\infty)\times \mathcal{D}\times \mathcal{D}\to [0,\infty)$ satisfies the following: 
\begin{enumerate}
\item $F$ is a measurable function;
\item For each $t>0$, $(\omega,e)\mapsto F(t,\omega,e)$ is $\F_t\otimes \Sigma$-measurable;
\item For each $e\in \mathcal{D}$, $(F(t,\cdot,e),\ t\ge 0)$ is almost surely a left-continuous process.
\end{enumerate}
Then, for $t\ge 0$, it holds that
\begin{align}
\label{b24}
\mathbb{P}_0\left[\int_{(0,t]}\int_{\mathcal{D}}F(s,X,e)N^0(ds\otimes de)\right]=\mathbb{P}_0\left[\int_0^t \int_{\mathcal{D}}F(s,X,e)n^0(de)ds\right].
\end{align}
See, e.g., Theorem 4.4 of \cite{Kyp}. Moreover, the excursion measure $n^0$ has the following form of the Markov property: it holds that for any stopping time $T<\infty$, any non-negative $\F_t$-measurable functional $Z_t$, and any non-negative measurable functional $F$ on $\mathcal{D}$,
\begin{align}
\label{b26}
n^0\left[Z_t F(X\circ \theta_T)\right]=\int n^0[Z_t;\ X_T\in dx]\mathbb{P}_x^0[F(X)],
\end{align}
where $\mathbb{P}_x^0$ is the distribution of the killed process upon $T_0.$ See, e.g., Theorem III.3.28 of \cite{Blu}.

%%%%%%%%%%%%%%%%%%%%%%%%%%%%%%%%%%%%%%%%%%%%%%%%%%%%%%%%%%%%%%%%%%%%%%%%%%%%%%%%%%%%%%%%%%%%%%%%%%%%%%%%%%%%%%%%%%%%%%%%%%%%%%%%%%%%%%%%%%%%%%%%%%%%%%%%%%%%%%%%%%%%%%

\subsection{The renormalized zero resolvent}
We define
\begin{align}
\label{b27}
h_q(x):=r_q(0)-r_q(-x)
\end{align}
for $q>0$ and $x\in \R.$ By (\ref{b11}), we have
\begin{align}
h_q(x)=\frac{1}{\pi}\int_0^\infty \Re \left(\frac{1-e^{i\lambda x}}{q+\Psi(\lambda)}\right)d\lambda.
\end{align}
It is clear that $h_q(0)=0$, and by (\ref{b9}), we have $h_q(x)\ge 0$. The following theorem plays a key role in our penalization results. Recall that $X$ is assumed recurrent.
\begin{prop}[Theorem 1.1 of \cite{TY}]
\label{b28}
If the condition $\textbf{(A)}$ is satisfied, then the following assertions hold:
\begin{enumerate}[(i)]
 \item For any $x\in \R$, $\displaystyle h(x):=\lim_{q\to 0+}h_q(x)$ exists and is finite;
 \item $h$ is continuous;
 \item $h$ is subadditive, that is, for $x,y\in \R,$ it holds that
\begin{align}
\label{b28-1}
h(x+y)\le h(x)+h(y).
\end{align}
\end{enumerate}
\end{prop}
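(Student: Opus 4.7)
The plan is to combine the Fourier representation~(\ref{b11}) with the hitting-time identity~(\ref{b9}) and the strong Markov property. Two elementary facts will be used throughout: since $\Re\Psi(\lambda)\ge 0$ we have $|q+\Psi(\lambda)|\ge|\Psi(\lambda)|$ for every $q\ge 0$, and by~(\ref{b9}) the quantity $h_q(x)=r_q(0)\bigl(1-\mathbb{P}_x[e^{-qT_0}]\bigr)$ is non-negative.

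For~(i), I would start from
\[
h_q(x)=\frac{1}{\pi}\int_0^\infty \Re\!\left(\frac{1-e^{i\lambda x}}{q+\Psi(\lambda)}\right)d\lambda
\]
and split the range of integration at some threshold $\Lambda>0$. On $\{\lambda\ge\Lambda\}$, the bound $|(1-e^{i\lambda x})/(q+\Psi(\lambda))|\le 2/|\Psi(\lambda)|$ provides a $q$-uniform integrable dominator thanks to \textbf{(A)} (applied at some fixed $q_0>0$, together with $|\Psi(\lambda)|\to\infty$ at infinity). On $\{\lambda<\Lambda\}$ I would Taylor expand $1-e^{i\lambda x}=-i\lambda x+O(\lambda^2)$ and note that the real part of the linear piece equals $-\lambda x\,\Im\Psi(\lambda)/|q+\Psi(\lambda)|^2$; together with the quadratic remainder this admits a $q$-uniform dominator that is integrable near $0$ by the \Levy--Khintchine structure of $\Psi$ in the recurrent regime. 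Dominated convergence then yields
\[
h(x)=\frac{1}{\pi}\int_0^\infty \Re\!\left(\frac{1-e^{i\lambda x}}{\Psi(\lambda)}\right)d\lambda<\infty.
\]
Part~(ii) follows from a second application of dominated convergence, since the dominators above are uniform in $x$ on compacts and the integrand is continuous in $x$.

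For~(iii) I would apply the strong Markov property at $T_{-x}$ to the $q$-resolvent: denoting by $U^{q,-x}$ the $q$-resolvent density of $X$ killed at $-x$,
\[
r_q(-x-y)=U^{q,-x}(0,-x-y)+\mathbb{P}_0[e^{-qT_{-x}}]\,r_q(-y)\ge \frac{r_q(-x)\,r_q(-y)}{r_q(0)},
\]
where the inequality drops the non-negative first term and the last identity uses~(\ref{b9}) together with spatial homogeneity. Substituting $r_q(-z)=r_q(0)-h_q(z)$ and expanding,
\[
h_q(x)+h_q(y)-h_q(x+y)\ge \frac{h_q(x)\,h_q(y)}{r_q(0)}.
\]
Letting $q\to 0+$, the right-hand side vanishes by~(i) and because $r_q(0)\uparrow\infty$ by recurrence (see~(\ref{b22})), so $h(x+y)\le h(x)+h(y)$.

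The main obstacle is the small-$\lambda$ analysis in step~(i): the integrand is not monotone in $q$ in the asymmetric case and its real part may change sign, so one must exploit the precise cancellation between the Taylor-linear part of $1-e^{i\lambda x}$ and $\Im\Psi(\lambda)$ to build a $q$-uniform $L^1$ dominator near the origin. Once~(i) is secured, (ii) and (iii) become essentially formal.
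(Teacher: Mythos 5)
Your part~(iii) is correct and is essentially the standard route: the first-passage decomposition of the resolvent at $T_{-x}$ gives $r_q(-x-y)\ge r_q(-x)r_q(-y)/r_q(0)$, and substituting $r_q(-z)=r_q(0)-h_q(z)$ yields $h_q(x+y)\le h_q(x)+h_q(y)-h_q(x)h_q(y)/r_q(0)\le h_q(x)+h_q(y)$ for every $q>0$, so subadditivity survives the limit once (i) is known (you do not even need the error term to vanish; its nonnegativity suffices). Keep in mind, though, that the present paper does not prove this proposition at all --- it imports it verbatim from Theorem~1.1 of \cite{TY} --- so your attempt must stand on its own.

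It does not, because part~(i) has a genuine gap at exactly the point where the whole content of the theorem sits. Writing $\Re\bigl((1-e^{i\lambda x})/(q+\Psi(\lambda))\bigr)=(1-\cos\lambda x)\,\tfrac{q+\Re\Psi(\lambda)}{|q+\Psi(\lambda)|^2}-\sin(\lambda x)\,\tfrac{\Im\Psi(\lambda)}{|q+\Psi(\lambda)|^2}$, the first piece is indeed harmless near $\lambda=0$ (it is dominated by $(1-\cos\lambda x)/\Re\Psi(\lambda)$, which is locally bounded since $\liminf_{\lambda\to 0}\Re\Psi(\lambda)/\lambda^2>0$), but the second piece is the crux: the best $q$-uniform bound it admits is of order $|x|\,\lambda\,|\Im\Psi(\lambda)|/|\Psi(\lambda)|^2$, and under \textbf{(A)} and recurrence alone there is no guarantee that this is integrable near the origin. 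Indeed, earlier works (Yano, Tsukada \cite{Tukada}) had to impose precisely such an extra integrability hypothesis on $\Im\Psi$ to run a domination argument, and the point of \cite{TY} was to remove it; their proof does not go through dominated convergence on the Fourier integral, but instead controls the symmetric combination $h_q(x)+h_q(-x)$ via the $q$-monotone quantity $h_q^B(x)=\P_0[\int_0^{T_x}e^{-qt}dL_t^0]$ of (\ref{b33}) and treats the antisymmetric combination (which carries the $\Im\Psi$ term) by a separate cancellation argument. Your own closing remark concedes that one "must exploit the precise cancellation" to build the dominator --- but that dominator need not exist, and no substitute argument is given; consequently your limiting formula $h(x)=\frac1\pi\int_0^\infty\Re\bigl((1-e^{i\lambda x})/\Psi(\lambda)\bigr)d\lambda$ is not even known to be an absolutely convergent integral. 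Since (ii) is derived from the same (unestablished) dominators, (i) and (ii) remain unproven. A minor, fixable point on the large-$\lambda$ range: integrability of $2/|\Psi(\lambda)|$ on $\{\lambda\ge\Lambda\}$ does not follow from \textbf{(A)} alone; one must first check $\liminf_{\lambda\to\infty}|\Psi(\lambda)|>0$ (true because $X$ is not compound Poisson) so that $|\Psi|\ge c\,|q_0+\Psi|$ there.
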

We call $h$ the \emph{renormalized zero resolvent}.
\begin{prop}[Theorem 1.2 of \cite{TY}]
\label{b29}
If the condition $\textbf{(A)}$ is satisfied, then the following assertions hold:
\begin{enumerate}[(i)]
 \item It holds that
 \begin{align}
 \label{b29-0}
\displaystyle \lim_{x\to \pm \infty}\frac{h(x)}{|x|}=\frac{1}{m^2}\in [0,\infty);
\end{align}
 \item For all $x\in \R$, it holds that
 \begin{align}
 \label{b29-1}
\lim_{y\to \pm \infty}\{h(x+y)-h(y)\}=\pm \frac{x}{m^2},
\end{align}
\end{enumerate}
where $m^2=P[X_1^2]\in (0,\infty]$.
\end{prop}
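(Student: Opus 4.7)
The plan is to analyse $h$ through its Fourier integral representation
\[
h(x)=\frac{1}{\pi}\int_0^\infty \Re\!\left(\frac{1-e^{i\lambda x}}{\Psi(\lambda)}\right)d\lambda,
\]
obtained by letting $q\to 0+$ in the representation derived from (\ref{b11}) for $h_q$; the passage is justified by Proposition \ref{b28}(i) combined with a dominated-convergence argument using condition \textbf{(A)}. Since $X$ is recurrent with $\mathbb{P}_0[X_1^2]<\infty$, the law of large numbers forces $\mathbb{P}_0[X_1]=0$ (otherwise $|X_t|\to\infty$, contradicting recurrence), so when $m^2<\infty$ one has the expansion $\Psi(\lambda)=(m^2/2)\lambda^2+o(\lambda^2)$ as $\lambda\to 0$ with real positive leading term. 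In the case $m^2=\infty$, the weaker fact $\Psi(\lambda)/\lambda^2\to\infty$ as $\lambda\to 0$ (which follows from Fatou applied to $\int(1-\cos\lambda x)/\lambda^2\,\nu(dx)$ after checking that recurrence rules out a nonzero linear term in $\Psi$) will suffice.

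For part (i), I substitute $\lambda=\mu/|x|$ to get
\[
\frac{h(x)}{|x|}=\frac{1}{\pi}\int_0^\infty \Re\!\left(\frac{1-e^{i\mu\,\mathrm{sgn}(x)}}{|x|^{2}\,\Psi(\mu/|x|)}\right)d\mu,
\]
and split the range at $\mu=\epsilon|x|$ for small fixed $\epsilon>0$. The tail $\int_{\epsilon|x|}^\infty$ equals $|x|^{-1}\int_\epsilon^\infty \Re((1-e^{i\lambda x})/\Psi(\lambda))\,d\lambda$, which is $O(|x|^{-1})\to 0$ because $\int_\epsilon^\infty|1/\Psi(\lambda)|\,d\lambda<\infty$ (derived from \textbf{(A)} and the lower bound on $|\Psi|$ on $[\epsilon,\infty)$ from $X$ not being compound Poisson). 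The head piece has integrand dominated by $\min(2,|\mu|)/(c\mu^2)$ via the quadratic lower bound on $|\Psi|$ near $0$; dominated convergence together with the classical identity $\int_0^\infty(1-\cos\mu)\mu^{-2}\,d\mu=\pi/2$ produces the limit $1/m^2$. The $m^2=\infty$ case is handled by the same splitting, with the pointwise limit being $0$.

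For part (ii), write
\[
h(x+y)-h(y)=\frac{1}{\pi}\int_0^\infty \Re\!\left(\frac{e^{i\lambda y}(1-e^{i\lambda x})}{\Psi(\lambda)}\right)d\lambda
\]
and split at some fixed $\delta>0$. On $[\delta,\infty)$ the integrand is absolutely integrable in $\lambda$ and oscillates in $y$, so Riemann--Lebesgue kills this piece as $|y|\to\infty$. On $(0,\delta)$, use the Taylor expansion $(1-e^{i\lambda x})/\Psi(\lambda)=-2ix/(m^2\lambda)+O(1)$ (valid when $m^2<\infty$): the $O(1)$ piece vanishes by Riemann--Lebesgue and the principal term contributes $(2x/(\pi m^2))\int_0^\delta \sin(\lambda y)\lambda^{-1}\,d\lambda$, which tends to $\pm x/m^2$ as $y\to\pm\infty$ by the Dirichlet integral $\int_0^\infty \sin(u)u^{-1}\,du=\pi/2$. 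When $m^2=\infty$ the principal singular term is absent and the limit is $0$, matching the convention $x/m^2=0$.

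The main obstacle is producing a uniform integrable majorant to justify the dominated-convergence step in (i): one must control $|1/\Psi(\mu/|x|)|$ simultaneously near $\mu=0$ (where the quadratic expansion of $\Psi$ is essential) and for large $\mu$ (where condition \textbf{(A)} is available only at the shifting scale $\mu/|x|$). The splitting at a threshold $\epsilon|x|$ that grows linearly with $|x|$ is what circumvents this difficulty by converting the tail estimate into the fixed, integrable function $1/|\Psi|$ on $[\epsilon,\infty)$.
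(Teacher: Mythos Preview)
The paper does not supply its own proof of this proposition; it is quoted as Theorem~1.2 of Takeda--Yano \cite{TY} and used as a black box, so there is no in-paper argument to compare against.

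Your Fourier-analytic outline is natural, but as written the dominated-convergence step in part (i) has a genuine gap. The majorant you propose, $\min(2,|\mu|)/(c\mu^2)$, behaves like $1/(c\mu)$ near $\mu=0$ and is therefore not integrable. The crude bound $|1-e^{i\mu}|\le|\mu|$ combined with the quadratic lower estimate $|x|^{2}|\Psi(\mu/|x|)|\ge c\mu^{2}$ can only ever produce an $O(1/\mu)$ bound on the \emph{modulus} of the integrand. To obtain an integrable majorant for the real part one must split it as
\[
\frac{(1-\cos\mu)\,\Re\Psi-\sin\mu\,\Im\Psi}{|\Psi|^{2}};
\]
the first summand is uniformly bounded near $0$, but the second is of order $|x|^{2}\,|\Im\Psi(\mu/|x|)|/\mu^{3}$, and controlling it requires the sharper information $\Im\Psi(\lambda)=o(\lambda^{2})$ (which does follow from zero mean and $m^{2}<\infty$) together with an $\varepsilon$-argument---a single dominating function will not do. The same difficulty recurs in part (ii): after subtracting the principal term $-2ix/(m^{2}\lambda)$ the remainder is only $o(1/\lambda)$, not $O(1)$, unless one assumes a third moment; the extra $o(1/\lambda)$ comes from $(\Psi(\lambda)-\tfrac{m^{2}}{2}\lambda^{2})/\lambda^{3}$ and is not in $L^{1}(0,\delta)$ in general, so a direct appeal to Riemann--Lebesgue for that piece is not justified. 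Both issues are repairable, but they require a more careful treatment of the imaginary part of $\Psi$ than your sketch provides.
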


\begin{prop}[Theorem 15.2 of \cite{Tukada}]
\label{b30}
For $t\ge 0$, it hold that
\begin{align}
\label{b31}
h_q(X_t)\to h(X_t)\ \mathrm{in}\ L^1(\mathbb{P}_x)
\end{align}
as $q\to 0+.$
\end{prop}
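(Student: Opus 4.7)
The plan is to deduce $L^1(\mathbb{P}_x)$-convergence from a.s.\ pointwise convergence via Scheff\'e's lemma. The pointwise part is immediate: Proposition~\ref{b28} gives $h_q(y)\to h(y)$ for every $y\in\R$, whence $h_q(X_t)\to h(X_t)$ $\mathbb{P}_x$-a.s. Since $h_q,h\ge 0$, Scheff\'e reduces the problem to showing convergence of the means $\mathbb{P}_x[h_q(X_t)]\to\mathbb{P}_x[h(X_t)]$ with finite limit.

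To compute the means, I would write $h_q(X_t)=r_q(0)-r_q(-X_t)$ and evaluate $\mathbb{P}_x[r_q(-X_t)]$ by combining (\ref{b15}), read as $r_q(-X_t)=\mathbb{P}_{X_t}[\int_0^\infty e^{-qs}dL_s^0]$, with the simple Markov property at time $t$. This gives
\begin{align*}
\mathbb{P}_x[r_q(-X_t)] = e^{qt}\Bigl(r_q(-x)-\mathbb{P}_x\Bigl[\int_0^t e^{-qs}\,dL_s^0\Bigr]\Bigr),
\end{align*}
and hence, upon substituting $r_q(-x)=r_q(0)-h_q(x)$,
\begin{align*}
\mathbb{P}_x[h_q(X_t)] = r_q(0)(1-e^{qt})+e^{qt}\,h_q(x)+e^{qt}\,\mathbb{P}_x\Bigl[\int_0^t e^{-qs}\,dL_s^0\Bigr].
\end{align*}

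Letting $q\to 0+$: the second summand tends to $h(x)$; the third tends to $\mathbb{P}_x[L_t^0]$ by monotone convergence, with $\mathbb{P}_x[L_t^0]<\infty$ guaranteed by (\ref{b15}); and for the first, rewriting (\ref{b21}) as $qr_q(0)=1/\int_0^\infty e^{-qu}n^0(T_0>u)\,du$ and applying monotone convergence again yields $qr_q(0)\to 1/n^0[T_0]\in[0,\infty)$ (with $1/\infty:=0$ in the null recurrent case), so $r_q(0)(1-e^{qt})\to -t/n^0[T_0]$. In summary, $\lim_{q\to 0+}\mathbb{P}_x[h_q(X_t)]=h(x)+\mathbb{P}_x[L_t^0]-t/n^0[T_0]$.

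It remains -- and this is where I expect the main obstacle to lie -- to identify this limit with $\mathbb{P}_x[h(X_t)]$, equivalently, to establish the Tanaka-type martingale identity that $(h(X_t)-L_t^0+t/n^0[T_0])_{t\ge 0}$ has constant mean $h(x)$ under $\mathbb{P}_x$. I would prove this by writing $\mathbb{P}_x[h(X_t)]-h(x)$ as a limit (in $q$) of the resolvent-type quantities already computed, or by a direct excursion-theoretic argument: applying the compensation formula (\ref{b24}) to the decomposition of the path into excursions away from $0$ and matching against the local-time/drift compensator via the characterization (\ref{b21}) of $\kappa$. Once this identification is in place, Scheff\'e's lemma delivers the desired $L^1(\mathbb{P}_x)$-convergence.
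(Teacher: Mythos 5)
This proposition is not proved in the paper at all: it is imported verbatim from Theorem 15.2 of Tsukada \cite{Tukada}, so there is no internal proof to compare against, and your attempt has to be judged as a proof from scratch. Up to the last step it is correct: the pointwise convergence from Proposition \ref{b28}, the reduction via Scheff\'e, and the computation
$\mathbb{P}_x[h_q(X_t)]=r_q(0)(1-e^{qt})+e^{qt}h_q(x)+e^{qt}\mathbb{P}_x[\int_0^te^{-qs}dL_s^0]$
via (\ref{b15}) and the Markov property are all fine, as is the limit $qr_q(0)\to 1/n^0[T_0]$ from (\ref{b21}).

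The gap is the step you yourself flag: identifying $\lim_{q\to0+}\mathbb{P}_x[h_q(X_t)]$ with $\mathbb{P}_x[h(X_t)]$. Given Scheff\'e and the a.s.\ convergence, this identification is \emph{equivalent} to the statement being proved, so nothing has been reduced; and the identity $\mathbb{P}_x[h(X_t)]=h(x)+\mathbb{P}_x[L_t^0]-t/n^0[T_0]$ is precisely Tsukada's Tanaka formula, i.e.\ the main theorem of the cited paper. Your first suggested route (writing $\mathbb{P}_x[h(X_t)]-h(x)$ as a limit of the resolvent quantities already computed) is circular — that limit interchange is exactly what is in question, and $h_q$ is not monotone in $q$, so no one-line convergence theorem applies. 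Your second route (compensation formula along excursions) is a genuine but substantial program that you have not carried out. A more economical way to close the argument is a uniform domination: from (\ref{b33}) one gets $h_q(-a)=h_q^B(a)-h_q(a)r_q(a)/r_q(0)\le h_q^B(a)\le h^B(a)$, hence $0\le h_q(y)\le h^B(y)$ for all $q>0$; dominated convergence then yields (\ref{b31}) directly (no Scheff\'e needed) once one knows $\mathbb{P}_x[h^B(X_t)]=\mathbb{P}_x[h(X_t)+h(-X_t)]<\infty$. Your mean computation gives $\sup_q\mathbb{P}_x[h_q(X_t)]<\infty$ and hence, by Fatou, half of this; the other half requires the dual computation for $\mathbb{P}_x[h_q(-X_t)]=r_q(0)-\mathbb{P}_x[r_q(X_t)]$, which is the integrability fact this paper later quotes at (\ref{d51}). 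Without one of these ingredients the proof is incomplete.
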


%%%%%%%%%%%%%%%%%%%%%%%%%%%%%%%%%%%%%%%%%%%%%%%%%%%%%%%%%%%%%%%%%%%%%%%%%%%%%%%%%%%%%%%%%%%%%%%%%%%%%%%%%%%%%%%%%%%%%%%%%%%%%%%%%%%%%%%%%%%%%%%%%%%%%%%%%%%%%%%%%%%%%%%%%%%%%%%%%%%%%%%%%%%%%

\subsection{The function $h^B$}
We will introduce the functions $h_q^B$ and $h^B.$
\begin{prop}[Lemma 3.5 of \cite{TY}]
\label{b32}
The following assertions hold:
\begin{enumerate}[(i)]
\item For $a\in \R$, it holds that
\begin{align}
\label{b33}
h_q^B(a):=\mathbb{P}_0\left[\int_0^{T_a}e^{-qt}dL_t^0\right]=h_q(a)+h_q(-a)-\frac{h_q(a)h_q(-a)}{r_q(0)}.
\end{align}
Consequently, it holds that
\begin{align}
\label{b34}
h^B(a):=\lim_{q\to 0+}h_q^B(a)=\mathbb{P}_0[L_{T_a}]=h(a)+h(-a);
\end{align}
\item For $x\in \R$, $q>0$, and distinct points $a,b\in \R,$ it holds that
\begin{align}
\label{b35}
\mathbb{P}_x\left[e^{-qT_a};\ T_a<T_b\right]=\frac{h_q(b-a)+h_q(x-b)-h_q(x-a)-\frac{h_q(x-b)h_q(b-a)}{r_q(0)}}{h_q^B(a-b)}.
\end{align}
Consequently, it holds that
\begin{align}
\label{b36}
\mathbb{P}_x(T_a<T_b)=\frac{h(b-a)+h(x-b)-h(x-a)}{h^B(a-b)}.
\end{align}
\end{enumerate}
\end{prop}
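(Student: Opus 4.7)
The plan is to prove the two parts essentially by the strong Markov property combined with the already-available identity $\mathbb{P}_x[e^{-qT_0}]=r_q(-x)/r_q(0)$ in (\ref{b9}), followed by a bookkeeping step that converts everything from the resolvent density $r_q$ to the renormalization $h_q$, and finally a passage to the limit $q\to 0+$ using recurrence.

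For part (i), I would start from the total mass identity $\mathbb{P}_0\!\left[\int_0^\infty e^{-qt}dL_t^0\right]=r_q(0)$ and split the time interval at $T_a$. Applying the strong Markov property at $T_a$ and using (\ref{b15}) (which gives $\mathbb{P}_a\!\left[\int_0^\infty e^{-qt}dL_t^0\right]=r_q(-a)$) together with the translation-invariant form of (\ref{b9}) (yielding $\mathbb{P}_0[e^{-qT_a}]=r_q(a)/r_q(0)$), I obtain
\begin{align*}
r_q(0)=h_q^B(a)+\frac{r_q(a)r_q(-a)}{r_q(0)}.
\end{align*}
Rewriting $r_q(\pm a)=r_q(0)-h_q(\mp a)$ via the definition (\ref{b27}) and expanding the product gives exactly $h_q^B(a)=h_q(a)+h_q(-a)-h_q(a)h_q(-a)/r_q(0)$. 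The consequence then follows by monotone convergence on the left (since $e^{-qt}\uparrow 1$), by Proposition \ref{b28}(i) on the right, and by recurrence (\ref{b22}) which gives $1/r_q(0)\to\kappa=0$.

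For part (ii), I would use the two-equation technique. By conditioning on whether $T_a$ or $T_b$ comes first and applying the strong Markov property, we have
\begin{align*}
\mathbb{P}_x\!\left[e^{-qT_a}\right]&=\mathbb{P}_x\!\left[e^{-qT_a};T_a<T_b\right]+\mathbb{P}_x\!\left[e^{-qT_b};T_b<T_a\right]\mathbb{P}_b\!\left[e^{-qT_a}\right],\\
\mathbb{P}_x\!\left[e^{-qT_b}\right]&=\mathbb{P}_x\!\left[e^{-qT_b};T_b<T_a\right]+\mathbb{P}_x\!\left[e^{-qT_a};T_a<T_b\right]\mathbb{P}_a\!\left[e^{-qT_b}\right].
\end{align*}
This is a $2\times 2$ linear system in the two quantities of interest. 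Solving by Cramer's rule (or direct substitution) and expressing each hitting-time Laplace transform through (\ref{b9}) as a ratio of $r_q$-values yields
\begin{align*}
\mathbb{P}_x\!\left[e^{-qT_a};T_a<T_b\right]=\frac{r_q(0)r_q(a-x)-r_q(a-b)r_q(b-x)}{r_q(0)^2-r_q(a-b)r_q(b-a)}.
\end{align*}
I would then substitute $r_q(y)=r_q(0)-h_q(-y)$ uniformly in the numerator and denominator and expand: the numerator collapses to $r_q(0)\{h_q(b-a)+h_q(x-b)-h_q(x-a)-h_q(b-a)h_q(x-b)/r_q(0)\}$ and the denominator collapses to $r_q(0)\cdot h_q^B(a-b)$ using part (i). Dividing gives the stated formula. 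The second assertion of (ii) then follows by letting $q\to 0+$, again using Proposition \ref{b28}(i) and $1/r_q(0)\to 0$, together with the fact that $\{T_a<T_b\}\subset\{T_a<\infty\}$ has full measure under recurrence.

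The only delicate step is the $q\to 0+$ passage for $\mathbb{P}_x(T_a<T_b)$ itself: one must justify that $\mathbb{P}_x[e^{-qT_a};T_a<T_b]\to\mathbb{P}_x(T_a<T_b)$. This is handled by dominated convergence once recurrence guarantees $T_a\wedge T_b<\infty$ a.s., so $e^{-qT_a}1_{\{T_a<T_b\}}\uparrow 1_{\{T_a<T_b\}}$. The rest is pure algebra, and the main potential obstacle is simply bookkeeping: keeping careful track of signs in $h_q(x)$ versus $h_q(-x)$ when translating back and forth between $r_q$ and $h_q$, and correctly identifying the resulting denominator with $h_q^B(a-b)$.
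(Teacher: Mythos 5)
Your argument is correct: the first-passage decomposition of $\int_0^\infty e^{-qt}\,dL_t^0$ at $T_a$ for part (i), and the two-equation strong Markov system combined with (\ref{b9}) for part (ii), both check out, and the algebra converting $r_q$ to $h_q$ (via $r_q(y)=r_q(0)-h_q(-y)$) and the $q\to 0+$ passage using $1/r_q(0)\to\kappa=0$ are handled properly. The paper itself gives no proof of this proposition — it is quoted verbatim from Lemma 3.5 of Takeda--Yano \cite{TY} — and your derivation is essentially the standard one given there, so there is nothing further to compare.
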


\begin{prop}[Lemma 3.7 of \cite{TY}]
\label{b37}
It holds that 
\begin{align}
\label{b38}
\lim_{x\to \infty}h^B(x)=\infty.
\end{align}
\end{prop}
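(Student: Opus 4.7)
The plan is to exploit the probabilistic identity $h^B(a)=\P_0[L_{T_a}^0]$ recorded in \eqref{b34}, reduce the claim to showing $L_{T_a}^0\to\infty$ on a set of full $\P_0$-measure as $a\to\infty$, and then conclude by Fatou's lemma. The advantage of this route, compared with attacking $h(a)+h(-a)$ directly through the linear asymptotics in Proposition \ref{b29}, is that the latter forces divergence only when $m^2<\infty$, whereas the probabilistic argument handles both the finite and infinite mean-square cases uniformly.

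For the almost sure divergence $L_{T_a}^0\to\infty$, two ingredients are needed. First, $T_a\to\infty$ $\P_0$-almost surely as $a\to\infty$: since $X$ has \cadlag\ paths, $\sup_{s\le T}|X_s|<\infty$ almost surely for every fixed $T>0$, so for each $\omega$ and each $T$ one has $T_a(\omega)>T$ for all sufficiently large $a$. Second, $L_\infty^0=\infty$ $\P_0$-almost surely: recurrence is equivalent to $\kappa=0$ by \eqref{b22}, so \eqref{b17} implies that $\eta^0$ is a non-killed subordinator, giving $\eta_l^0<\infty$ a.s.\ for every finite $l$; equivalently, $L^0$ attains every finite level, that is, $L_\infty^0=\infty$. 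Combining these two facts with the continuity and monotonicity of $t\mapsto L_t^0$ yields $L_{T_a}^0\to L_\infty^0=\infty$ almost surely.

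Once the almost sure divergence is in hand, Fatou's lemma delivers
\begin{align*}
\liminf_{a\to\infty}h^B(a)=\liminf_{a\to\infty}\P_0[L_{T_a}^0]\ge \P_0\bigl[\liminf_{a\to\infty}L_{T_a}^0\bigr]=\infty,
\end{align*}
which completes the proof. I do not anticipate any genuine obstacle; the only minor subtlety is that $a\mapsto T_a$ is in general \emph{not} monotone for a \Levy\ process that overshoots, so monotone convergence is unavailable, but Fatou suffices thanks to the non-negativity of $L^0$.
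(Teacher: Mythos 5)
Your proof is correct. Note that the paper itself does not prove this statement; it imports it as Lemma 3.7 of \cite{TY}, so you are supplying a self-contained argument rather than reproducing one. Each step checks out: boundedness of \cadlag\ paths on compacts gives $T_a>T$ eventually, hence $T_a\to\infty$ a.s.; recurrence ($\kappa=0$ in \eqref{b22}) combined with \eqref{b17} gives $\P_0(\eta_l^0<\infty)=e^{-l\kappa}=1$, hence $L^0_\infty=\infty$ a.s.; monotonicity of $t\mapsto L_t^0$ then yields $L_{T_a}^0\to\infty$ a.s., and Fatou (along any sequence realizing the liminf) finishes. Your remark that $a\mapsto T_a$ is not monotone, so Fatou rather than monotone convergence is the right tool, is exactly the point one could otherwise trip over. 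You are also right that arguing through $h^B(x)=h(x)+h(-x)$ and the linear asymptotics \eqref{b29-0} would only settle the case $m^2<\infty$. For comparison, the other natural self-contained route uses the excursion identity \eqref{b40}: $n^0(T_a<T_0)=1/h^B(a)$, and since $\kappa=n^0(T_0=\infty)=0$, $n^0$-a.e.\ excursion returns to $0$ in finite time and is therefore bounded, so $n^0(T_a<T_0)\downarrow 0$ as $a\to\infty$ (the sets are of finite $n^0$-measure, so downward continuity applies). That argument is essentially dual to yours — it measures the excursions that reach level $a$, where you measure the local time accumulated before reaching $a$ — and both handle $m^2=\infty$ uniformly.
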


\begin{prop}[Lemma 3.8 of \cite{TY}]
\label{b39}
For $a\in \R\setminus \{0\}$, it holds that 
\begin{align}
\label{b40}
 n^0(T_a<T_0)=\frac{1}{h^B(a)}.
\end{align}
\end{prop}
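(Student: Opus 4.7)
The plan is to identify $L^0_{T_a}$ with the first local time at which an excursion away from $0$ reaches $a$, and then to compute its expectation from the Poisson point process structure of $(\epsilon_l^0)_{l\ge 0}$.

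First I would set $A:=\{e\in\mathcal{D}:T_a(e)<\zeta(e)\}$, the set of excursions from $0$ that reach $a$ before being killed. Since $(\epsilon_l^0)_{l\ge 0}$ is a Poisson point process with characteristic measure $n^0$, a standard thinning argument says that the marks falling in $A$ form a homogeneous Poisson process on $(0,\infty)$ of rate $n^0(A)=n^0(T_a<T_0)$, provided $n^0(A)\in(0,\infty)$. Hence
\begin{align*}
\tau:=\inf\{l>0:\epsilon_l^0\in A\}
\end{align*}
is exponentially distributed with parameter $n^0(T_a<T_0)$, so that $\mathbb{P}_0[\tau]=1/n^0(T_a<T_0)$.

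Next I would verify the path-wise identity $L^0_{T_a}=\tau$. By construction $\epsilon^0_\tau$ is the first excursion that hits $a$, so $T_a\in(\eta^0_{\tau-},\eta^0_\tau]$. Because $L^0$ is continuous and remains constant on every excursion interval, with value $\tau$ on $(\eta^0_{\tau-},\eta^0_\tau]$, this forces $L^0_{T_a}=\tau$. Taking expectations and invoking $h^B(a)=\mathbb{P}_0[L^0_{T_a}]$ from (\ref{b34}) gives $h^B(a)=1/n^0(T_a<T_0)$, which rearranges to the claim.

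The only technical point is verifying $0<n^0(A)<\infty$, which legitimises the first-passage argument. Positivity follows from recurrence, since $T_a<\infty$ almost surely forces at least one excursion to belong to $A$. Finiteness follows from $h^B(a)<\infty$, which is guaranteed by Proposition \ref{b28}(i) together with (\ref{b34}): if $n^0(A)$ were infinite, then $\tau=0$ almost surely, contradicting $\mathbb{P}_0[\tau]=h^B(a)<\infty$. The main conceptual step is therefore the clean identification $L^0_{T_a}=\tau$, which is standard in It\^{o} excursion theory but on which the entire computation rests.
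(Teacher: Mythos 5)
Your proof is correct and is essentially the standard argument behind Lemma 3.8 of \cite{TY}, which this paper cites without reproving: $L^0_{T_a}$ is the first arrival local time of the thinned Poisson point process of excursions that hit $a$, hence exponentially distributed with rate $n^0(T_a<T_0)$, and its mean equals $h^B(a)$ by (\ref{b34}) (the same exponential law of $\sigma_A^a$ is invoked in the proof of Lemma \ref{c5}). One small quibble: in your finiteness check, $n^0(A)=\infty$ would force $\tau=0$ a.s., and the contradiction is with $h^B(a)=\mathbb{P}_0[L^0_{T_a}]>0$ --- which holds because $0$ is regular for itself, so $L^0_t>0$ for all $t>0$ while $T_a>0$ a.s. --- rather than with $h^B(a)<\infty$.
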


%%%%%%%%%%%%%%%%%%%%%%%%%%%%%%%%%%%%%%%%%%%%%%%%%%%%%%%%%%%%%%%%%%%%%%%%%%%%%%%%%%%%%%%%%%%%%%%%%%%%%%%%%%%%%%%%%%%%%%%%%%%%%%%%%%%%%%%%%%%%%%%%%%%%%%%%%%%%%%%%%%%%%%

\subsection{Various expectations of local time at random times}
In this subsection, we present the results of various expectations of local time clock that we will use frequently later.
\begin{prop}[Lemma 4.1 of \cite{TY}]
\label{b41}
Let $f$ be a non-negative measurable function. Then, it holds that for $q>0$ and $x\in \R$,
\begin{align}
\label{b42}
\mathbb{P}_x[f(L_{\bm{e}_q}^0)]=\frac{1}{r_q(0)}\left\{h_q(x)f(0)+\left(1-\frac{h_q(x)}{r_q(0)}\right)\int_0^\infty e^{-\frac{u}{r_q(0)}}f(u)du\right\}.
\end{align}
\end{prop}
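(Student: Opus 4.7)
The plan is to split $\mathbb{P}_x[f(L_{\bm{e}_q}^0)]$ according to whether $\bm{e}_q<T_0$ or $\bm{e}_q\ge T_0$, use the strong Markov property at $T_0$ together with the memoryless property of $\bm{e}_q$ to reduce the second piece to the case $x=0$, and then identify the law of $L_{\bm{e}_q}^0$ under $\mathbb{P}_0$ via the inverse local time.

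First I would write
\begin{align*}
\mathbb{P}_x[f(L_{\bm{e}_q}^0)]=f(0)\,\mathbb{P}_x(\bm{e}_q<T_0)+\mathbb{P}_x[f(L_{\bm{e}_q}^0);\,\bm{e}_q\ge T_0],
\end{align*}
using that $L^0$ is continuous, vanishes at $0$, and does not grow before $X$ visits $0$, so that $L^0_{\bm{e}_q}=0$ on $\{\bm{e}_q<T_0\}$. By the independence of $\bm{e}_q$ from $X$ and identity (\ref{b9}),
\begin{align*}
\mathbb{P}_x(\bm{e}_q<T_0)=1-\mathbb{P}_x[e^{-qT_0}]=\frac{r_q(0)-r_q(-x)}{r_q(0)}=\frac{h_q(x)}{r_q(0)}.
\end{align*}
For the second term I would apply the strong Markov property at $T_0$, which, together with $X_{T_0}=0$, $L^0_{T_0}=0$, and the change of variables $s=t-T_0$ in the integral $\int_0^\infty qe^{-qt}\mathbb{P}_x[\,\cdot\,;\,t\ge T_0]\,dt$ (this is where the lack-of-memory of $\bm{e}_q$ enters), gives
\begin{align*}
\mathbb{P}_x[f(L_{\bm{e}_q}^0);\,\bm{e}_q\ge T_0]=\mathbb{P}_x[e^{-qT_0}]\cdot\mathbb{P}_0[f(L_{\bm{e}_q}^0)]=\left(1-\frac{h_q(x)}{r_q(0)}\right)\mathbb{P}_0[f(L_{\bm{e}_q}^0)].
\end{align*}

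It would then remain to compute $\mathbb{P}_0[f(L_{\bm{e}_q}^0)]$. For this I would use the identity $\{L_t^0>u\}=\{\eta_u^0<t\}$ together with the Laplace transform (\ref{b17}) of the inverse local time, which yields
\begin{align*}
\mathbb{P}_0(L_{\bm{e}_q}^0>u)=\mathbb{P}_0(\eta_u^0<\bm{e}_q)=\mathbb{P}_0[e^{-q\eta_u^0}]=e^{-u/r_q(0)}.
\end{align*}
Since $0$ is regular for itself and $\bm{e}_q>0$ almost surely, $L_{\bm{e}_q}^0$ has no atom at $0$ under $\mathbb{P}_0$, so
\begin{align*}
\mathbb{P}_0[f(L_{\bm{e}_q}^0)]=\frac{1}{r_q(0)}\int_0^\infty e^{-u/r_q(0)}f(u)\,du.
\end{align*}
Substituting this back into the decomposition of the first paragraph and factoring out $1/r_q(0)$ produces exactly the stated formula. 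The only delicate point is the combination of strong Markov and memorylessness in the second term, but this is a standard manipulation; the remaining steps are direct appeals to (\ref{b9}) and (\ref{b17}) from the preliminaries.
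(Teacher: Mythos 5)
Your proof is correct; the paper itself gives no argument for this proposition (it simply imports Lemma 4.1 of Takeda--Yano \cite{TY}), and your decomposition at $T_0$ combined with the identification of $L^0_{\bm{e}_q}$ under $\mathbb{P}_0$ as an exponential variable of mean $r_q(0)$ via (\ref{b9}) and (\ref{b17}) is exactly the standard route taken in that reference. No gaps: you correctly handle the memorylessness via the change of variables $s=t-T_0$ and the absence of an atom at $0$ under $\mathbb{P}_0$.
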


\begin{prop}[Lemma 5.1 of \cite{TY}]
\label{b43}
For $x\in \R,\ a\in \R\setminus \{0\}$, and non-negative measurable function $f$, we have
\begin{align}
\label{b44}
\mathbb{P}_x\left[f(L_{T_a}^0)\right]=\mathbb{P}_x(T_0>T_a)f(0)+\frac{\mathbb{P}_x(T_0<T_a)}{h^B(a)}\int_0^\infty e^{-\frac{u}{h^B(a)}}f(u)du.
\end{align}
\end{prop}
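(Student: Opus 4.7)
The plan is to split according to whether the process hits $a$ or $0$ first. Since $X$ is recurrent and $0$ is regular for itself (with the convention $T_0=0$ under $\mathbb{P}_0$ and $T_a=0$ under $\mathbb{P}_a$), the events $\{T_a<T_0\}$ and $\{T_0<T_a\}$ partition $\Omega$ up to a $\mathbb{P}_x$-null set. I decompose
\begin{align*}
\mathbb{P}_x[f(L_{T_a}^0)] = \mathbb{P}_x[f(L_{T_a}^0);\, T_a<T_0] + \mathbb{P}_x[f(L_{T_a}^0);\, T_0<T_a].
\end{align*}

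On $\{T_a<T_0\}$ no local time at $0$ has been accumulated, so $L_{T_a}^0=0$ and the first term equals $f(0)\,\mathbb{P}_x(T_a<T_0)$. For the second term, I apply the strong Markov property at $T_0$. Since $L_{T_0}^0=0$ and on $\{T_0<T_a\}$ we have $T_a\circ\theta_{T_0}=T_a$, I obtain
\begin{align*}
\mathbb{P}_x[f(L_{T_a}^0);\, T_0<T_a] = \mathbb{P}_x(T_0<T_a)\cdot\mathbb{P}_0[f(L_{T_a}^0)].
\end{align*}

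The remaining step is to compute $\mathbb{P}_0[f(L_{T_a}^0)]$ via excursion theory. Under $\mathbb{P}_0$, the family $(\epsilon_l^0)_{l\ge 0}$ of excursions from $0$ is a Poisson point process with characteristic measure $n^0$, and $L_{T_a}^0$ coincides with the first local-time level $l\ge 0$ at which the excursion $\epsilon_l^0$ reaches $a$ before returning to $0$. By the thinning property of Poisson point processes and Proposition \ref{b39}, this first arrival time is exponentially distributed with rate $n^0(T_a<T_0)=1/h^B(a)$. Consequently,
\begin{align*}
\mathbb{P}_0[f(L_{T_a}^0)]=\frac{1}{h^B(a)}\int_0^\infty e^{-u/h^B(a)}f(u)\,du,
\end{align*}
and substituting back produces the claimed identity.

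The main obstacle is the excursion-theoretic identification of $L_{T_a}^0$ as an exponential random variable. The argument must track that the selected excursion (the first one that reaches $a$) contributes only to $T_a$ and not to $L_{T_a}^0$, so that $L_{T_a}^0$ equals exactly the local-time level of the Poisson mark, justifying the exponential law with rate $1/h^B(a)$ via Proposition \ref{b39}.
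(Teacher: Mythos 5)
Your argument is correct and is the standard proof of this identity (the paper itself imports it from Takeda--Yano without proof): split at $T_0$, use the strong Markov property, and identify $L_{T_a}^0$ under $\mathbb{P}_0$ as the first local-time level at which an excursion reaches $a$, hence exponential with rate $n^0(T_a<T_0)=1/h^B(a)$ by Proposition \ref{b39} and the thinning property. The only slip is notational: on $\{T_0<T_a\}$ one has $T_a=T_0+T_a\circ\theta_{T_0}$ rather than $T_a\circ\theta_{T_0}=T_a$, but the identity you actually need, $L_{T_a}^0=L_{T_a}^0\circ\theta_{T_0}$ on that event, is correct since no local time at $0$ accrues before $T_0$.
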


\begin{prop}[Lemma 6.1 of \cite{TY}]
\label{b45}
For distinct points $a,b\in \R$, it holds that
\begin{align*}
\label{b46}
h^C(a,b):&=\mathbb{P}_0[L_{T_a\wedge T_b}^0]\\
&=\frac{1}{h^B(a-b)}\left\{\begin{aligned}
&(h(b)+h(-a))h(a-b)+(h(a)+h(-b))h(b-a)\\
&\qquad +(h(a)-h(b))(h(-b)-h(-a))-h(a-b)h(b-a)
\end{aligned}\right\}.
\stepcounter{equation}\tag{\theequation}
\end{align*}
\end{prop}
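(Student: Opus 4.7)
The plan is to obtain $h^C(a,b)=\mathbb{P}_0[L^0_{T_a\wedge T_b}]$ by comparing it with $\mathbb{P}_0[L^0_{T_a}]=h^B(a)$, which we already know from (\ref{b34}). Since $L^0$ is continuous and $T_a\wedge T_b\le T_a$, we can write
\begin{align*}
L^0_{T_a}=L^0_{T_a\wedge T_b}+\bigl(L^0_{T_a}-L^0_{T_a\wedge T_b}\bigr),
\end{align*}
where the second summand vanishes on $\{T_a<T_b\}$. Applying the strong Markov property at $T_b$ on the event $\{T_b<T_a\}$ yields
\begin{align*}
h^B(a)=h^C(a,b)+\mathbb{P}_0(T_b<T_a)\,\mathbb{P}_b[L^0_{T_a}].
\end{align*}

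The next step is to evaluate the two unknown factors using results already in Section 2. For $\mathbb{P}_b[L^0_{T_a}]$, I would use Proposition \ref{b43} with $x=b$ and $f(u)=u$; since $\int_0^\infty u e^{-u/h^B(a)}\,du=h^B(a)^2$, this immediately gives
\begin{align*}
\mathbb{P}_b[L^0_{T_a}]=\mathbb{P}_b(T_0<T_a)\,h^B(a).
\end{align*}
For the hitting probabilities, I would apply Proposition \ref{b32}(ii) to get
\begin{align*}
\mathbb{P}_0(T_b<T_a)=\frac{h(a-b)+h(-a)-h(-b)}{h^B(a-b)},\qquad
\mathbb{P}_b(T_0<T_a)=\frac{h(a)+h(b-a)-h(b)}{h^B(a)}.
\end{align*}
Substituting back and using $h^B(a)=h(a)+h(-a)$ gives
\begin{align*}
h^C(a,b)=\frac{(h(a)+h(-a))\bigl(h(a-b)+h(b-a)\bigr)-\bigl(h(a-b)+h(-a)-h(-b)\bigr)\bigl(h(a)+h(b-a)-h(b)\bigr)}{h^B(a-b)}.
\end{align*}

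What remains is purely algebraic: expanding the numerator and verifying it equals the expression stated in the proposition. The diagonal terms $h(a)h(a-b)$ and $h(-a)h(b-a)$ cancel, leaving the two explicit terms $(h(b)+h(-a))h(a-b)$ and $(h(a)+h(-b))h(b-a)$, the cross term $-h(a-b)h(b-a)$, and the residual
\begin{align*}
-h(-a)h(a)+h(-a)h(b)+h(-b)h(a)-h(-b)h(b)=(h(a)-h(b))(h(-b)-h(-a)),
\end{align*}
which matches the stated formula. The main point of care is simply the bookkeeping in this expansion; the probabilistic content is exhausted by the single Markov decomposition above, so the only real obstacle is doing the algebra cleanly without losing a sign.
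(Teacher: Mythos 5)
Your argument is correct. Note that the paper itself does not prove this proposition: it is quoted verbatim as Lemma 6.1 of \cite{TY}, so there is no in-paper proof to compare against. Your derivation is a legitimate self-contained route using only the other quoted preliminaries. The first-passage decomposition
\begin{align*}
h^B(a)=\mathbb{P}_0[L^0_{T_a}]=\mathbb{P}_0[L^0_{T_a\wedge T_b}]+\mathbb{P}_0(T_b<T_a)\,\mathbb{P}_b[L^0_{T_a}]
\end{align*}
is justified by the additivity of local time ($L^0_{T_a}-L^0_{T_b}=L^0_{T_a}\circ\theta_{T_b}$ on $\{T_b<T_a\}$, an event in $\F_{T_b}$) together with the strong Markov property, and all three ingredients you feed into it check out: Proposition \ref{b43} with $f(u)=u$ indeed gives $\mathbb{P}_b[L^0_{T_a}]=\mathbb{P}_b(T_0<T_a)h^B(a)$, and the two hitting probabilities you read off from (\ref{b36}) are correct (using $h^B(-a)=h^B(a)$ and $h^B(b-a)=h^B(a-b)$). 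I verified the final expansion: with the shorthand $A=h(a)$, $A'=h(-a)$, $B=h(b)$, $B'=h(-b)$, $C=h(a-b)$, $C'=h(b-a)$, your numerator $(A+A')(C+C')-(C+A'-B')(A+C'-B)$ reduces exactly to $(B+A')C+(A+B')C'+(A-B)(B'-A')-CC'$, which is the stated formula. One point worth making explicit if you write this up: recurrence plus condition \textbf{(A)} guarantee $T_a\wedge T_b<\infty$ a.s.\ and that all quantities involved are finite, so the subtraction defining $h^C(a,b)=h^B(a)-\mathbb{P}_0(T_b<T_a)\mathbb{P}_b[L^0_{T_a}]$ is legitimate.
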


\begin{prop}[Lemma 6.2 of \cite{TY}]
\label{b47}
For $x\in \R$ and distinct points $a,b,c\in \R$, it hold that
\begin{align*}
\label{b48}
&\mathbb{P}_x\left[e^{-qT_a};\ T_a<T_b\wedge T_c\right]\\
&=\frac{\mathbb{P}_x\left[e^{-qT_a};\ T_a<T_b\right]-\mathbb{P}_x\left[e^{-qT_c};\ T_c<T_b\right]-\mathbb{P}_c\left[e^{-qT_a};\ T_a<T_b\right]}{1-\mathbb{P}_a\left[e^{-qT_c};\ T_c<T_b\right]\mathbb{P}_c\left[e^{-qT_a};\ T_a<T_b\right]}.
\stepcounter{equation}\tag{\theequation}
\end{align*}
Consequently, it holds that
\begin{align}
\label{b49}
\mathbb{P}_x(T_a<T_b\wedge T_c)=\frac{\mathbb{P}_x(T_a<T_b)-\mathbb{P}_x(T_c<T_b)\mathbb{P}_c(T_a<T_b)}{1-\mathbb{P}_a(T_c<T_b)\mathbb{P}_c(T_a<T_b)}.
\end{align}
\end{prop}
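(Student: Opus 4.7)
The plan is to derive a $2\times 2$ linear system in the two unknowns
\[
A:=\mathbb{P}_x\left[e^{-qT_a};\ T_a<T_b\wedge T_c\right]\qquad\text{and}\qquad C:=\mathbb{P}_x\left[e^{-qT_c};\ T_c<T_b\wedge T_a\right],
\]
obtain the stated formula by solving for $A$, and deduce the consequence \eqref{b49} by letting $q\to 0+$. (I read the numerator as $\mathbb{P}_x[e^{-qT_a};T_a<T_b]-\mathbb{P}_x[e^{-qT_c};T_c<T_b]\cdot \mathbb{P}_c[e^{-qT_a};T_a<T_b]$, i.e.\ the middle minus sign is a typo for multiplication; this is forced by the form of the denominator.)

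The first step is to partition $\{T_a<T_b\}$ according to whether $T_c$ occurs before $T_a$ or not, giving
\[
\mathbb{P}_x\left[e^{-qT_a};\ T_a<T_b\right]=A+\mathbb{P}_x\left[e^{-qT_a};\ T_c<T_a<T_b\right].
\]
On the event $\{T_c<T_a<T_b\}$ the strong Markov property at $T_c$ (together with the fact that $b,a$ are not yet hit before $T_c$) lets me factor the second term as $\mathbb{P}_x[e^{-qT_c};T_c<T_b\wedge T_a]\cdot \mathbb{P}_c[e^{-qT_a};T_a<T_b]=C\gamma$, where $\gamma:=\mathbb{P}_c[e^{-qT_a};T_a<T_b]$. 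Writing $\mathcal{A}:=\mathbb{P}_x[e^{-qT_a};T_a<T_b]$ I get the identity $\mathcal{A}=A+C\gamma$, and the symmetric argument (exchanging the roles of $a$ and $c$ and using the strong Markov property at $T_a$) produces $\mathcal{C}=C+A\alpha$ with $\alpha:=\mathbb{P}_a[e^{-qT_c};T_c<T_b]$ and $\mathcal{C}:=\mathbb{P}_x[e^{-qT_c};T_c<T_b]$.

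The second step is purely algebraic: eliminate $C$ from the pair
\[
\mathcal{A}=A+C\gamma,\qquad \mathcal{C}=C+A\alpha
\]
to obtain $A(1-\alpha\gamma)=\mathcal{A}-\mathcal{C}\gamma$, which is precisely \eqref{b48}. The denominator $1-\alpha\gamma$ is strictly positive because each of $\alpha,\gamma$ is at most the hitting probability of a single point, and recurrence plus regularity prevents both excursions from $\{a,c\}$ avoiding $b$ indefinitely, so no division-by-zero issue arises. Finally, sending $q\to 0+$ and using the recurrence of $X$ (so that $e^{-qT_a}\to 1$ on $\{T_a<\infty\}$, and every $T_a,T_b,T_c$ is a.s.\ finite) yields \eqref{b49} by dominated convergence.

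The only delicate point is the measurability/filtration check when applying the strong Markov property at $T_c$ on the event $\{T_c<T_a\wedge T_b\}$: I must ensure the factor $\mathbb{P}_c[e^{-qT_a};T_a<T_b]$ really depends only on the post-$T_c$ path and that the pre-$T_c$ factor is $\mathcal{F}_{T_c}$-measurable. This is standard once one rewrites $1_{\{T_c<T_a<T_b\}}=1_{\{T_c<T_a\wedge T_b\}}\cdot 1_{\{T_a<T_b\}}\circ\theta_{T_c}$, so no real obstacle appears — the whole argument reduces to two applications of the strong Markov property plus a $2\times 2$ Cramer computation.
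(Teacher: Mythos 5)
The paper does not prove this proposition itself (it is imported verbatim from Lemma 6.2 of \cite{TY}), but your argument is correct and is exactly the standard one: the decomposition $\{T_a<T_b\}=\{T_a<T_b\wedge T_c\}\sqcup\{T_c<T_a<T_b\}$, the strong Markov property at $T_c$ (resp.\ $T_a$), and the resulting $2\times 2$ system are the same device the paper itself uses for the analogous identities (\ref{d7})--(\ref{d8}) and (\ref{e4})--(\ref{e5}). You also correctly read the middle minus sign in (\ref{b48}) as a typo for multiplication, and your justification that the denominator stays positive as $q\to 0+$ (otherwise the path would oscillate between $a$ and $c$ forever and never hit $b$, contradicting recurrence) is sound.
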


%%%%%%%%%%%%%%%%%%%%%%%%%%%%%%%%%%%%%%%%%%%%%%%%%%%%%%%%%%%%%%%%%%%%%%%%%%%%%%%%%%%%%%%%%%%%%%%%%%%%%%%%%%%%%%%%%%%%%%%%%%%%%%%%%%%%%%%%%%%%%%%%%%%%%%%%%%%%%%%%%%%%%%%%%%%%%%%%%%%%%%%%%%%%%

\section{Exponential clock}
\label{S3}
From this section to the end, we write simply
\begin{align}
\Gamma_t:=e^{-\lambda_aL_t^a-\lambda_bL_t^b}.
\end{align}
Let us find the limit of $r_q(0)\mathbb{P}_{x}\left[\Gamma_{\bm{e}_q}\right]$ as $q\to 0+.$

\begin{prop}
\label{c40}
For distinct points $a,b\in \R$ and for constants $\lambda_a,\lambda_b>0$ it holds that
\begin{align}
\label{c42}
\lim_{q\to 0+}r_q(0)\mathbb{P}_x\left[\Gamma_{\bm{e}_q}\right]=\varphi_{a,b}^{(0),\lambda_a,\lambda_b}(x)
\end{align}
for all $x\in \R$, and 
\begin{align}
\label{c42-1}
\lim_{q\to 0+}r_q(0)\mathbb{P}_{X_t}\left[\Gamma_{\bm{e}_q}\right]=\varphi_{a,b}^{(0),\lambda_a,\lambda_b}(X_t)\ \mathrm{a.s.\ and\ in\ } L^1(\mathbb{P}_x).
\end{align}
\end{prop}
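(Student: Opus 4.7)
The plan is to compute $\mathbb{P}_x[\Gamma_{\bm{e}_q}]$ in closed form, multiply by $r_q(0)$, and take $q\to 0+$ using the renormalized zero resolvent machinery of Section~\ref{S2}; the limit will then be matched term-by-term with the explicit formula for $\varphi_{a,b}^{(0),\lambda_a,\lambda_b}(x)$ stated in the introduction. First, the strong Markov property at $T_a\wedge T_b$, combined with the observation that $L_{T_a\wedge T_b}^a=L_{T_a\wedge T_b}^b=0$ (so $\Gamma_{T_a\wedge T_b}=1$), yields
\begin{align*}
\mathbb{P}_x[\Gamma_{\bm{e}_q}]=\mathbb{P}_x(\bm{e}_q<T_a\wedge T_b)+\mathbb{P}_x[e^{-qT_a};T_a<T_b]\mathbb{P}_a[\Gamma_{\bm{e}_q}]+\mathbb{P}_x[e^{-qT_b};T_b<T_a]\mathbb{P}_b[\Gamma_{\bm{e}_q}],
\end{align*}
and all prefactors are expressible in $h_q$, $h_q^B$, and $r_q(0)$ via (\ref{b9}), (\ref{b33}), and (\ref{b35}).

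Next, I would compute $\mathbb{P}_a[\Gamma_{\bm{e}_q}]$ (and symmetrically $\mathbb{P}_b[\Gamma_{\bm{e}_q}]$) via excursion theory at $a$. The Itô-type identity $\Gamma_t=1-\lambda_a\int_0^t\Gamma_s\,dL_s^a-\lambda_b\int_0^t\Gamma_s\,dL_s^b$, $\int_0^\infty q e^{-qt}dt=1$, and Fubini give
\begin{align*}
\mathbb{P}_y[\Gamma_{\bm{e}_q}]=1-\lambda_aJ_q^a(y)-\lambda_bJ_q^b(y),\qquad J_q^c(y):=\mathbb{P}_y\left[\int_0^\infty e^{-qs}\Gamma_s\,dL_s^c\right].
\end{align*}
Time-changing by the inverse local time $\eta^a$ and applying the exponential formula for the excursion Poisson point process yield
\begin{align*}
J_q^a(a)=\int_0^\infty e^{-\lambda_al}\mathbb{P}_a[e^{-q\eta_l^a-\lambda_bL_{\eta_l^a}^b}]\,dl=\frac{1}{\lambda_a+\Lambda_q^{a,b}},\quad \Lambda_q^{a,b}:=n^a[1-e^{-q\zeta-\lambda_bL^b(\epsilon)}],
\end{align*}
with $L^b(\epsilon)$ the local time at $b$ accumulated within the excursion $\epsilon$. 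Splitting the integrand over $\{T_b<\zeta\}$ vs.\ $\{T_b=\infty\}$ and using the excursion Markov property (\ref{b26}) at $T_b$ express $\Lambda_q^{a,b}$ in terms of $1/r_q(0)$, $n^a[e^{-qT_b};T_b<\zeta]$, and $\mathbb{P}_b[e^{-qT_a-\lambda_bL_{T_a}^b}]$; the last quantity is a one-point local-time Laplace transform, reducible to standard data by another excursion computation at $b$ analogous to the proof of Proposition~\ref{b43}. The cross term is handled by strong Markov at $T_b$:
\begin{align*}
J_q^b(a)=\mathbb{P}_a[e^{-qT_b-\lambda_aL_{T_b}^a}]J_q^b(b),
\end{align*}
with $J_q^b(b)=1/(\lambda_b+\Lambda_q^{b,a})$ and the mixed prefactor $\mathbb{P}_a[e^{-qT_b-\lambda_aL_{T_b}^a}]$ computed by extracting from the excursion PPP at $a$ the first excursion that hits $b$, conditional on the local time at $a$ reached at that moment.

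Third, I would assemble the closed form and multiply by $r_q(0)$. The prima facie divergence $r_q(0)\to\infty$ cancels against the vanishing $\mathbb{P}_x(\bm{e}_q<T_a\wedge T_b)$ through the identity $h_q^B(a-b)=h_q(a-b)+h_q(b-a)-h_q(a-b)h_q(b-a)/r_q(0)$ from (\ref{b33}); additional products of the form $h_q(\cdot)h_q(\cdot)/r_q(0)$ vanish in the limit. As $q\to 0+$, Proposition~\ref{b28} gives $h_q\to h$, Proposition~\ref{b32} gives $h_q^B\to h^B$, and recurrence gives $\mathbb{P}_x[e^{-qT_c};T_c<T_{c'}]\to\mathbb{P}_x(T_c<T_{c'})$; a systematic term-by-term simplification should then match the limit to the six-term formula for $\varphi_{a,b}^{(0),\lambda_a,\lambda_b}(x)$. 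For the second assertion, almost sure convergence of $r_q(0)\mathbb{P}_{X_t}[\Gamma_{\bm{e}_q}]$ to $\varphi_{a,b}^{(0),\lambda_a,\lambda_b}(X_t)$ follows by applying (\ref{c42}) pointwise at $x=X_t(\omega)$; for $L^1(\mathbb{P}_x)$ convergence, the closed form shows that $r_q(0)\mathbb{P}_y[\Gamma_{\bm{e}_q}]$ is, modulo coefficients bounded uniformly in $q$ as $q\to 0+$, a combination of $h_q(y-a)$, $h_q(y-b)$, and products divided by $r_q(0)$, so convergence reduces to the $L^1$-convergences $h_q(X_t-c)\to h(X_t-c)$ guaranteed by Proposition~\ref{b30} together with dominated convergence for the vanishing product terms.

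The hardest part will be the algebra in the third step: matching the assembled expression to the six-term formula for $\varphi_{a,b}^{(0),\lambda_a,\lambda_b}(x)$, with its specific denominators $1+\lambda_c h^B(\pm(a-b))$ and $\lambda_a+\lambda_b+\lambda_a\lambda_b h^B(a-b)$, requires careful bookkeeping of how each excursion-PPP contribution propagates through the first-hitting decomposition and repeated exploitation of the identity linking $h_q^B$ to $h_q$ and $r_q$.
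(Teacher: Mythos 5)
Your opening decomposition at $T_a\wedge T_b$ is equivalent to the paper's: it is (\ref{c1})--(\ref{c4}) after regrouping via (\ref{c23}), so that part is fine. Where you genuinely diverge is in the computation of $\mathbb{P}_a[\Gamma_{\bm{e}_q}]$. The paper computes $I_{a,b}^q=\mathbb{P}_a[\int_0^{T_b}e^{-\lambda_aL_s^a}qe^{-qs}ds]$ by an excursion decomposition isolating the first excursion from $a$ that hits $b$ (Lemma \ref{c5}), and then solves the two-by-two system (\ref{c23})--(\ref{c24}); you instead use the integration-by-parts identity $\Gamma_t=1-\lambda_a\int_0^t\Gamma_s\,dL_s^a-\lambda_b\int_0^t\Gamma_s\,dL_s^b$ together with the exponential formula for the bivariate subordinator $(\eta_l^a,L^b_{\eta_l^a})$, yielding $J_q^a(a)=1/(\lambda_a+\Lambda_q^{a,b})$. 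That identity is correct, and the exponential-formula device is exactly what the paper itself uses in Lemma \ref{f1}; your route buys a closed form for $\mathbb{P}_a[\Gamma_{\bm{e}_q}]$ without a linear system, at the cost of introducing the exponent $\Lambda_q^{a,b}$ and the mixed transform $\mathbb{P}_a[e^{-qT_b-\lambda_aL^a_{T_b}}]$, each of which still requires an excursion computation of its own.

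The real issue is in your third step, and it is more than bookkeeping: you have mislocated the cancellation. After multiplying by $r_q(0)$, the first term of your decomposition behaves as you describe, but the other two require $\lim_{q\to0+}r_q(0)\mathbb{P}_a[\Gamma_{\bm{e}_q}]$ (and its $b$-analogue), and $\mathbb{P}_a[\Gamma_{\bm{e}_q}]\to0$ at rate $1/r_q(0)$, so these are $\infty\cdot 0$ forms. In your parametrization $\mathbb{P}_a[\Gamma_{\bm{e}_q}]=\Lambda_q^{a,b}/(\lambda_a+\Lambda_q^{a,b})-\lambda_b\mathbb{P}_a[e^{-qT_b-\lambda_aL^a_{T_b}}]/(\lambda_b+\Lambda_q^{b,a})$, both fractions converge to the \emph{same} nonzero constant $\lambda_b/(\lambda_a+\lambda_b+\lambda_a\lambda_bh^B(a-b))$, so you face an $\infty-\infty$ cancellation whose resolution requires the first-order expansions in $1/r_q(0)$ of $\Lambda_q^{a,b}$, $\Lambda_q^{b,a}$ and the mixed transform simultaneously --- not merely their limits, and not merely the vanishing of $h_q(\cdot)h_q(\cdot)/r_q(0)$ products. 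Some of the needed expansions are accessible (e.g.\ $\Lambda_q^{a,b}=\tfrac{1}{r_q(0)}+n^a[e^{-qT_b}]\,\mathbb{P}_b[e^{-qT_a}(1-e^{-\lambda_bL^b_{T_a}})]$ and $r_q(0)(1-\mathbb{P}_b[e^{-qT_a}])=h_q(b-a)$), but you would also need the rate of $n^a[e^{-qT_b}]\to 1/h^B(a-b)$, which your stated toolkit does not supply and which requires a further first-passage identity. The paper avoids all of this by the packaging in (\ref{c6}): there the factor $r_q(0)$ lands on the $\lambda_a$ in the denominator of $I_{a,b}^q$, so the limits (\ref{c32}) and (\ref{c36}) follow from the zeroth-order facts (\ref{c30}), (\ref{c31}), (\ref{c35}) alone. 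To repair your plan, either adopt that packaging, or put your expression for $r_q(0)\mathbb{P}_a[\Gamma_{\bm{e}_q}]$ over a common denominator and verify explicitly that the $O(1)$ part of the numerator cancels before passing to the limit. The $L^1$ step via Proposition \ref{b30} is fine once the pointwise limit is secured.
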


Before proving Proposition \ref{c40}, we prove the following lemma.

\begin{lem}
\label{c5}
For distinct points $a,b\in \R$, for constants $\lambda_a,\lambda_b>0$, and for $q>0$, it holds that
\begin{align}
\label{c6}
I_{a,b}^q:=\mathbb{P}_a\left[\int_{0}^{T_b} e^{-\lambda_aL_{s}^a}qe^{-qs}ds\right]=\frac{1-n^a[e^{-qT_b}]h_q(b-a)}{1+\lambda_a r_q(0)+n^a[e^{-qT_b}] r_q(a-b)}.
\end{align}
\end{lem}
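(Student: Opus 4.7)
The plan is to split $\mathbb{P}_a[\int_0^\infty qe^{-qs-\lambda_a L_s^a}\,ds]$ at the stopping time $T_b$, so as to reduce $I_{a,b}^q$ to two simpler ingredients: the one-point Laplace functional $\Phi(x):=\mathbb{P}_x[e^{-\lambda_a L_{\bm{e}_q}^a}]$ (already given by Proposition \ref{b41}), and the joint Laplace transform $\mathbb{P}_a[e^{-qT_b-\lambda_a L_{T_b}^a}]$, which I will compute by excursion theory from $a$.

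First, rewriting $I_{a,b}^q=\mathbb{P}_a[e^{-\lambda_a L_{\bm{e}_q}^a};\bm{e}_q<T_b]$ and then applying the strong Markov property at $T_b$ on the complementary event $\{\bm{e}_q\ge T_b\}$ together with the memoryless property of $\bm{e}_q$ yields the identity $\Phi(a)=I_{a,b}^q+\mathbb{P}_a[e^{-qT_b-\lambda_a L_{T_b}^a}]\,\Phi(b)$. Proposition \ref{b41} with $f(u)=e^{-\lambda_a u}$, combined with the spatial translation that $L^a$ under $\mathbb{P}_x$ equals $L^0$ under $\mathbb{P}_{x-a}$ in distribution, gives the explicit formula $\Phi(x)=(1+\lambda_a h_q(x-a))/(1+\lambda_a r_q(0))$; in particular $\Phi(a)=1/(1+\lambda_a r_q(0))$ and $\Phi(b)=(1+\lambda_a h_q(b-a))/(1+\lambda_a r_q(0))$.

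Next I would carry out the excursion computation. Set $l^*:=L_{T_b}^a$ and $\alpha:=n^a[T_b<\infty]=1/h^B(b-a)$ (Proposition \ref{b39}). Marking the Poisson point process of excursions from $a$ by whether the excursion hits $b$ yields three independent objects: $l^*\sim\mathrm{Exp}(\alpha)$, the straddling excursion $\epsilon_{l^*}^a\sim n^a(\,\cdot\,|\,T_b<\infty)$, and the pre-$l^*$ subordinator $(\eta_l^a)_{l<l^*}$ with Laplace exponent $g(q):=n^a[1-e^{-q\zeta};\,T_b=\infty]$. Decomposing $T_b=\eta_{l^*-}^a+T_b(\epsilon_{l^*}^a)$ and integrating out $l^*$ gives $\mathbb{P}_a[e^{-qT_b-\lambda_a L_{T_b}^a}]=n^a[e^{-qT_b}]/(\alpha+g(q)+\lambda_a)$.

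To finish, I would simplify $\alpha+g(q)$ by combining (\ref{b21}), which says $n^a[1-e^{-q\zeta}]=1/r_q(0)$, with the Markov property of $n^a$ at the interior stopping time $T_b$: one has $n^a[e^{-q\zeta};T_b<\infty]=n^a[e^{-qT_b}]\,\mathbb{P}_b[e^{-qT_a}]=n^a[e^{-qT_b}]\,r_q(a-b)/r_q(0)$ by (\ref{b9}). This produces $\alpha+g(q)=(1+n^a[e^{-qT_b}]r_q(a-b))/r_q(0)$, and substituting into $I_{a,b}^q=\Phi(a)-\mathbb{P}_a[e^{-qT_b-\lambda_a L_{T_b}^a}]\Phi(b)$ together with the identity $r_q(0)-r_q(a-b)=h_q(b-a)$ from (\ref{b27}) produces the claimed formula after a short algebraic manipulation in which the factor $(1+\lambda_a r_q(0))$ neatly cancels out of the numerator. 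The main technical point is the justification of the three-fold independence claim in the marking step; the rest is bookkeeping.
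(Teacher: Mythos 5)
Your proof is correct, but it organizes the computation differently from the paper, and the comparison is worth recording. The paper attacks $I_{a,b}^q$ head-on: it splits the time integral $\int_0^{T_b}e^{-\lambda_aL_s^a}qe^{-qs}ds$ over the excursion intervals away from $a$ and then evaluates the resulting sum over the Poisson point process by separating the contribution of the excursions before the first one hitting $b$ from that of the straddling excursion (the two terms $(a)$ and $(b)$ of (\ref{c10}), computed in (\ref{c13})--(\ref{c15})). You instead apply excursion theory only to the multiplicative functional $J_{a,b}^q=\mathbb{P}_a[e^{-qT_b-\lambda_aL_{T_b}^a}]$, and recover the integral functional from the one-point formula of Proposition \ref{b41} via the strong Markov decomposition $\Phi(a)=I_{a,b}^q+J_{a,b}^q\Phi(b)$ (which is exactly the $\lambda_b=0$ case of the paper's (\ref{c23})). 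The excursion-theoretic inputs are the same in both arguments --- the splitting of $N^a$ at $\sigma^a_{\{T_b<\infty\}}$ into the independent triple (exponential level $l^*$, straddling excursion with law $n^a(\cdot\,|\,T_b<\infty)$, restricted subordinator $\eta^{a,A^c}$), the Markov property (\ref{b26}) of $n^a$ at $T_b$, and the identities (\ref{b21}), (\ref{b40}), (\ref{b9}) --- and the independence you flag as the main technical point is precisely what the paper establishes in (\ref{c13})--(\ref{c14}), so it is not a gap. What your route buys is a shorter Poissonian computation (a Laplace transform of a subordinator evaluated at an independent exponential level, rather than the double integral of (\ref{c15})) and, as a byproduct, a closed formula for $J_{a,b}^q$ itself, namely $J_{a,b}^q=n^a[e^{-qT_b}]r_q(0)/(1+\lambda_ar_q(0)+n^a[e^{-qT_b}]r_q(a-b))$, which the paper never writes down (it only needs the limit (\ref{c35})). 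The cost is the reliance on the quoted Proposition \ref{b41}, whereas the paper's proof is self-contained modulo the general excursion facts. I checked your final algebra: with $h_q(b-a)=r_q(0)-r_q(a-b)$ the numerator of $\Phi(a)-J_{a,b}^q\Phi(b)$ factors as $(1+\lambda_ar_q(0))(1-n^a[e^{-qT_b}]h_q(b-a))$, and the factor $1+\lambda_ar_q(0)$ indeed cancels to give (\ref{c6}).
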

\begin{proof}
By dividing the range of integration into excursion intervals, we have
\begin{align}
\label{c7}
I_{a,b}^q=\mathbb{P}_a\left[\sum_{l\le \sigma_{\{T_b<\infty\}}^a}\int_{\eta_{l-}^a}^{\eta_l^a\wedge T_b}e^{-\lambda_a l}qe^{-qs}ds\right],
\end{align}
where for $A\in \Sigma $, $\sigma_A^a$ is the first hitting time for the Poisson point process $(\epsilon_t^a)_{t\ge 0}:$
\begin{align}
\sigma_{A}^a:=\inf\{t\ge 0:\ \epsilon_t^a\in A\}.
\end{align}
Then, we have
\begin{align*}
\label{c10}
\displaystyle &\mathbb{P}_a\left[\int_{s\le \sigma_A^a} \int_{\mathcal{D}}F(\eta_{s-}^a,X,e)N^a(ds\otimes de)\right]\\
&=\mathbb{P}_a\left[\left(\int_{s< \sigma_A^a}+\int_{s=\sigma_A^a}\right) \int_{\mathcal{D}}F(\eta_{s-}^a,X,e)N^a(ds\otimes de)\right]=:(a)+(b).
\stepcounter{equation}\tag{\theequation} 
\end{align*}
Now, we let 
\begin{align}
\label{c11}
N^{a,A}(\cdot):=N^a(\cdot \cap \{(0,\infty)\times A\})
\end{align}
and let 
\begin{align}
\label{c12}
\eta_s^{a,A}:=\displaystyle \int_{(0,s]}\int_\mathcal{D}\zeta (e)N^{a,A}(ds\otimes de),
\end{align}
where $\zeta(e)$ is the excursion length of $e$. Since $\sigma_A^a$ and $N^{a,A^c}$ are independent, we have by the compensation formula,
\begin{align*}
\label{c13}
(a)&=\displaystyle \mathbb{P}_a\left[\int_{s<\sigma_A^a}\int_{\mathcal{D}}F(\eta_{s-}^{a,A^c},X,e)N^{a,A^c}(ds\otimes de)\right]\\
&=\displaystyle \mathbb{P}_a\left[\mathbb{P}_a\left[\int_{s\le t}\int_{\mathcal{D}}F(\eta_{s-}^{a,A^c},X,e)N^{a,A^c}(ds\otimes de)\right]\Bigg|_{t=\sigma_A^a}\right]\\
&=\mathbb{P}_a\left[\int_0^{\sigma_A^a}\int_{\mathcal{D}}F(\eta_{s-}^{a,A^c},X,e)n^a(de\cap A^c)ds\right].
\stepcounter{equation}\tag{\theequation} 
\end{align*}
Moreover, since $\sigma_A^a$ has the exponential distribution with its parameter $n^a(A)$, and since $\epsilon_{\sigma_A^a}^a$ has the distribution $n^a(\cdot |A)$, we have
\begin{align*}
\label{c14}
(b)&=\displaystyle \mathbb{P}_a\left[\int_{s=\sigma_A^a}\int_{\mathscr{D}}F(\eta_{s-}^{a,A^c},X,e)N^{a,A}(ds\otimes de)\right]\\
&=\mathbb{P}_a\left[F(\eta_{\sigma_A^a-}^{a,A^c},X,\epsilon_{\sigma_A^a}^a)\right]\\
&=\mathbb{P}_a\left[\int_0^\infty ds\int_{\mathcal{D}}F(\eta_{s-}^{a,A^c},X,e)n^a(A)e^{-n^a(A)s}n^a(de|A)\right]\\
&=\mathbb{P}_a\left[\int_0^\infty ds\int_{\mathcal{D}}F(\eta_{s-}^{a,A^c},X,e)e^{-n^a(A)s}n^a(de\cap A)\right].
\stepcounter{equation}\tag{\theequation} 
\end{align*}
Thus, by (\ref{c7}), (\ref{c10}), (\ref{c13}), and (\ref{c14}), we have
\begin{align*}
\label{c15}
I_{a,b}^q&=\mathbb{P}_a\left[\sum_{l\le \sigma_{\{T_b<\infty\}}^a}\int_0^{\eta_l^a\wedge T_b-\eta_{l-}^a}e^{-\lambda_al}qe^{-q(s+\eta_{l-}^a)}ds\right]\\
&=\mathbb{P}_a\left[\int_{l\le \sigma_{\{T_b<\infty\}}^a}\int_{\mathcal{D}}\left(\int_0^{\eta_l^a\wedge T_b-\eta_{l-}^a}e^{-\lambda_al}qe^{-q(s+\eta_{l-}^a)}ds\right)N^a(dl\otimes de)\right]\\
&=\mathbb{P}_a\left[\int_0^{\sigma_{\{T_b<\infty\}}^a}\int_{\mathcal{D}}\left(\int_0^{T_a}e^{-\lambda_al}qe^{-q(s+\eta_{l-}^{a,\{T_b=\infty\}})}ds\right)n^a(de\cap \{T_b=\infty\})dl\right]\\
&\qquad +\mathbb{P}_a\left[\int_0^\infty dl\int_{\mathcal{D}}\left(\int_0^{T_b}e^{-\lambda_al}qe^{-q(s+\eta_{l-}^{a,\{T_b=\infty\}})}ds\right)e^{-n^a(T_b<\infty)l}n^a(de\cap \{T_b<\infty\})\right]\\
&=\mathbb{P}_a\left[\int_0^{\sigma_{\{T_b<\infty\}}^a}e^{-\lambda_al}e^{-q\eta_{l-}^{a,\{T_b=\infty\}}}dl\right]n^a(1-e^{-qT_a},\ T_b=\infty)\\
&\qquad +\mathbb{P}_a\left[\int_0^\infty e^{-\lambda_a l}e^{-q\eta_{l-}^{a,\{T_b=\infty\}}}e^{-n^a(T_b<\infty)l}dl\right]n^a(1-e^{-qT_b}, T_b<\infty)\\
&=\left(\int_0^\infty dt\int_0^te^{-\lambda_al}\mathbb{P}_a\left[e^{-q\eta_{l-}^{a,\{T_b=\infty\}}}\right]n^a(T_b<\infty)e^{-n^a(T_b<\infty)t}dl\right)n^a(1-e^{-qT_a},\ T_b=\infty)\\
&\qquad +\left(\int_0^\infty e^{-\lambda_a l}\mathbb{P}_a\left[e^{-q\eta_{l-}^{a,\{T_b=\infty\}}}\right]e^{-n^a(T_b<\infty)l}dl\right)n^a(1-e^{-qT_b}, T_b<\infty).
\stepcounter{equation}\tag{\theequation} 
\end{align*}
Since $(\eta_l^a)_{l\ge 0}$ is a subordinator with its \Levy\ measure $n^a(T_a\in dx)$ and no drift, by \Levy--Khinchin formula, we have
\begin{align*}
\label{c16}
\displaystyle \mathbb{P}_a\left[e^{-q\eta_{l-}^{a,\{T_b=\infty\}}}\right]&=\mathbb{P}_a\left[e^{-q\eta_{l}^{a,\{T_b=\infty\}}}\right]\\
&=\exp \left\{-l\int_{(0,\infty)} (1-e^{-qx})n^a(T_a\in dx \cap \{T_b=\infty\})\right\}\\
&=\exp \left\{-ln^a[1-e^{-qT_a};\ T_b=\infty]\right\}.
\stepcounter{equation}\tag{\theequation} 
\end{align*}
By the strong Markov property of the excursion measure $n^a$ and by (\ref{b9}), we have
\begin{align*}
\label{c17}
\displaystyle n^a[e^{-qT_a};\ T_b<\infty]&=\displaystyle \int n^a\left[e^{-qT_b}1_{\{T_b<T_a\}};\ X_{T_b}\in dx\right]\mathbb{P}_x^{a}\left[e^{-qT_a}\right]\\
&=n^a[e^{-qT_b};\ T_b<\infty]\mathbb{P}_b\left[e^{-qT_a}\right]\\
&=n^a[e^{-qT_b}]\cdot\frac{r_q(a-b)}{r_q(0)},
\stepcounter{equation}\tag{\theequation} 
\end{align*}
where $\mathbb{P}_x^a$ is the distribution of the killed process upon $T_a$. Thus, by (\ref{b21}), (\ref{b40}), and  (\ref{c17}), we have
\begin{align*}
\label{c18}
\displaystyle n^a[1-e^{-qT_a};\ T_b=\infty]&=n^a[1-e^{-qT_a}]-n^a[T_b<\infty]+n^a[e^{-qT_a};\ T_b<\infty]\\
&=\frac{1}{r_q(0)}-\frac{1}{h^B(b-a)}+n^a[e^{-qT_b}]\cdot \frac{r_q(a-b)}{r_q(0)}.
\stepcounter{equation}\tag{\theequation} 
\end{align*}
Consequently, by (\ref{c15}), (\ref{c16}), and (\ref{c18}), we obtain
\begin{align*}
\label{c19}
I_{a,b}^q&=\left(\int_0^\infty \int_0^te^{-\lambda_al}e^{-ln^a[1-e^{-qT_a};\ T_b=\infty]}dl\cdot \frac{1}{h^B(b-a)}\cdot e^{-\frac{t}{h^B(b-a)}}dt\right)n^a[1-e^{-qT_a};\ T_b=\infty]\\
&\qquad +\left(\int_0^\infty e^{-\lambda_a l}e^{-ln^a[1-e^{-qT_a};\ T_b=\infty]}e^{-\frac{l}{h^B(b-a)}}dl\right)n^a[1-e^{-qT_b};\ T_b<\infty]\\
&=\frac{1-n^a[e^{-qT_b}]h_q(b-a)}{1+\lambda_a r_q(0)+n^a[e^{-qT_b}] r_q(a-b)}.
\stepcounter{equation}\tag{\theequation} 
\end{align*}
The proof is complete.
\end{proof}

Let us proceed to the proof of Proposition \ref{c40}.

\begin{proof}[Proof of Proposition \ref{c40}]
First, by dividing the range of integration, we have
\begin{align*}
\label{c1}
\mathbb{P}_x\left[\Gamma_{\bm{e}_q}\right]&=\mathbb{P}_x\left[\int_0^\infty e^{-\lambda_aL_s^a-\lambda_b L_s^b}qe^{-qs}ds\right]\\
&=\mathbb{P}_x\left[\int_0^{T_a\wedge T_b}qe^{-qs}ds\right]\\
&\qquad +\mathbb{P}_x\left[\int_{T_a}^\infty e^{-\lambda_aL_s^a-\lambda_b L_s^b}qe^{-qs}ds,\ T_a<T_b\right]\\
&\qquad +\mathbb{P}_x\left[\int_{T_b}^\infty e^{-\lambda_aL_s^a-\lambda_b L_s^b}qe^{-qs}ds,\ T_b<T_a\right].
\stepcounter{equation}\tag{\theequation}
\end{align*}
In the first term of (\ref{c1}), by a simple calculation, we have
\begin{align}
\label{c3}
\mathbb{P}_x\left[\int_0^{T_a\wedge T_b}qe^{-qs}ds\right]=1-\mathbb{P}_x\left[e^{-q(T_a\wedge T_b)}\right].
\end{align}

Next, we consider the second term of (\ref{c1}). By the strong Markov property, we have
\begin{align*}
\label{c4}
&\mathbb{P}_x\left[\int_{T_a}^\infty e^{-\lambda_aL_s^a-\lambda_b L_s^b}qe^{-qs}ds,\ T_a<T_b\right]\\
&=\mathbb{P}_x\left[\int_{T_a}^{T_b} e^{-\lambda_aL_s^a}qe^{-qs}ds,\ T_a<T_b\right]+\mathbb{P}_x\left[\int_{T_b}^\infty e^{-\lambda_aL_s^a-\lambda_b L_s^b}qe^{-qs}ds,\ T_a<T_b\right]\\
&=\mathbb{P}_x\left[\int_{0}^{T_b-T_a} e^{-\lambda_a(L_{s+T_a}^a-L_{T_a}^a)}qe^{-q(s+T_a)}ds,\ T_a<T_b\right]\\
&\qquad +\mathbb{P}_x\left[\int_{0}^\infty e^{-\lambda_a(L_{s+T_b}^a-L_{T_b}^a+L_{T_b}^a)-\lambda_b (L_{s+T_b}^b-L_{T_b}^b)}qe^{-q(s+T_b)}ds,\ T_a<T_b\right]\\
&=\mathbb{P}_x\left[e^{-qT_a},\ T_a<T_b\right]\mathbb{P}_a\left[\int_{0}^{T_b} e^{-\lambda_aL_{s}^a}qe^{-qs}ds\right]+\mathbb{P}_x\left[e^{-\lambda_a L_{T_b}^a}e^{-qT_b},\ T_a<T_b\right]\mathbb{P}_b\left[\Gamma_{\bm{e}_q}\right].
\stepcounter{equation}\tag{\theequation}
\end{align*}

Here, we have
\begin{align}
\label{c21}
\mathbb{P}_a\left[\Gamma_{\bm{e}_q}\right]=\frac{I_{a,b}^q+J_{a,b}^qI_{b,a}^q}{1-J_{a,b}^qJ_{b,a}^q},
\end{align}
where
\begin{align}
\label{c22}
J_{a,b}^q:&=\displaystyle \mathbb{P}_a\left[e^{-qT_b}e^{-\lambda_aL_{T_b}^a}\right].
\end{align}
Indeed, by the strong Markov property at $T_b$, we have
\begin{align*}
\label{c23}
\mathbb{P}_a\left[\Gamma_{\bm{e}_q}\right]&=\mathbb{P}_a\left[\int_0^{T_b}e^{-\lambda_aL_s^a}qe^{-qs}ds\right]+\mathbb{P}_a\left[\int_{T_b}^\infty e^{-\lambda_aL_s^a-\lambda_b L_s^b}qe^{-qs}ds\right]=I_{a,b}^{q}+J_{a,b}^q\mathbb{P}_b\left[\Gamma_{\bm{e}_q}\right].
\stepcounter{equation}\tag{\theequation} 
\end{align*}
Similarly, we have
\begin{align*}
\label{c24}
\mathbb{P}_b\left[\Gamma_{\bm{e}_q}\right]=I_{b,a}^q+J_{b,a}^q\mathbb{P}_a\left[\Gamma_{\bm{e}_q}\right].
\stepcounter{equation}\tag{\theequation} 
\end{align*}
Thus, by solving equations (\ref{c23}) and (\ref{c24}), we obtain (\ref{c21}). Therefore, from (\ref{c6}), (\ref{c1}), (\ref{c3}), (\ref{c4}), and (\ref{c21}), we complete the calculation of the expectation $\mathbb{P}_x\left[\Gamma_{\bm{e}_q}\right]$.

Next, we consider the limit of $r_q(0)\mathbb{P}_{x}\left[\Gamma_{\bm{e}_q}\right]$ as $q\to 0+.$ By Theorem \ref{b28} and by (\ref{c3}), (\ref{b35}), and (\ref{b36}), we obtain
\begin{align*}
\label{c27}
&\lim_{q\to 0+}r_q(0)\mathbb{P}_x\left[\int_0^{T_a\wedge T_b}qe^{-qs}ds\right]\\
&=\lim_{q\to 0+}\frac{h_q(x-b)h_q(b-a)+h_q(x-a)h_q(a-b)-h_q(a-b)h_q(b-a)}{h_q^B(a-b)}\\
&=\frac{h(x-b)h(b-a)+h(x-a)h(a-b)-h(a-b)h(b-a)}{h^B(a-b)}\\
&=h(x-a)-\mathbb{P}_x(T_b<T_a)h(b-a).
\stepcounter{equation}\tag{\theequation} 
\end{align*}
By the monotone convergence theorem and by (\ref{b40}), we have
\begin{align}
\label{c30}
\lim_{q\to 0+}n^a[e^{-qT_b}]=n^a(T_b<\infty)=\frac{1}{h^B(a-b)},
\end{align}
and by (\ref{b9}),
\begin{align}
\label{c31}
\lim_{q\to 0+}\frac{r_q(a-b)}{r_q(0)}=\lim_{q\to 0+}\mathbb{P}_{b-a}[e^{-qT_0}]=\mathbb{P}_{b-a}(T_0<\infty)=1.
\end{align}
Thus, by (\ref{c6}), (\ref{b22}), (\ref{c30}), and (\ref{c31}), we have
\begin{align}
\label{c32}
\lim_{q\to 0+}r_q(0)I_{a,b}^q=\lim_{q\to 0+}\frac{1-n^a[e^{-qT_b}]h_q(b-a)}{\frac{1}{r_q(0)}+\lambda_a+n^a[e^{-qT_b}] \cdot\frac{r_q(a-b)}{r_q(0)}}=\frac{h(a-b)}{1+\lambda_ah^B(a-b)}.
\end{align}
By the monotone convergence theorem and by (\ref{b44}), we have
\begin{align*}
\label{c35}
\lim_{q\to 0+}J_{a,b}^q=\mathbb{P}_a\left[e^{-\lambda_aL_{T_b}^a}\right]=\frac{1}{h^B(a-b)}\int_0^\infty e^{-\frac{u}{h^B(a-b)}}e^{-\lambda_a u}du=\frac{1}{1+\lambda_ah^B(a-b)}.
\stepcounter{equation}\tag{\theequation} 
\end{align*}
Thus, by (\ref{c21}), (\ref{c32}), and (\ref{c35}), we obtain
\begin{align*}
\label{c36}
\lim_{q\to 0+}r_q(0)&\mathbb{P}_a\left[\Gamma_{\bm{e}_q}\right]=\lim_{q\to 0+}\frac{r_q(0)I_{a,b}^q+J_{a,b}^q\cdot r_q(0)I_{b,a}^q}{1-J_{a,b}^qJ_{b,a}^q}=\frac{1+\lambda_bh(a-b)}{\lambda_a+\lambda_b+\lambda_a\lambda_b h^B(a-b)}.
\stepcounter{equation}\tag{\theequation} 
\end{align*}
By the monotone convergence theorem and by (\ref{b44}), we have
\begin{align*}
\label{c39}
\lim_{q\to 0+}\mathbb{P}_x\left[e^{-\lambda_a L_{T_b}^a}e^{-qT_b},\ T_a<T_b\right]&=\mathbb{P}_x\left[e^{-\lambda_a L_{T_b}^a},\ T_a<T_b\right]\\
&=\mathbb{P}_x\left[e^{-\lambda_a L_{T_b}^a}\right]-\mathbb{P}_x(T_b<T_a)\\
&=\mathbb{P}_x(T_a<T_b)\cdot \frac{1}{1+\lambda_ah^B(b-a)}.
\stepcounter{equation}\tag{\theequation} 
\end{align*}
Therefore, the assertion (\ref{c42}) holds by (\ref{c1}), (\ref{c4}), (\ref{c27}), (\ref{c32}), (\ref{c36}), and (\ref{c39}). Moreover, from (\ref{b31}), we also obtain (\ref{c42-1}). 
\end{proof}

We define
\begin{align}
\label{c43}
N_{a,b,t}^{q,\lambda_a,\lambda_b}:&=\displaystyle r_q(0)\mathbb{P}_x\left[\Gamma_{\bm{e}_q};\ t<\bm{e}_q|\F_t\right],\\
\label{c44}
M_{a,b,t}^{q,\lambda_a,\lambda_b}:&=r_q(0)\mathbb{P}_x\left[\Gamma_{\bm{e}_q}|\F_t\right]
\end{align}
for $q>0.$

\begin{thm}
\label{c46}
Let $x\in \R$. Then, $(M_{a,b,t}^{(0),\lambda_a,\lambda_b},\ t\ge 0)$ is a non-negative $((\F_t),\mathbb{P}_x)$-martingale, and it holds that 
\begin{align}
\label{c47}
\lim_{q\to 0+}N_{a,b,t}^{q,\lambda_a,\lambda_b}=\lim_{q\to 0+}M_{a,b,t}^{q,\lambda_a,\lambda_b}=M_{a,b,t}^{(0),\lambda_a,\lambda_b}\ \mathrm{a.s.\ and\ in}\ L^1(\mathbb{P}_x).
\end{align}
Consequently, if $M_{a,b,0}^{(0),\lambda_a,\lambda_b}>0$ under $\mathbb{P}_x$, it holds that 
\begin{align}
\label{c48}
\lim_{q\to 0+}\frac{\mathbb{P}_x[F_s\cdot \Gamma_{\bm{e}_q}]}{\mathbb{P}_x[\Gamma_{\bm{e}_q}]}= \mathbb{P}_x\left[F_s \cdot\frac{M_{a,b,s}^{(0),\lambda_a,\lambda_b}}{M_{a,b,0}^{(0),\lambda_a,\lambda_b}}\right]
\end{align}
for all bounded $\F_s$-measurable functionals $F_s$.
\end{thm}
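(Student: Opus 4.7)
The approach reduces Theorem~\ref{c46} to Proposition~\ref{c40} via explicit formulas for $N^{q,\lambda_a,\lambda_b}_{a,b,t}$ and $M^{q,\lambda_a,\lambda_b}_{a,b,t}$. The latter is an $((\F_t),\mathbb{P}_x)$-martingale for free, as a conditional expectation of an integrable random variable. Splitting $\mathbb{P}_x[\Gamma_{\bm e_q}\mid\F_t]$ on $\{\bm e_q\le t\}$ versus its complement --- using that $\bm e_q\indep X$ (so $\Gamma_{\bm e_q}$ is $\F_t$-measurable on the first event) and combining the lack of memory of the exponential law with the Markov property of $X$ (on the second) --- I would establish
\begin{align*}
N_{a,b,t}^{q,\lambda_a,\lambda_b}&=e^{-qt}\,\Gamma_t\cdot r_q(0)\,\mathbb{P}_{X_t}[\Gamma_{\bm e_q}],\\
M_{a,b,t}^{q,\lambda_a,\lambda_b}-N_{a,b,t}^{q,\lambda_a,\lambda_b}&=r_q(0)\int_0^t\Gamma_s\,qe^{-qs}\,ds.
\end{align*}

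\textbf{Passage to the limit.} Convergence of $N^q_{a,b,t}$ is immediate from Proposition~\ref{c40} applied at the random starting point $X_t$, together with $e^{-qt}\Gamma_t\le 1$: both a.s.\ and in $L^1(\mathbb{P}_x)$, $N^q_{a,b,t}\to M_{a,b,t}^{(0),\lambda_a,\lambda_b}$. For the residual, the deterministic bound $r_q(0)(1-e^{-qt})\le t\, q\, r_q(0)$ reduces matters to verifying $qr_q(0)\to 0$ as $q\to 0+$. This is the null-recurrence estimate: by \eqref{b21} and monotone convergence,
\begin{align*}
\frac{1}{qr_q(0)}=n^0\!\left[\frac{1-e^{-qT_0}}{q}\right]\uparrow n^0[T_0]=\infty,
\end{align*}
where the final equality reflects the absence of an invariant probability measure for a recurrent \Levy\ process on $\R$, so $\eta^0$ has infinite mean.

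\textbf{Consequences.} With $M^q\to M^{(0)}$ in $L^1(\mathbb{P}_x)$, the martingale identity transfers: for bounded $\F_t$-measurable $F_t$ and $u\ge 0$,
\begin{align*}
\mathbb{P}_x\bigl[F_t M_{a,b,t+u}^{(0),\lambda_a,\lambda_b}\bigr]=\lim_{q\to 0+}\mathbb{P}_x\bigl[F_t M_{a,b,t+u}^{q,\lambda_a,\lambda_b}\bigr]=\lim_{q\to 0+}\mathbb{P}_x\bigl[F_t M_{a,b,t}^{q,\lambda_a,\lambda_b}\bigr]=\mathbb{P}_x\bigl[F_t M_{a,b,t}^{(0),\lambda_a,\lambda_b}\bigr],
\end{align*}
and non-negativity is clear from $\Gamma_t\ge 0$ and positivity of $\varphi_{a,b}^{(0),\lambda_a,\lambda_b}$. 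For~\eqref{c48}, the martingale property of $M^q$ yields $r_q(0)\mathbb{P}_x[F_s\Gamma_{\bm e_q}]=\mathbb{P}_x\bigl[F_s M_{a,b,s}^{q,\lambda_a,\lambda_b}\bigr]$; passing to the limit via $L^1$-convergence and dividing by $r_q(0)\mathbb{P}_x[\Gamma_{\bm e_q}]\to M_{a,b,0}^{(0),\lambda_a,\lambda_b}$ (positive by hypothesis) gives~\eqref{c48}. The only genuinely delicate step is the null-recurrence bound $qr_q(0)\to 0$; everything else is a direct translation of Proposition~\ref{c40}.
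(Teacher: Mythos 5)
Your proposal is correct and follows essentially the same route as the paper: the identity $N^{q}_{a,b,t}=r_q(0)e^{-qt}\Gamma_t\,\mathbb{P}_{X_t}[\Gamma_{\bm e_q}]$ via lack of memory plus the Markov property, convergence of $N^q$ from Proposition~\ref{c40} applied at $X_t$, and the bound $M^q-N^q\le qr_q(0)\to 0$. The only difference is that the paper simply cites Lemma 15.5 of Tsukada for $qr_q(0)\to 0$, whereas you derive it from $1/(qr_q(0))=n^0[(1-e^{-qT_0})/q]\uparrow n^0[T_0]=\infty$ — which is fine, though the final equality deserves a word more than the invariant-measure heuristic (e.g., $L^x_t/t\to 1/n^0[T_0]$ for every $x$ together with $\int_{\R}L^x_t\,dx=t$ forces $n^0[T_0]=\infty$).
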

\begin{proof}
We denote by $\mathbb{P}_x^\mathscr{G}[\cdot]$ the conditional expectation for $\mathbb{P}_x$ with respect to a $\sigma$-algebra $\mathscr{G}$. By the lack of memory property of an exponential distribution and by the Markov property, we have
\begin{align*}
\label{c49}
N_{a,b,t}^{q,\lambda_a,\lambda_b}&=r_q(0)\mathbb{P}_x^{\F_t}\left[e^{-\lambda_a L_{\bm{e}_q-t+t}^a-\lambda_b L_{\bm{e}_q-t+t}^b}\Big|\ t<\bm{e}_q\right]\mathbb{P}_x(t<\bm{e}_q)\\
&=r_q(0)e^{-qt}\mathbb{P}_x\left[e^{-\lambda_a (L_{\bm{e}_q}^a\circ \theta_t +L_t^a)-\lambda_b(L_{\bm{e}_q}^b\circ \theta_t+L_t^b)}\Big|\F_t\right]\\
&=r_q(0)e^{-qt}e^{-\lambda_aL_t^a-\lambda_bL_t^b}\mathbb{P}_{X_t}\left[e^{-\lambda_aL_{\bm{e}_q}^a-\lambda_b L_{\bm{e}_q}^b}\right].
\stepcounter{equation}\tag{\theequation} 
\end{align*}
By (\ref{c42-1}) of Proposition \ref{c40}, we obtain
\begin{align}
\label{c51}
\lim_{q\to 0+} N_{a,b,t}^{q,\lambda_a,\lambda_b}=M_{a,b,t}^{(0),\lambda_a,\lambda_b}\ \mathrm{a.s.\ and\ in\ }L^1(\mathbb{P}_x).
\end{align}
Since $qr_q(0)\to 0$ as $q\to 0+$ by e.g., Lemma 15.5 of \cite{Tukada}, we have
\begin{align*}
\label{c53}
M_{a,b,t}^{q,\lambda_a,\lambda_b}-N_{a,b,t}^{q,\lambda_a,\lambda_b}&=r_q(0)\mathbb{P}_{X_t}\left[\Gamma_{\bm{e}_q};\ \bm{e}_q\le t|\F_t\right]\\
&=q r_q(0)\int_0^t e^{-\lambda_aL_u^a-\lambda_bL_u^b}e^{-qu}du\\
&\le qr_q(0)\to 0
\stepcounter{equation}\tag{\theequation} 
\end{align*}
as $q\to 0+$. Thus, we obtain
\begin{align}
\label{c56}
\lim_{q\to 0+}M_{a,b,t}^{q,\lambda_a,\lambda_b}=M_{a,b,t}^{(0),\lambda_a,\lambda_b}\ \mathrm{a.s.\ and\ }\mathrm{in}\ L^1(\mathbb{P}_x).
\end{align}
Therefore, the proof is complete.
\end{proof}

%%%%%%%%%%%%%%%%%%%%%%%%%%%%%%%%%%%%%%%%%%%%%%%%%%%%%%%%%%%%%%%%%%%%%%%%%%%%%%%%%%%%%%%%%%%%%%%%%%%%%%%%%%%%%%%%%%%%%%%%%%%%%%%%%%%%%%%%%%%%%%%%%%%%%%%%%%%%%%%%%%%%%%%%%%%%%%%%%%%%%%%%%%%%%%%%%%%%%%%%%%%%%%%%%%%%%%%%%%%%%%%%%%%%%%%%%%%%%%%%

\section{Hitting time clock}
\label{S4}
Let us find the limit of $h^B(c)\mathbb{P}_{x}\left[\Gamma_{T_c}\right]$ as $c\to \pm \infty$.

\begin{prop}
\label{d38}
For distinct points $a,b\in \R$ and for constants $\lambda_a,\lambda_b>0$, it holds that
\begin{align}
\label{d40}
\lim_{c\to \pm \infty}h^B(c)\mathbb{P}_x\left[\Gamma_{T_c}\right]=\varphi_{a,b}^{(\pm 1),\lambda_a,\lambda_b}(x)
\end{align}
for all $x\in \R$, and 
\begin{align}
\label{d40-1}
\lim_{c\to \pm \infty}h^B(c)\mathbb{P}_{X_t}\left[\Gamma_{T_c}\right]=\varphi_{a,b}^{(\pm 1),\lambda_a,\lambda_b}(X_t)\ \mathrm{a.s.\ and\ in\ }L^1(\P_x).
\end{align}
\end{prop}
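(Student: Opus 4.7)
The plan is to adapt the structure of the proof of Proposition~\ref{c40} to the hitting-time clock, replacing the memoryless decomposition with excursion-theoretic arguments specific to $T_c$. First, by strong Markov applied at $T_a \wedge T_b \wedge T_c$, decompose
\begin{align*}
\mathbb{P}_x[\Gamma_{T_c}] = \mathbb{P}_x(T_c < T_a \wedge T_b) + \mathbb{P}_x(T_a < T_b \wedge T_c)\mathbb{P}_a[\Gamma_{T_c}] + \mathbb{P}_x(T_b < T_a \wedge T_c)\mathbb{P}_b[\Gamma_{T_c}],
\end{align*}
noting that $L^a = L^b = 0$ on the relevant events up to this stopping time. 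The probabilities on the right-hand side are expressible via Proposition~\ref{b47}, so it remains to compute $\mathbb{P}_a[\Gamma_{T_c}]$ and $\mathbb{P}_b[\Gamma_{T_c}]$.

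Setting $I_{a,b}^c := \mathbb{P}_a[e^{-\lambda_a L_{T_c}^a}; T_c < T_b]$ and $J_{a,b}^c := \mathbb{P}_a[e^{-\lambda_a L_{T_b}^a}; T_b < T_c]$, a further strong Markov application at $T_b$ yields $\mathbb{P}_a[\Gamma_{T_c}] = I_{a,b}^c + J_{a,b}^c \mathbb{P}_b[\Gamma_{T_c}]$, and symmetrically for $\mathbb{P}_b[\Gamma_{T_c}]$. Solving this $2\times 2$ linear system, in exact analogy with~(\ref{c21}), gives
\begin{align*}
\mathbb{P}_a[\Gamma_{T_c}] = \frac{I_{a,b}^c + J_{a,b}^c I_{b,a}^c}{1 - J_{a,b}^c J_{b,a}^c}.
\end{align*}
To evaluate $I_{a,b}^c$ and $J_{a,b}^c$, I would invoke the Poisson point process structure of excursions from $a$, as in Lemma~\ref{c5}. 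Since under $\mathbb{P}_a$ the local time $L_{T_b \wedge T_c}^a$ equals $\sigma^a_{A_b \cup A_c}$, the local time at which the first excursion hitting $\{b,c\}$ begins, with $A_b := \{T_b < T_c \wedge T_a\}$ and $A_c := \{T_c < T_b \wedge T_a\}$ inside $n^a$, elementary properties of Poisson point processes give the clean formulas
\begin{align*}
I_{a,b}^c = \frac{n^a(A_c)}{\lambda_a + n^a(A_b) + n^a(A_c)}, \qquad J_{a,b}^c = \frac{n^a(A_b)}{\lambda_a + n^a(A_b) + n^a(A_c)}.
\end{align*}
The quantities $n^a(A_b)$ and $n^a(A_c)$ are themselves obtained from a $2\times 2$ system: combining Proposition~\ref{b39} (which gives $n^a(T_b < T_a) = 1/h^B(b-a)$ and $n^a(T_c < T_a) = 1/h^B(c-a)$) with Markov-at-$T_c$ and Markov-at-$T_b$ relations expresses $n^a(A_b), n^a(A_c)$ through transition probabilities handled by Proposition~\ref{b32}.

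The final step is to multiply by $h^B(c)$ and send $c \to \pm\infty$. The essential asymptotic ingredients are Proposition~\ref{b37} ($h^B(c) \to \infty$) and Proposition~\ref{b29} ($h(x+c) - h(c) \to \pm x/m^2$), from which the shift $h^{(\pm 1)}(x) = h(x) \pm x/m^2$ emerges naturally in the cancellations. The main obstacle will be the book-keeping: one must show that the algebraic combination of $h^B(c)\mathbb{P}_x(T_c < T_a \wedge T_b)$, $h^B(c) I_{a,b}^c$, and the remaining hitting-probability factors assembles exactly into the formula for $\varphi_{a,b}^{(\pm 1),\lambda_a,\lambda_b}(x)$. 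The almost sure and $L^1(\mathbb{P}_x)$ convergence in~(\ref{d40-1}) then follows from the pointwise convergence~(\ref{d40}) applied at $X_t$, combined with a uniform bound on $h^B(c)\mathbb{P}_y[\Gamma_{T_c}]$ from $\Gamma_{T_c} \le 1$ and the explicit formulas above, in parallel with the treatment in~\cite{TY}.
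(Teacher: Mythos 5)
Your proposal is correct in outline and follows the same global strategy as the paper: decompose at the first hitting of $\{a,b,c\}$, reduce $\mathbb{P}_a[\Gamma_{T_c}]$ and $\mathbb{P}_b[\Gamma_{T_c}]$ to a $2\times 2$ linear system in the conditional Laplace transforms $I_{a,b}^c=\mathbb{P}_a[e^{-\lambda_aL_{T_c}^a};T_c<T_b]$ and $J_{a,b}^c=\mathbb{P}_a[e^{-\lambda_aL_{T_b}^a};T_b<T_c]$, and then pass to the limit using (\ref{b29-1}) and (\ref{b38}). Your three-term decomposition is a regrouped version of the paper's five-term identity (\ref{d1}), and your linear system is exactly (\ref{d15})--(\ref{d16}). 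Where you genuinely diverge is in evaluating $I_{a,b}^c$ and $J_{a,b}^c$: the paper (Lemmas \ref{d2} and \ref{d12}) obtains them by a second round of strong-Markov inclusion--exclusion, solving (\ref{d7}) to express $\mathbb{P}_a[e^{-\lambda_aL_{T_c}^a};T_c<T_b]$ through the \emph{unconditional} quantities $\mathbb{P}_a[e^{-\lambda_aL_{T_c}^a}]$, $\mathbb{P}_b[e^{-\lambda_aL_{T_c}^a}]$, $\mathbb{P}_c[e^{-\lambda_aL_{T_b}^a}]$, which are already available from (\ref{b44}) and (\ref{b36}); you instead use the Poisson point process of excursions from $a$, noting that $L^a_{T_b\wedge T_c}$ is the first arrival time of the marked excursions $A_b\cup A_c$ and that the mark is independent of the arrival time, which gives the closed forms $I_{a,b}^c=n^a(A_c)/(\lambda_a+n^a(A_b)+n^a(A_c))$, with $n^a(A_b),n^a(A_c)$ recovered from Propositions \ref{b39} and \ref{b32} via the Markov property (\ref{b26}) of $n^a$. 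Both routes are valid; yours buys more transparent closed forms at the cost of justifying the excursion-theoretic identification (which parallels Lemma \ref{c5}), while the paper's stays entirely within hitting-time algebra and the prepared Proposition \ref{b43}.

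One point to tighten: for the $L^1(\mathbb{P}_x)$ convergence in (\ref{d40-1}), the bound $\Gamma_{T_c}\le 1$ is not enough, since it only yields $h^B(c)\mathbb{P}_y[\Gamma_{T_c}]\le h^B(c)\to\infty$. The domination that makes the argument work is the subadditivity of $h$, namely $h^B(c)\mathbb{P}_{X_t}(T_c<T_a)=h(a-c)+h(X_t-a)-h(X_t-c)\le h(X_t-a)+h(a-X_t)$, together with the fact that the right-hand side lies in $L^1(\mathbb{P}_x)$; the remaining factors in your explicit formulas are bounded uniformly in $c$. You should state this dominating function explicitly rather than appeal to a generic uniform bound.
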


Before proving Proposition \ref{d38}, we prove the following lemmas.

\begin{lem}
\label{d2}
For $x\in \R,$ for distinct points $a,b,c\in \R,\ $and for $\lambda_a>0$, it holds that
\begin{align*}
\label{d3}
\mathbb{P}_x&\left[e^{-\lambda_aL_{T_c}^a};\ T_a<T_c<T_b\right]=\mathbb{P}_x(T_a<T_b\wedge T_c)\cdot \frac{\displaystyle \mathbb{P}_a\left[e^{-\lambda_aL_{T_c}^a}\right]-\mathbb{P}_a\left[e^{-\lambda_a L_{T_b}^a}\right]\mathbb{P}_b\left[e^{-\lambda_a L_{T_c}^a}\right]}{\displaystyle 1-\mathbb{P}_c\left[e^{-\lambda_a L_{T_b}^a}\right]\mathbb{P}_b\left[e^{-\lambda_a L_{T_c}^a}\right]}.
\stepcounter{equation}\tag{\theequation} 
\end{align*}
In particular, it can be represented only by $h$.
\end{lem}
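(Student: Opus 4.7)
I plan to argue in the spirit of Proposition \ref{b47}: first use the strong Markov property at $T_a$ to factor out $\mathbb{P}_x(T_a<T_b\wedge T_c)$, and then compute the remaining conditional expectation by setting up and solving a $2\times 2$ linear system.

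For the first step, I note that $L_{T_a}^a=0$ since the local time at $a$ is supported on the visits to $a$, so $L_{T_c}^a = L_{T_c-T_a}^a \circ \theta_{T_a}$, and on $\{T_a<T_b\wedge T_c\}$ the event $\{T_c<T_b\}$ translates, after shifting by $T_a$, into the event that the process starting from $a$ hits $c$ before $b$. The strong Markov property at $T_a$ therefore gives
\begin{align*}
\mathbb{P}_x\left[e^{-\lambda_aL_{T_c}^a};\ T_a<T_c<T_b\right]=\mathbb{P}_x(T_a<T_b\wedge T_c)\cdot \alpha,
\end{align*}
where I denote $\alpha:=\mathbb{P}_a\left[e^{-\lambda_aL_{T_c}^a};\ T_c<T_b\right]$.

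To compute $\alpha$, I introduce the companion $\beta:=\mathbb{P}_a\left[e^{-\lambda_aL_{T_b}^a};\ T_b<T_c\right]$ and split $\mathbb{P}_a\left[e^{-\lambda_aL_{T_c}^a}\right]$ and $\mathbb{P}_a\left[e^{-\lambda_aL_{T_b}^a}\right]$ according to whether $T_b$ or $T_c$ is hit first. On $\{T_b<T_c\}$, the strong Markov property at $T_b$ combined with the additivity $L_{T_c}^a=L_{T_b}^a+L_{T_c}^a\circ\theta_{T_b}$ shows that the factor picked up after $T_b$ has expectation $\mathbb{P}_b[e^{-\lambda_aL_{T_c}^a}]$; symmetrically, on $\{T_c<T_b\}$ the strong Markov property at $T_c$ produces the factor $\mathbb{P}_c[e^{-\lambda_aL_{T_b}^a}]$. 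This yields the system
\begin{align*}
\mathbb{P}_a[e^{-\lambda_aL_{T_c}^a}]&=\alpha+\beta\cdot \mathbb{P}_b[e^{-\lambda_aL_{T_c}^a}],\\
\mathbb{P}_a[e^{-\lambda_aL_{T_b}^a}]&=\beta+\alpha\cdot \mathbb{P}_c[e^{-\lambda_aL_{T_b}^a}],
\end{align*}
which I solve by eliminating $\beta$ to obtain
\begin{align*}
\alpha=\frac{\mathbb{P}_a[e^{-\lambda_aL_{T_c}^a}]-\mathbb{P}_a[e^{-\lambda_aL_{T_b}^a}]\mathbb{P}_b[e^{-\lambda_aL_{T_c}^a}]}{1-\mathbb{P}_c[e^{-\lambda_aL_{T_b}^a}]\mathbb{P}_b[e^{-\lambda_aL_{T_c}^a}]}.
\end{align*}
Multiplying through by $\mathbb{P}_x(T_a<T_b\wedge T_c)$ then gives (\ref{d3}).

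For the closing ``only by $h$'' assertion, Proposition \ref{b44} (applied with $f(u)=e^{-\lambda_a u}$ and after translating the reference point to the origin) reduces every factor $\mathbb{P}_y[e^{-\lambda_aL_{T_z}^y}]$ to $1/(1+\lambda_a h^B(z-y))$, while Proposition \ref{b47} together with (\ref{b36}) expresses $\mathbb{P}_x(T_a<T_b\wedge T_c)$ purely in terms of $h$. The only delicacy in the proof is the measurability bookkeeping at the strong-Markov steps, namely verifying that $\{T_b<T_c\}$ and $\{T_c<T_b\}$ are $\F_{T_b\wedge T_c}$-measurable and that the additivity of $L^a$ at the relevant stopping times is valid; both are standard consequences of the continuity of $L^a$ and of the a.s.\ finiteness of $T_b$ and $T_c$ under recurrence.
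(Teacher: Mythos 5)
Your proof is correct and follows essentially the same route as the paper's: the strong Markov property at $T_a$ extracts the factor $\mathbb{P}_x(T_a<T_b\wedge T_c)$, and your first-passage decomposition of $\mathbb{P}_a[e^{-\lambda_aL_{T_c}^a}]$ and $\mathbb{P}_a[e^{-\lambda_aL_{T_b}^a}]$ is exactly the paper's equations (\ref{d5})--(\ref{d6}), solved by the same elimination. The only slight imprecision is in your closing remark: the factors $\mathbb{P}_b[e^{-\lambda_aL_{T_c}^a}]$ and $\mathbb{P}_c[e^{-\lambda_aL_{T_b}^a}]$ have starting point different from the level at which the local time is taken, so by (\ref{b44}) they equal $\mathbb{P}_b(T_a>T_c)+\mathbb{P}_b(T_a<T_c)/(1+\lambda_ah^B(c-a))$ (the paper's (\ref{d11})) rather than $1/(1+\lambda_ah^B(\cdot))$, though this is still a function of $h$ alone, so the conclusion stands.
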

\begin{proof}
By the strong Markov property at $T_a$, we have
\begin{align*}
\label{d4}
\mathbb{P}_x\left[e^{-\lambda_aL_{T_c}^a};\ T_a<T_c<T_b\right]
=\mathbb{P}_x(T_a<T_b\wedge T_c)\mathbb{P}_a\left[e^{-\lambda_aL_{T_c}^a};\ T_c<T_b\right].
\stepcounter{equation}\tag{\theequation} 
\end{align*}
By the strong Markov property at $T_b$, we have
\begin{align*}
\label{d5}
\mathbb{P}_a\left[e^{-\lambda_aL_{T_c}^a};\ T_b<T_c\right]&=\mathbb{P}_a\left[1_{\{T_b<T_c\}}\mathbb{P}_a\left[e^{-\lambda_a( L_{T_c-T_b+T_b}^a-L_{T_b}^a+L_{T_b}^a)}\Big|\F_{T_b}\right]\right]\\
&=\mathbb{P}_a\left[1_{\{T_b<T_c\}}e^{-\lambda_a L_{T_b}^a}\mathbb{P}_a\left[e^{-\lambda_a L_{t}^a\circ \theta_s}\Big|\F_{s}\right]_{t=T_c\circ \theta_s,\ s=T_b}\right]\\
&=\mathbb{P}_a\left[e^{-\lambda_a L_{T_b}^a};\ T_b<T_c\right]\mathbb{P}_b\left[e^{-\lambda_a L_{T_c}^a}\right].
\stepcounter{equation}\tag{\theequation} 
\end{align*}
Similarly, we have
\begin{align*}
\label{d6}
\mathbb{P}_a\left[e^{-\lambda_a L_{T_b}^a};\ T_c<T_b\right]&=\mathbb{P}_a\left[e^{-\lambda_aL_{T_c}^a};\ T_c<T_b\right]\mathbb{P}_c\left[e^{-\lambda_a L_{T_b}^a}\right].
\stepcounter{equation}\tag{\theequation}
\end{align*}
Thus, we have by (\ref{d5}) and (\ref{d6}),
\begin{align*}
\label{d7}
&\mathbb{P}_a\left[e^{-\lambda_aL_{T_c}^a};\ T_c<T_b\right]\\
&=\mathbb{P}_a\left[e^{-\lambda_aL_{T_c}^a}\right]-\mathbb{P}_a\left[e^{-\lambda_aL_{T_c}^a};\ T_b<T_c\right]\\
&=\mathbb{P}_a\left[e^{-\lambda_aL_{T_c}^a}\right]-\mathbb{P}_a\left[e^{-\lambda_a L_{T_b}^a};\ T_b<T_c\right]\mathbb{P}_b\left[e^{-\lambda_a L_{T_c}^a}\right]\\
&=\mathbb{P}_a\left[e^{-\lambda_aL_{T_c}^a}\right]-\left(\mathbb{P}_a\left[e^{-\lambda_a L_{T_b}^a}\right]-\mathbb{P}_a\left[e^{-\lambda_a L_{T_b}^a};\ T_c<T_b\right]\right)\mathbb{P}_b\left[e^{-\lambda_a L_{T_c}^a}\right]\\
&=\mathbb{P}_a\left[e^{-\lambda_aL_{T_c}^a}\right]-\mathbb{P}_a\left[e^{-\lambda_a L_{T_b}^a}\right]\mathbb{P}_b\left[e^{-\lambda_a L_{T_c}^a}\right]+\mathbb{P}_a\left[e^{-\lambda_aL_{T_c}^a};\ T_c<T_b\right]\mathbb{P}_c\left[e^{-\lambda_a L_{T_b}^a}\right]\mathbb{P}_b\left[e^{-\lambda_a L_{T_c}^a}\right].
\stepcounter{equation}\tag{\theequation}
\end{align*}
Therefore, solving the equation (\ref{d7}), we have
\begin{align}
\label{d8}
\mathbb{P}_a\left[e^{-\lambda_aL_{T_c}^a};\ T_c<T_b\right]=\frac{\displaystyle \mathbb{P}_a\left[e^{-\lambda_aL_{T_c}^a}\right]-\mathbb{P}_a\left[e^{-\lambda_a L_{T_b}^a}\right]\mathbb{P}_b\left[e^{-\lambda_a L_{T_c}^a}\right]}{\displaystyle 1-\mathbb{P}_c\left[e^{-\lambda_a L_{T_b}^a}\right]\mathbb{P}_b\left[e^{-\lambda_a L_{T_c}^a}\right]}.
\end{align}
Summarizing the above calculation, we obtain the equation (\ref{d3}).

Finally, since
\begin{align}
\label{d10}
 \mathbb{P}_a\left[e^{-\lambda_aL_{T_c}^a}\right]=\frac{1}{1+\lambda_a h^B(c-a)}
\end{align}
by (\ref{b44}) and (\ref{b36}), and since
\begin{align*}
\label{d11}
\mathbb{P}_b\left[e^{-\lambda_a L_{T_c}^a}\right]&=\mathbb{P}_{b-a}(T_0>T_{c-a})+\frac{\mathbb{P}_{b-a}(T_0<T_{c-a})}{h^B(c-a)}\int_0^\infty e^{-\frac{u}{h^B(c-a)}}e^{-\lambda_au}du\\
&=\mathbb{P}_b(T_a>T_c)+\frac{\mathbb{P}_b(T_a<T_c)}{1+\lambda_ah^B(c-a)}\\
&=\frac{1+\lambda_a\{h(a-c)+h(b-a)-h(b-c)\}}{1+\lambda_a h^B(c-a)},
\stepcounter{equation}\tag{\theequation}
\end{align*}
by (\ref{b44}) and (\ref{b36}), (\ref{d3}) can be represented only by $h$.
\end{proof}

\begin{lem}
\label{d12}
For $x\in \R,$ for distinct points $a,b,c\in \R,\ $and for constants $\lambda_a,\lambda_b>0$, it holds that
\begin{align*}
\label{d13}
&\mathbb{P}_x\left[\Gamma_{T_c};\ T_a<T_b<T_c\right]\\
&=\mathbb{P}_x(T_a<T_b\wedge T_c)\mathbb{P}_a\left[e^{-\lambda_a L_{T_b}^a};\ T_b<T_c\right]\\
&\qquad \times \frac{\displaystyle \mathbb{P}_b\left[e^{-\lambda_b L_{T_c}^b};\ T_c<T_a\right]+\mathbb{P}_b\left[e^{-\lambda_bL_{T_a}^b};\ T_a<T_c\right]\mathbb{P}_a\left[e^{-\lambda_a L_{T_c}^a};\ T_c<T_b\right]}{\displaystyle 1-\mathbb{P}_b\left[e^{-\lambda_bL_{T_a}^b};\ T_a<T_c\right]\mathbb{P}_a\left[e^{-\lambda_aL_{T_b}^a};\ T_b<T_c\right]}.
\stepcounter{equation}\tag{\theequation} 
\end{align*}
In particular, it can be represented only by $h$.
\end{lem}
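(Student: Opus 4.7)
The plan is to reduce the expectation on the event $\{T_a<T_b<T_c\}$ to a product of simpler factors by two strong Markov applications, and then compute the remaining piece $\mathbb{P}_b[\Gamma_{T_c}]$ by setting up a linear system analogous to the one used for the exponential clock in (\ref{c21})--(\ref{c24}) and in the proof of Lemma \ref{d2}.

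First, I would apply the strong Markov property at $T_a$ on $\{T_a<T_b\wedge T_c\}$ to pull out the factor $\mathbb{P}_x(T_a<T_b\wedge T_c)$ and leave $\mathbb{P}_a[\Gamma_{T_c};\,T_b<T_c]$. Next, I would apply the strong Markov property at $T_b$ on $\{T_b<T_c\}$ under $\mathbb{P}_a$. The key observation is that, since $b$ is regular for itself under assumption \textbf{(A)}, the continuous local time $L^b$ satisfies $L_{T_b}^b=0$ almost surely, so no factor of $\lambda_b$ appears from the pre-$T_b$ portion. This gives
\begin{align*}
\mathbb{P}_x[\Gamma_{T_c};\,T_a<T_b<T_c]=\mathbb{P}_x(T_a<T_b\wedge T_c)\cdot\mathbb{P}_a\left[e^{-\lambda_a L_{T_b}^a};\,T_b<T_c\right]\cdot\mathbb{P}_b[\Gamma_{T_c}].
\end{align*}

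Second, to evaluate $P_b:=\mathbb{P}_b[\Gamma_{T_c}]$, I would introduce the auxiliary quantity $P_a:=\mathbb{P}_a[\Gamma_{T_c}]$ and split each expectation according to which of the two marked points is reached first. Using the strong Markov property at $T_a$ (resp.\ $T_b$), together with the vanishing $L_{T_a}^a=L_{T_b}^b=0$ at first passage, I obtain the linear system
\begin{align*}
P_b&=\mathbb{P}_b\left[e^{-\lambda_b L_{T_c}^b};\,T_c<T_a\right]+\mathbb{P}_b\left[e^{-\lambda_b L_{T_a}^b};\,T_a<T_c\right]P_a,\\
P_a&=\mathbb{P}_a\left[e^{-\lambda_a L_{T_c}^a};\,T_c<T_b\right]+\mathbb{P}_a\left[e^{-\lambda_a L_{T_b}^a};\,T_b<T_c\right]P_b.
\end{align*}
Substituting the second equation into the first and solving for $P_b$ produces exactly the ratio $N/D$ appearing in (\ref{d13}), completing the main identity.

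Finally, for the claim that the resulting expression depends only on $h$, I would invoke Proposition \ref{b47} for $\mathbb{P}_x(T_a<T_b\wedge T_c)$, Proposition \ref{b32} for the hitting probabilities $\mathbb{P}_\cdot(T_\cdot<T_\cdot)$, and the same derivation as in Lemma \ref{d2} (applied symmetrically at each pair of points) for the conditional Laplace transforms $\mathbb{P}_\cdot[e^{-\lambda_\cdot L_{T_\cdot}^\cdot};\,T_\cdot<T_\cdot]$; all these are explicit rational expressions in values of $h$. The main obstacle is bookkeeping rather than conceptual: one must carefully track the cancellations when substituting $P_a$ into $P_b$ and align the result with the stated numerator and denominator. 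The genuine probabilistic input is just the two strong Markov reductions together with $L_{T_b}^b=0$ at first passage.
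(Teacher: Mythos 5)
Your proposal is correct and follows essentially the same route as the paper: strong Markov at $T_a$ and then at $T_b$ to factor out $\mathbb{P}_x(T_a<T_b\wedge T_c)\,\mathbb{P}_a[e^{-\lambda_a L_{T_b}^a};\ T_b<T_c]\,\mathbb{P}_b[\Gamma_{T_c}]$, followed by the same two-equation linear system in $\mathbb{P}_a[\Gamma_{T_c}]$ and $\mathbb{P}_b[\Gamma_{T_c}]$ solved for the latter, and the reduction to $h$ via the formulas established in Lemma \ref{d2}. Your explicit remark that $L^b_{T_b}=0$ (so no $\lambda_b$ factor arises before $T_b$) is a point the paper leaves implicit, but the argument is the same.
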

\begin{proof}
By the strong Markov property at $T_a$ and $T_b$, we have 
\begin{align*}
\label{d14}
\mathbb{P}_x\left[\Gamma_{T_c};\ T_a<T_b<T_c\right]&=\mathbb{P}_x(T_a<T_b\wedge T_c)\mathbb{P}_a\left[\Gamma_{T_c};\ T_b<T_c\right]\\
&=\mathbb{P}_x(T_a<T_b\wedge T_c)\mathbb{P}_a\left[e^{-\lambda_a L_{T_b}^a};\ T_b<T_c\right]\mathbb{P}_b\left[\Gamma_{T_c}\right].
\stepcounter{equation}\tag{\theequation}
\end{align*}
By the strong Markov property, we have
\begin{align*}
\label{d15}
\mathbb{P}_b\left[\Gamma_{T_c}\right]&=\mathbb{P}_b\left[e^{-\lambda_b L_{T_c}^b};\ T_c<T_a\right]+\mathbb{P}_b\left[\Gamma_{T_c};\ T_a<T_c\right]\\
&=\mathbb{P}_b\left[e^{-\lambda_b L_{T_c}^b};\ T_c<T_a\right]+\mathbb{P}_b\left[e^{-\lambda_bL_{T_a}^b};\ T_a<T_c\right]\mathbb{P}_a\left[\Gamma_{T_c}\right]\\
&=\mathbb{P}_b\left[e^{-\lambda_b L_{T_c}^b};\ T_c<T_a\right]+\mathbb{P}_b\left[e^{-\lambda_bL_{T_a}^b};\ T_a<T_c\right]\\
&\qquad \times \left(\mathbb{P}_a\left[e^{-\lambda_a L_{T_c}^a};\ T_c<T_b\right]+\mathbb{P}_a\left[e^{-\lambda_aL_{T_b}^a};\ T_b<T_c\right]\mathbb{P}_b\left[\Gamma_{T_c}\right]\right).
\stepcounter{equation}\tag{\theequation}
\end{align*}
Thus, solving the equation (\ref{d15}), we have
\begin{align*}
\label{d16}
\mathbb{P}_b\left[\Gamma_{T_c}\right]=\frac{\displaystyle \mathbb{P}_b\left[e^{-\lambda_b L_{T_c}^b};\ T_c<T_a\right]+\mathbb{P}_b\left[e^{-\lambda_bL_{T_a}^b};\ T_a<T_c\right]\mathbb{P}_a\left[e^{-\lambda_a L_{T_c}^a};\ T_c<T_b\right]}{\displaystyle 1-\mathbb{P}_b\left[e^{-\lambda_bL_{T_a}^b};\ T_a<T_c\right]\mathbb{P}_a\left[e^{-\lambda_aL_{T_b}^a};\ T_b<T_c\right]}.
\stepcounter{equation}\tag{\theequation}
\end{align*}
Summarizing the above calculation, we obtain the equation (\ref{d13}).

Finally, from (\ref{d8}), (\ref{d10}), and (\ref{d11}), (\ref{d13}) can be represented only by $h$.
\end{proof}

Let us proceed to the proof of Proposition \ref{d38}.

\begin{proof}[Proof of Proposition \ref{d38}]
First, we have
\begin{align*}
\label{d1}
\mathbb{P}_x\left[\Gamma_{T_c}\right]&=\mathbb{P}_x(T_c<T_a\wedge T_b)\\
&\qquad +\mathbb{P}_x\left[e^{-\lambda_aL_{T_c}^a};\ T_a<T_c<T_b\right]+\mathbb{P}_x\left[e^{-\lambda_bL_{T_c}^b};\ T_b<T_c<T_a\right]\\
&\qquad +\mathbb{P}_x\left[\Gamma_{T_c};\ T_a<T_b<T_c\right]+\mathbb{P}_x\left[\Gamma_{T_c};\ T_b<T_a<T_c\right].
\stepcounter{equation}\tag{\theequation} 
\end{align*}
Therefore, from (\ref{b49}), (\ref{d3}), (\ref{d13}), and (\ref{d1}), we complete the calculation of the expectation $\mathbb{P}_x\left[\Gamma_{T_c}\right]$.

Next, we consider the limit of $h^B(c)\mathbb{P}_{x}\left[\Gamma_{T_c}\right]$ as $c\to \pm \infty$. By (\ref{b49}), we have
\begin{align*}
\label{d21}
\lim_{c\to \pm \infty}h^B(c)\mathbb{P}_{x}(T_c<T_a\wedge T_b)&=\lim_{c\to \pm \infty}h^B(c)\cdot \frac{\mathbb{P}_x(T_c<T_a)-\mathbb{P}_x(T_b<T_a)\mathbb{P}_b(T_c<T_a)}{1-\mathbb{P}_c(T_b<T_a)\mathbb{P}_b(T_c<T_a)}\\
&=h^{(\pm 1)}(x-a)-\mathbb{P}_{x}(T_b<T_a)h^{(\pm 1)}(b-a),
\stepcounter{equation}\tag{\theequation}
\end{align*}
since
\begin{align*}
\label{d22}
\lim_{c\to \pm \infty}h^B(c)\mathbb{P}_b(T_c<T_a)&=\lim_{c\to \pm \infty}\frac{\{h(a-c)-h(b-c)\}+h(b-a)}{\frac{h^B(c-a)}{h^B(c)}}=h^{(\pm 1)}(b-a)
\stepcounter{equation}\tag{\theequation}
\end{align*}
by (\ref{b36}) and (\ref{b29-1}), and
\begin{align*}
\label{d23}
\lim_{c\to \pm \infty}\mathbb{P}_c(T_b<T_a)&=\lim_{c\to \pm \infty}\frac{h(a-b)+h(c-a)-h(c-b)}{h^B(a-b)}=\frac{h^{(\mp 1)}(a-b)}{h^B(a-b)}
\stepcounter{equation}\tag{\theequation}
\end{align*}
by (\ref{b36}) and (\ref{b29-1}). By (\ref{d3}), we have
\begin{align*}
\label{d27}
&\displaystyle\lim_{c\to \pm \infty}h^B(c)\mathbb{P}_x\left[e^{-\lambda_aL_{T_c}^a};\ T_a<T_c<T_b\right]\\
&=\displaystyle\lim_{c\to \pm \infty}h^B(c)\mathbb{P}_x(T_a<T_b\wedge T_c)\cdot \frac{\displaystyle \mathbb{P}_a\left[e^{-\lambda_aL_{T_c}^a}\right]-\mathbb{P}_a\left[e^{-\lambda_a L_{T_b}^a}\right]\mathbb{P}_b\left[e^{-\lambda_a L_{T_c}^a}\right]}{\displaystyle 1-\mathbb{P}_c\left[e^{-\lambda_a L_{T_b}^a}\right]\mathbb{P}_b\left[e^{-\lambda_a L_{T_c}^a}\right]}\\
&=\mathbb{P}_x(T_a<T_b)\cdot \frac{h^{(\pm 1)}(a-b)}{1+\lambda_ah^B(b-a)},
\stepcounter{equation}\tag{\theequation}
\end{align*}
since 
\begin{align*}
\label{d28}
\lim_{c\to \pm \infty}h^B(c)\mathbb{P}_a\left[e^{-\lambda_aL_{T_c}^a}\right]=\lim_{c\to \pm \infty}\frac{1}{\frac{1}{h^B(c)}+\lambda_a \frac{h^B(c-a)}{h^B(c)}}=\frac{1}{\lambda_a}
\stepcounter{equation}\tag{\theequation}
\end{align*}
by (\ref{d10}) and (\ref{b38}),
\begin{align*}
\label{d29}
\lim_{c\to \pm \infty}h^B(c)\mathbb{P}_b\left[e^{-\lambda_a L_{T_c}^a}\right]&=\lim_{c\to \pm \infty}\frac{1+\lambda_a\{[h(a-c)-h(b-c)]+h(b-a)\}}{\frac{1}{h^B(c)}+\lambda_a \frac{h^B(c-a)}{h^B(c)}}\\
&=\frac{1}{\lambda_a}+h^{(\pm 1)}(b-a)
\stepcounter{equation}\tag{\theequation}
\end{align*}
by (\ref{d11}), (\ref{b29-1}), and (\ref{b38}), and 
\begin{align*}
\label{d30}
\lim_{c\to \pm \infty}\mathbb{P}_c\left[e^{-\lambda_a L_{T_b}^a}\right]&=\lim_{c\to \pm \infty}\frac{1+\lambda_a\{h(a-b)+[h(c-a)-h(c-b)]\}}{1+\lambda_a h^B(b-a)}\\
&=\frac{1+\lambda_a h^{(\mp 1)}(a-b)}{1+\lambda_ah^B(b-a)}
\stepcounter{equation}\tag{\theequation}
\end{align*}
by (\ref{d11}) and (\ref{b29-1}). By (\ref{d13}), we have
\begin{align*}
\label{d34}
&\lim_{c\to \pm \infty}h^B(c)\mathbb{P}_{x}\left[\Gamma_{T_c};\ T_a<T_b<T_c\right]\\
&=\lim_{c\to \pm \infty}h^B(c)\mathbb{P}_x(T_a<T_b\wedge T_c)\mathbb{P}_a\left[e^{-\lambda_a L_{T_b}^a};\ T_b<T_c\right]\\
&\qquad \times \frac{\displaystyle \mathbb{P}_b\left[e^{-\lambda_b L_{T_c}^b};\ T_c<T_a\right]+\mathbb{P}_b\left[e^{-\lambda_bL_{T_a}^b};\ T_a<T_c\right]\mathbb{P}_a\left[e^{-\lambda_a L_{T_c}^a};\ T_c<T_b\right]}{\displaystyle 1-\mathbb{P}_b\left[e^{-\lambda_bL_{T_a}^b};\ T_a<T_c\right]\mathbb{P}_a\left[e^{-\lambda_aL_{T_b}^a};\ T_b<T_c\right]}\\
&=\mathbb{P}_{x}(T_a<T_b)\cdot \frac{1}{1+\lambda_ah^B(b-a)}\cdot\frac{1+\lambda_ah^{(\pm 1)}(b-a)}{\lambda_a\lambda_b h^B(b-a)+\lambda_a+\lambda_b},
\stepcounter{equation}\tag{\theequation} 
\end{align*}
since
\begin{align*}
\label{d35}
&\lim_{c\to \pm\infty}h^B(c)\mathbb{P}_a\left[e^{-\lambda_aL_{T_c}^a};\ T_c<T_b\right]\\
&=\lim_{c\to \pm \infty}\frac{\displaystyle h^B(c)\mathbb{P}_a\left[e^{-\lambda_aL_{T_c}^a}\right]-\mathbb{P}_a\left[e^{-\lambda_a L_{T_b}^a}\right]h^B(c)\mathbb{P}_b\left[e^{-\lambda_a L_{T_c}^a}\right]}{\displaystyle 1-\mathbb{P}_c\left[e^{-\lambda_a L_{T_b}^a}\right]\mathbb{P}_b\left[e^{-\lambda_a L_{T_c}^a}\right]}=\frac{h^{(\pm 1)}(a-b)}{1+\lambda_ah^B(b-a)}
\stepcounter{equation}\tag{\theequation}
\end{align*}
by (\ref{d8}), (\ref{d29}), (\ref{d30}), and (\ref{d34}), and
\begin{align*}
\label{d36}
\lim_{c\to \pm \infty}\mathbb{P}_a\left[e^{-\lambda_aL_{T_b}^a};\ T_b<T_c\right]=\mathbb{P}_a\left[e^{-\lambda_a L_{T_b}^a}\right]=\frac{1}{1+\lambda_a h^B(b-a)}
\stepcounter{equation}\tag{\theequation}
\end{align*}
by (\ref{d10}). Therefore, the assertion (\ref{d40}) follows from (\ref{d1}), (\ref{d21}), (\ref{d27}), and (\ref{d34}).

Finally, we show the $L^1(\P_x)$-convergence. By the subadditivity of $h$, we have
\begin{align}
\label{d51}
h(a-c)+h(X_t-a)-h(X_t-c)\le h(X_t-a)+h(a-X_t).
\end{align}
The right-hand side belongs to $L^1(\mathbb{P}_x).$ See, e.g., proof of Theorem 15.2 of \cite{Tukada}. Thus, by the dominated convergence theorem and by (\ref{d22}),
\begin{align}
\label{d53}
\lim_{c\to \pm \infty}h^B(c)\mathbb{P}_{X_t}(T_c<T_a)=h^{(\pm 1)}(X_t-a)\ \mathrm{in}\ L^1(\mathbb{P}_x).
\end{align}
Therefore, from this, we also obtain (\ref{d40-1}).
\end{proof}

We define
\begin{align}
\label{d42}
N_{a,b,t}^{c,\lambda_a,\lambda_b}:&=h^B(c)\displaystyle \mathbb{P}_{x}\left[\Gamma_{T_c};\ t<T_c|\F_t\right],\\
\label{d43}
M_{a,b,t}^{c,\lambda_a,\lambda_b}:&=h^B(c)\mathbb{P}_{x}\left[\Gamma_{T_c}|\F_t\right]
\end{align}
for distinct points $a,b,c\in \R$.

\begin{thm}
\label{d45}
Let $x\in \R$. Then $(M_{a,b,t}^{(\pm 1),\lambda_a,\lambda_b},\ t\ge 0)$ is a non-negative $((\F_t),\mathbb{P}_x)$-martingale, and it holds that 
\begin{align}
\label{d46}
\lim_{c\to \pm \infty}N_{a,b,t}^{c,\lambda_a,\lambda_b}=\lim_{c\to \pm \infty}M_{a,b,t}^{c,\lambda_a,\lambda_b}=M_{a,b,t}^{(\pm 1),\lambda_a,\lambda_b}\ \mathrm{a.s.\ and\ in}\ L^1(\mathbb{P}_x).
\end{align}
Consequently, if $M_{a,b,0}^{(\pm 1),\lambda_a,\lambda_b}>0$ under $\mathbb{P}_x$, it holds that
\begin{align}
\label{d47}
\lim_{c\to \pm \infty }\frac{\mathbb{P}_x[F_t\cdot \Gamma_{T_c}]}{\mathbb{P}_x[\Gamma_{T_c}]}=\mathbb{P}_x\left[F_t \cdot\frac{M_{a,b,t}^{(\pm 1),\lambda_a,\lambda_b}}{M_{a,b,0}^{(\pm 1),\lambda_a,\lambda_b}}\right]
\end{align}
for all bounded $\F_t$-measurable functionals $F_t$.
\end{thm}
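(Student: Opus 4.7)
The plan is to mirror the proof of Theorem \ref{c46}, replacing the lack-of-memory property of $\bm{e}_q$ by the strong Markov property at $t$ on the event $\{t<T_c\}$ and the normalizer $r_q(0)$ by $h^B(c)$. By the strong Markov property and the additivity of local times, on $\{t<T_c\}$ one has $L_{T_c}^a-L_t^a=L_{T_c\circ\theta_t}^a\circ\theta_t$ and similarly for $b$, whence
\begin{align*}
N_{a,b,t}^{c,\lambda_a,\lambda_b}=h^B(c)\,\Gamma_t\,\mathbb{P}_{X_t}\!\left[\Gamma_{T_c}\right]1_{\{t<T_c\}}.
\end{align*}
Recurrence of $X$ yields $T_c\to\infty$ a.s.\ as $c\to\pm\infty$, so $1_{\{t<T_c\}}\to 1$ a.s. Combining this with (\ref{d40-1}) of Proposition \ref{d38} and the bound
\begin{align*}
\left|N_{a,b,t}^{c,\lambda_a,\lambda_b}-M_{a,b,t}^{(\pm 1),\lambda_a,\lambda_b}\right|\le\left|h^B(c)\mathbb{P}_{X_t}[\Gamma_{T_c}]-\varphi_{a,b}^{(\pm 1),\lambda_a,\lambda_b}(X_t)\right|+\varphi_{a,b}^{(\pm 1),\lambda_a,\lambda_b}(X_t)\,1_{\{T_c\le t\}},
\end{align*}
I would deduce $N_{a,b,t}^{c,\lambda_a,\lambda_b}\to M_{a,b,t}^{(\pm 1),\lambda_a,\lambda_b}$ a.s.\ and in $L^1(\mathbb{P}_x)$; the first term on the right vanishes in $L^1$ by (\ref{d40-1}), and the second by dominated convergence, since $\varphi_{a,b}^{(\pm 1),\lambda_a,\lambda_b}(X_t)\in L^1(\mathbb{P}_x)$ by (\ref{d40-1}).

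The next step is to control the difference $M^c-N^c$. On $\{T_c\le t\}$ the random variable $\Gamma_{T_c}$ is $\F_{T_c}$-measurable and hence $\F_t$-measurable, so
\begin{align*}
M_{a,b,t}^{c,\lambda_a,\lambda_b}-N_{a,b,t}^{c,\lambda_a,\lambda_b}=h^B(c)\,\Gamma_{T_c}\,1_{\{T_c\le t\}},
\end{align*}
which tends to $0$ a.s.\ since $T_c\to\infty$. This is the analogue of the deterministic bound $qr_q(0)\to 0$ used in Theorem \ref{c46}, but here no pathwise uniform bound is available, so the corresponding $L^1$-vanishing is the crux. I would establish it by decomposing $h^B(c)\mathbb{P}_x[\Gamma_{T_c}]=h^B(c)\mathbb{P}_x[\Gamma_{T_c};T_c>t]+h^B(c)\mathbb{P}_x[\Gamma_{T_c};T_c\le t]$, applying the strong Markov property at $t$ to rewrite the first summand as $\mathbb{P}_x[\Gamma_t\cdot h^B(c)\mathbb{P}_{X_t}[\Gamma_{T_c}];T_c>t]$, and passing to the limit via (\ref{d40-1}). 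Together with (\ref{d40}), this shows $h^B(c)\mathbb{P}_x[\Gamma_{T_c};T_c\le t]\to\varphi_{a,b}^{(\pm 1),\lambda_a,\lambda_b}(x)-\mathbb{P}_x[M_{a,b,t}^{(\pm 1),\lambda_a,\lambda_b}]$, so the required vanishing is equivalent to the mean-constancy $\mathbb{P}_x[M_{a,b,t}^{(\pm 1),\lambda_a,\lambda_b}]=\varphi_{a,b}^{(\pm 1),\lambda_a,\lambda_b}(x)$; this is the main obstacle of the argument.

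Once the $L^1$-convergence $M_{a,b,t}^{c,\lambda_a,\lambda_b}\to M_{a,b,t}^{(\pm 1),\lambda_a,\lambda_b}$ is secured, the martingale property of $M^{(\pm 1)}$ follows immediately: each $M^c$ is a closed $(\F_t,\mathbb{P}_x)$-martingale, being the conditional expectation of the integrable random variable $h^B(c)\Gamma_{T_c}$, and the $L^1$-limit of a sequence of martingales is again a martingale. Non-negativity is visible from the explicit formula for $\varphi_{a,b}^{(\pm 1),\lambda_a,\lambda_b}$ together with $\Gamma_t\ge 0$. The penalization identity (\ref{d47}) is then the standard normalization argument: writing $\mathbb{P}_x[F_t\Gamma_{T_c}]/\mathbb{P}_x[\Gamma_{T_c}]=\mathbb{P}_x[F_tM_{a,b,t}^{c,\lambda_a,\lambda_b}]/\mathbb{P}_x[M_{a,b,0}^{c,\lambda_a,\lambda_b}]$, one passes to the limit in the numerator by $L^1$-convergence and the boundedness of $F_t$, and in the denominator by the pointwise convergence of $M^c_{a,b,0}$ together with the assumed positivity of $M_{a,b,0}^{(\pm 1),\lambda_a,\lambda_b}$ under $\mathbb{P}_x$.
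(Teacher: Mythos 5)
Your first step---the identification $N_{a,b,t}^{c,\lambda_a,\lambda_b}=h^B(c)\,\Gamma_t\,\P_{X_t}[\Gamma_{T_c}]\,1_{\{t<T_c\}}$ via the Markov property at $t$, followed by the a.s.\ and $L^1(\P_x)$ convergence to $M_{a,b,t}^{(\pm 1),\lambda_a,\lambda_b}$ using (\ref{d40-1})---is exactly the intended adaptation of the proof of Theorem \ref{c46}, and your domination argument for that piece is fine. The genuine gap is the one you flag yourself: the $L^1$ vanishing of $M_{a,b,t}^{c,\lambda_a,\lambda_b}-N_{a,b,t}^{c,\lambda_a,\lambda_b}=h^B(c)\,\Gamma_{T_c}1_{\{T_c\le t\}}$. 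Your proposed reduction of this to the mean-constancy $\P_x[M_{a,b,t}^{(\pm 1),\lambda_a,\lambda_b}]=\varphi_{a,b}^{(\pm 1),\lambda_a,\lambda_b}(x)$ is circular: that identity, holding for all $x$ and $t$, is (together with the Markov property) precisely the martingale property you are trying to establish, and you offer no independent route to it. You then write ``once the $L^1$-convergence is secured, the martingale property follows,'' which presupposes the very step you left open. As written, the proof does not close.

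What replaces the deterministic bound $qr_q(0)\to 0$ of Theorem \ref{c46} is the estimate $\lim_{c\to\pm\infty}h^B(c)\P_y(T_c\le t)=0$ for each fixed $t\ge 0$ and $y\in\R$; this is the auxiliary lemma of the hitting-time-clock section of Takeda--Yano \cite{TY}, on which the paper's omitted proof implicitly relies. Since $\Gamma_{T_c}\le 1$, it yields $\P_x\bigl[M_{a,b,t}^{c,\lambda_a,\lambda_b}-N_{a,b,t}^{c,\lambda_a,\lambda_b}\bigr]\le h^B(c)\P_x(T_c\le t)\to 0$ directly, with no appeal to mean-constancy. With that in hand, the rest of your argument is correct: each $M_{a,b,\cdot}^{c,\lambda_a,\lambda_b}$ is a closed $((\F_t),\P_x)$-martingale, $L^1$-limits of martingales are martingales, non-negativity passes to the limit, and the passage from (\ref{d46}) to (\ref{d47}) is the standard normalization argument.
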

The proof is almost the same as that of Theorem \ref{c46}, based on (\ref{d40-1}) of Proposition \ref{d38} and so we omit it.

%%%%%%%%%%%%%%%%%%%%%%%%%%%%%%%%%%%%%%%%%%%%%%%%%%%%%%%%%%%%%%%%%%%%%%%%%%%%%%%%%%%%%%%%%%%%%%%%%%%%%%%%%%%%%%%%%%%%%%%%%%%%%%%%%%%%%%%%%%%%%%%%%%%%%%%%%%%%

\section{Two-point Hitting time clock}
\label{S5}
For $-1\le\gamma\le1$, let us find the limit of $h^C(c,-d)\mathbb{P}_x[\Gamma_{T_c\wedge T_{-d}}]$ as $(c,d)\stackrel{(\gamma)}{\to}\infty.$

\begin{prop}
\label{e8}
For distinct points $a,b\in \R$, for constants $\lambda_a,\lambda_b>0,$ and for $-1\le \gamma\le 1$, it holds that
\begin{align}
\label{e9}
\lim_{(c,d)\stackrel{(\gamma)}{\to}\infty}h^C(c,-d)\mathbb{P}_{x}\left[\Gamma_{T_c};\ T_c<T_{-d}\right]&=\frac{1+\gamma}{2}\cdot \varphi_{a,b}^{(+1),\lambda_a,\lambda_b}(x),\\
\label{e10}
\lim_{(c,d)\stackrel{(\gamma)}{\to}\infty}h^C(c,-d)\mathbb{P}_{x}\left[\Gamma_{T_{-d}};\ T_{-d}<T_{c}\right]&=\frac{1-\gamma}{2}\cdot \varphi_{a,b}^{(-1),\lambda_a,\lambda_b}(x),\\
\label{e21}
\lim_{(c,d)\stackrel{(\gamma)}{\to}\infty}h^C(c,-d)\mathbb{P}_{x}\left[\Gamma_{T_c\wedge T_{-d}}\right]&=\varphi_{a,b}^{(\gamma),\lambda_a,\lambda_b}(x),
\end{align}
for all $x\in \R$, and
  \begin{align}
\label{e21.1}
\lim_{(c,d)\stackrel{(\gamma)}{\to}\infty}h^C(c,-d)\mathbb{P}_{X_t}\left[\Gamma_{T_c};\ T_c<T_{-d}\right]&=\frac{1+\gamma}{2}\cdot \varphi_{a,b}^{(+1),\lambda_a,\lambda_b}(X_t)\ \mathrm{a.s.\ and\ in\ }L^1(\P_x),\\
\label{e21.2}
\lim_{(c,d)\stackrel{(\gamma)}{\to}\infty}h^C(c,-d)\mathbb{P}_{X_t}\left[\Gamma_{T_{-d}};\ T_{-d}<T_{c}\right]&=\frac{1-\gamma}{2}\cdot \varphi_{a,b}^{(-1),\lambda_a,\lambda_b}(X_t)\ \mathrm{a.s.\ and\ in\ }L^1(\P_x),\\
\label{e21.3}
\lim_{(c,d)\stackrel{(\gamma)}{\to}\infty}h^C(c,-d)\mathbb{P}_{X_t}\left[\Gamma_{T_c\wedge T_{-d}}\right]&=\varphi_{a,b}^{(\gamma),\lambda_a,\lambda_b}(X_t)\ \mathrm{a.s.\ and\ in\ }L^1(\P_x).
\end{align}
\end{prop}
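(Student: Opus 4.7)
The proof parallels Propositions \ref{c40} and \ref{d38}, now with the additional bookkeeping of distinguishing the side on which $X$ exits. Since
$$\mathbb{P}_x[\Gamma_{T_c\wedge T_{-d}}] = \mathbb{P}_x[\Gamma_{T_c};\,T_c<T_{-d}] + \mathbb{P}_x[\Gamma_{T_{-d}};\,T_{-d}<T_c],$$
claim (\ref{e21}) will follow from (\ref{e9}) and (\ref{e10}) once we verify the linear identity
$$\tfrac{1+\gamma}{2}\,\varphi_{a,b}^{(+1),\lambda_a,\lambda_b}(x) + \tfrac{1-\gamma}{2}\,\varphi_{a,b}^{(-1),\lambda_a,\lambda_b}(x) = \varphi_{a,b}^{(\gamma),\lambda_a,\lambda_b}(x).$$
This is immediate from the explicit formula for $\varphi$ together with the observation $\tfrac{1+\gamma}{2}h^{(+1)}(y) + \tfrac{1-\gamma}{2}h^{(-1)}(y) = h^{(\gamma)}(y)$: in each summand of the formula, $\varphi$ depends on $h^{(\gamma)}$ only through an affine expression whose constant part has total coefficient $\tfrac{1+\gamma}{2}+\tfrac{1-\gamma}{2}=1$. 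Moreover, (\ref{e10}) follows from (\ref{e9}) by the symmetry $c\leftrightarrow -d$, $\gamma\leftrightarrow -\gamma$, $a\leftrightarrow b$ (using $\varphi_{a,b}=\varphi_{b,a}$), so the essential work lies in establishing (\ref{e9}).

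For (\ref{e9}), I would decompose $\mathbb{P}_x[\Gamma_{T_c};T_c<T_{-d}]$ according to the order in which $X$ visits the four distinguished points $a,b,c,-d$, producing the sum
\begin{align*}
&\mathbb{P}_x(T_c<T_a\wedge T_b\wedge T_{-d})\\
&\quad+ \mathbb{P}_x\bigl[e^{-\lambda_a L_{T_c}^a};\,T_a<T_c,\,T_c<T_b\wedge T_{-d}\bigr] + (a\leftrightarrow b)\\
&\quad+ \mathbb{P}_x\bigl[\Gamma_{T_c};\,T_a<T_b<T_c<T_{-d}\bigr] + (a\leftrightarrow b),
\end{align*}
an analogue of (\ref{d1}) with the additional constraint $T_c<T_{-d}$. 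Each summand can be reduced by the strong Markov property at $T_a,T_b,T_{-d}$, iterated and solved as a linear system exactly as in (\ref{c21})--(\ref{c24}), (\ref{d15}), and Lemmas \ref{d2} and \ref{d12}, to a closed-form expression in terms of $h$, $h^B$, hitting probabilities (for which Proposition \ref{b47} handles three-point versions), and the quantities $\mathbb{P}_\cdot[e^{-\lambda L_{T_z}^w};T_z<T_v]$ already computed in Section \ref{S4}.

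With the explicit expression in hand, the limit under $(c,d)\stackrel{(\gamma)}{\to}\infty$ is extracted termwise using: Proposition \ref{b29}(ii), yielding $h(y+c)-h(c)\to y/m^2$ and its $-d$ counterpart; Proposition \ref{b37}, yielding $h^B(c-a)/h^B(c)\to 1$; and the explicit form of $h^C(c,-d)$ from Proposition \ref{b45}, which in particular gives
$$\lim_{(c,d)\stackrel{(\gamma)}{\to}\infty}\frac{h^C(c,-d)}{h^B(c)}=\frac{1+\gamma}{2},\qquad \lim_{(c,d)\stackrel{(\gamma)}{\to}\infty}\frac{h^C(c,-d)}{h^B(d)}=\frac{1-\gamma}{2}.$$
These two ratios are the source of the asymmetry factors $(1\pm\gamma)/2$. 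Comparing term by term with the limits already computed in the proof of Proposition \ref{d38} (where $h^B(c)$ was the normalizer) reproduces $\tfrac{1+\gamma}{2}\varphi^{(+1)}_{a,b}(x)$, which is (\ref{e9}).

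Finally, (\ref{e21.1})--(\ref{e21.3}) follow from the pointwise limits by the same dominated convergence argument used at the end of Proposition \ref{d38}: subadditivity of $h$ (Proposition \ref{b28}(iii)) gives bounds of the form $h(y-c)-h(z-c)\le h(y-z)$ that supply an $L^1(\mathbb{P}_x)$ dominant independent of $c,d$. The main obstacle I anticipate lies in the first stage: there are strictly more orderings of $\{a,b,c,-d\}$ than in Section \ref{S4}, the iterated strong Markov reductions produce longer linear systems, and the asymptotic analysis must extract the constants $(1\pm\gamma)/2$ cleanly from the explicit form of $h^C(c,-d)$ under the $(\gamma)$-scaling -- this is precisely where the computational errors of \cite{TY} arose (cf.\ the footnote after Theorem~1.2).
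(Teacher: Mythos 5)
Your overall architecture (prove (\ref{e9}), get (\ref{e10}) by an analogous argument, obtain (\ref{e21}) by summing via the affine dependence of $h^{(\gamma)}$ on $\gamma$, then upgrade to $L^1$) matches the paper, and the convexity identity $\frac{1+\gamma}{2}\varphi^{(+1)}_{a,b}+\frac{1-\gamma}{2}\varphi^{(-1)}_{a,b}=\varphi^{(\gamma)}_{a,b}$ is exactly what the paper uses implicitly when it ``sums up the first and the second equations.'' For the core step (\ref{e9}), however, you take a genuinely different and much heavier route. The paper never decomposes over the visit orderings of the four points $a,b,c,-d$: it applies the strong Markov property only at $T_c$ and $T_{-d}$ to obtain the single identity
\[
\mathbb{P}_x\left[\Gamma_{T_c};\ T_c<T_{-d}\right]=\frac{\mathbb{P}_x\left[\Gamma_{T_c}\right]-\mathbb{P}_x\left[\Gamma_{T_{-d}}\right]\mathbb{P}_{-d}\left[\Gamma_{T_c}\right]}{1-\mathbb{P}_c\left[\Gamma_{T_{-d}}\right]\mathbb{P}_{-d}\left[\Gamma_{T_c}\right]},
\]
which expresses the two-point-clock quantity entirely through one-point-clock quantities already controlled by Proposition \ref{d38}. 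It then only remains to check $\mathbb{P}_{-d}\left[\Gamma_{T_c}\right]\to 0$ (a short argument via the decomposition (\ref{d1}) and the limits of Section \ref{S4}) and $h^C(c,-d)/h^B(c)\to\frac{1+\gamma}{2}$, whereupon (\ref{e9}) drops out as $\frac{1+\gamma}{2}\lim_c h^B(c)\mathbb{P}_x[\Gamma_{T_c}]$. Your plan instead requires a four-point ordering decomposition, four-point analogues of Proposition \ref{b47} (which, as you note, only covers three target points), and correspondingly larger linear systems; this is feasible in principle but is left entirely unexecuted, and it is precisely the part you yourself flag as the obstacle. The paper's identity removes that obstacle, so I would adopt it. One further caveat: deriving (\ref{e10}) from (\ref{e9}) ``by the symmetry $c\leftrightarrow -d$, $a\leftrightarrow b$'' is not literally available (spatial reflection replaces the \Levy\ process by its dual, and $a,b$ play no role in the $c$ versus $-d$ asymmetry); what is true, and what the paper means by ``similarly,'' is that the same argument runs verbatim with $T_{-d}$ in place of $T_c$, using $h^B(d)/h^C(c,-d)\to 2/(1-\gamma)$ and $\lim_{d\to\infty}h^B(-d)\mathbb{P}_x\left[\Gamma_{T_{-d}}\right]=\varphi^{(-1),\lambda_a,\lambda_b}_{a,b}(x)$.
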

\begin{proof}
First, it hold that for distinct points $a,b,c,d\in \R$ with $c,d>0$ and for constants $\lambda_a,\lambda_b>0$,
\begin{align*}
\label{e4}
\mathbb{P}_x&\left[\Gamma_{T_c};\ T_c<T_{-d}\right]=\frac{\mathbb{P}_x\left[\Gamma_{T_c}\right]-\mathbb{P}_x\left[\Gamma_{T_{-d}}\right]\mathbb{P}_{-d}\left[\Gamma_{T_c}\right]}{1-\mathbb{P}_c\left[\Gamma_{T_{-d}}\right]\mathbb{P}_{-d}\left[\Gamma_{T_c}\right]},
\stepcounter{equation}\tag{\theequation}
\end{align*}
since
\begin{align*}
\label{e5}
\mathbb{P}_x\left[\Gamma_{T_c};\ T_{-d}<T_c\right]&=\mathbb{P}_x\left[\Gamma_{T_{-d}};\ T_{-d}<T_c\right]\mathbb{P}_{-d}\left[\Gamma_{T_c}\right]\\
&=\left\{\mathbb{P}_x\left[\Gamma_{T_{-d}}\right]-\mathbb{P}_x\left[\Gamma_{T_{-d}};\ T_c<T_{-d}\right]\right\}\mathbb{P}_{-d}\left[\Gamma_{T_c}\right]\\
&=\mathbb{P}_x\left[\Gamma_{T_{-d}}\right]\mathbb{P}_{-d}\left[\Gamma_{T_c}\right]-\mathbb{P}_x\left[\Gamma_{T_c};\ T_c<T_{-d}\right]\mathbb{P}_c\left[\Gamma_{T_{-d}}\right]\mathbb{P}_{-d}\left[\Gamma_{T_c}\right],
\stepcounter{equation}\tag{\theequation}
\end{align*}
by the strong Markov property at $T_{-d}$ and $T_c$.

Note that if $(c,d)\stackrel{(\gamma)}{\to}\infty$, then $ \frac{d}{c}\to \frac{1+\gamma}{1-\gamma}$. Thus, we have by (\ref{b29-0}),
\begin{align}
\label{e11}
\frac{h(c-a)}{h(-c-d)}=\frac{h(c-a)}{|c-a|}\cdot\frac{|-c-d|}{h(-c-d)}\cdot \frac{|1-\frac{a}{c}|}{|-1-\frac{d}{c}|}\to \frac{1-\gamma}{2},\\
\label{e12}
\frac{h(-d)}{h(c+d)}=\frac{h(-d)}{|-d|}\cdot\frac{|-c-d|}{h(-c-d)}\cdot \frac{|-\frac{d}{c}|}{|-1-\frac{d}{c}|}\to \frac{1+\gamma}{2}
\end{align}
as $(c,d)\stackrel{(\gamma)}{\to}\infty.$ Thus, we have by (\ref{b46}) and (\ref{b34}),
\begin{align}
\label{e13}
\frac{h^B(c)}{h^C(c,-d)}\to \frac{2}{1+\gamma},\\
\label{e13-1}
\frac{h^B(d)}{h^C(c,-d)}\to \frac{2}{1-\gamma},
\end{align}
as $(c,d)\stackrel{(\gamma)}{\to}\infty.$ By (\ref{d1}), we have 
\begin{align}
\label{e17}
\lim_{(c,d)\stackrel{(\gamma)}{\to}\infty}\mathbb{P}_{-d}\left[\Gamma_{T_c}\right]=0,
\end{align}
since
\begin{align*}
\label{e14}
\lim_{(c,d)\stackrel{(\gamma)}{\to}\infty}\mathbb{P}_{-d}(T_c<T_a\wedge T_b)&=\lim_{(c,d)\stackrel{(\gamma)}{\to}\infty}\frac{\mathbb{P}_{-d}(T_c<T_a)-\mathbb{P}_{-d}(T_b<T_a)\mathbb{P}_b(T_c<T_a)}{1-\mathbb{P}_c(T_b<T_a)\mathbb{P}_b(T_c<T_a)}\\
&=\lim_{(c,d)\stackrel{(\gamma)}{\to}\infty} \mathbb{P}_{-d}(T_c<T_a)\\
&=\lim_{(c,d)\stackrel{(\gamma)}{\to}\infty}\frac{\frac{h(a-c)}{h(-c-d)}+\frac{h(-d-a)}{h(-c-d)}-1}{\frac{h(c-a)}{h(-c-d)}+\frac{h(a-c)}{h(-c-d)}}=0
\stepcounter{equation}\tag{\theequation}
\end{align*}
by (\ref{b49}), (\ref{b36}), (\ref{e11}), and (\ref{e12}),
\begin{align*}
\label{e15}
&\lim_{(c,d)\stackrel{(\gamma)}{\to}\infty}\mathbb{P}_{-d}\left[e^{-\lambda_aL_{T_c}^a};\ T_a<T_c<T_b\right]\\
&=\lim_{(c,d)\stackrel{(\gamma)}{\to}\infty}\mathbb{P}_{-d}(T_a<T_b\wedge T_c)\cdot\frac{\displaystyle \mathbb{P}_a\left[e^{-\lambda_aL_{T_c}^a}\right]-\mathbb{P}_a\left[e^{-\lambda_a L_{T_b}^a}\right]\mathbb{P}_b\left[e^{-\lambda_a L_{T_c}^a}\right]}{\displaystyle 1-\mathbb{P}_c\left[e^{-\lambda_a L_{T_b}^a}\right]\mathbb{P}_b\left[e^{-\lambda_a L_{T_c}^a}\right]}=0
\stepcounter{equation}\tag{\theequation}
\end{align*}
by (\ref{d3}) and (\ref{d30}), and
\begin{align*}
\label{e16}
&\lim_{(c,d)\stackrel{(\gamma)}{\to}\infty}\mathbb{P}_{-d}\left[\Gamma_{T_c};\ T_a<T_b<T_c\right]\\
&=\lim_{(c,d)\stackrel{(\gamma)}{\to}\infty}\mathbb{P}_{-d}(T_a<T_b\wedge T_c)\mathbb{P}_a\left[e^{-\lambda_a L_{T_b}^a};\ T_b<T_c\right]\\
&\qquad \times \frac{\displaystyle \mathbb{P}_b\left[e^{-\lambda_b L_{T_c}^b};\ T_c<T_a\right]+\mathbb{P}_b\left[e^{-\lambda_bL_{T_a}^b};\ T_a<T_c\right]\mathbb{P}_a\left[e^{-\lambda_a L_{T_c}^a};\ T_c<T_b\right]}{\displaystyle 1-\mathbb{P}_b\left[e^{-\lambda_bL_{T_a}^b};\ T_a<T_c\right]\mathbb{P}_a\left[e^{-\lambda_aL_{T_b}^a};\ T_b<T_c\right]}=0
\stepcounter{equation}\tag{\theequation}
\end{align*}
by (\ref{d13}), (\ref{b38}), and (\ref{d35}). Summarizing the above calculation, we obtain by (\ref{e4}) and (\ref{d40}),
\begin{align*}
\label{e18}
&\lim_{(c,d)\stackrel{(\gamma)}{\to}\infty}h^C(c,-d)\mathbb{P}_{x}\left[\Gamma_{T_c};\ T_c<T_{-d}\right]\\
&=\lim_{(c,d)\stackrel{(\gamma)}{\to}\infty}\frac{\frac{h^C(c,-d)}{h^B(c)}\cdot h^B(c)\mathbb{P}_{x}\left[\Gamma_{T_c}\right]-\frac{h^C(c,-d)}{h^B(-d)}\cdot  h^B(-d)\mathbb{P}_x\left[\Gamma_{T_{-d}}\right]\mathbb{P}_{-d}\left[\Gamma_{T_c}\right]}{1-\mathbb{P}_c\left[\Gamma_{T_{-d}}\right]\mathbb{P}_{-d}\left[\Gamma_{T_c}\right]}\\
&=\frac{1+\gamma}{2}\lim_{(c,d)\stackrel{(\gamma)}{\to}\infty} h^B(c)\mathbb{P}_{x}\left[\Gamma_{T_c}\right]=\frac{1+\gamma}{2}\cdot \varphi_{a,b}^{(+1),\lambda_a,\lambda_b}(x).
\stepcounter{equation}\tag{\theequation}
\end{align*}
Hence, we have proved the first equation. The second equation (\ref{e10}) can be shown similarly. The third equation (\ref{e21}) can be proved by summing up the first and the second equations. The remaining equations are clear from (\ref{d40-1}) of Proposition \ref{d38}.
\end{proof}

We define
\begin{align}
\label{e23}
N_{a,b,t}^{1,c,d,\lambda_a,\lambda_b}&:=h^C(c,-d)\mathbb{P}_{x}\left[\Gamma_{T_c\wedge T_{-d}};\ t<T_{c}\wedge T_{-d}|\F_{t}\right],\\
N_{a,b,t}^{2,c,d,\lambda_a,\lambda_b}&:=h^C(c,-d)\mathbb{P}_{x}\left[\Gamma_{T_c};\ t<T_{c}< T_{-d}|\F_{t}\right],\\
N_{a,b,t}^{3,c,d,\lambda_a,\lambda_b}&:=h^C(c,-d)\mathbb{P}_{x}\left[\Gamma_{T_{-d}};\ t<T_{-d}<T_{c}|\F_{t}\right],\\
\label{e24}
M_{a,b,t}^{1,c,d,\lambda_a,\lambda_b}&:=h^C(c,-d)\mathbb{P}_{x}\left[\Gamma_{T_c\wedge T_{-d}}|\F_{t}\right],\\
M_{a,b,t}^{2,c,d,\lambda_a,\lambda_b}&:=h^C(c,-d)\mathbb{P}_{x}\left[\Gamma_{T_c};\ T_c<T_{-d}|\F_{t}\right],\\
M_{a,b,t}^{3,c,d,\lambda_a,\lambda_b}&:=h^C(c,-d)\mathbb{P}_{x}\left[\Gamma_{T_{-d}};\ T_{-d}<T_c|\F_{t}\right]
\end{align}
for $c,d>0$.

\begin{thm}
\label{e26}
Let $x\in \R$ and $-1\le \gamma\le 1.$ Then, $(M_{a,b,t}^{(\gamma),\lambda_a,\lambda_b},\ t\ge 0)$ is a non-negative $((\F_t),\mathbb{P}_x)$-martingale, and it holds that 
\begin{align}
\label{e27}
\lim_{(c,d)\stackrel{(\gamma)}{\to}\infty}N_{a,b,t}^{1,c,d,\lambda_a,\lambda_b}=\lim_{(c,d)\stackrel{(\gamma)}{\to}\infty}M_{a,b,t}^{1,c,d,\lambda_a,\lambda_b}&=M_{a,b,t}^{(\gamma),\lambda_a,\lambda_b}\ \mathrm{a.s.\ and\ in}\ L^1(\mathbb{P}_x),\\
\lim_{(c,d)\stackrel{(\gamma)}{\to}\infty}N_{a,b,t}^{2,c,d,\lambda_a,\lambda_b}=\lim_{(c,d)\stackrel{(\gamma)}{\to}\infty}M_{a,b,t}^{2,c,d,\lambda_a,\lambda_b}&=\frac{1+\gamma}{2}\cdot M_{a,b,t}^{(+1),\lambda_a,\lambda_b}\ \mathrm{a.s.\ and\ in}\ L^1(\mathbb{P}_x),\\
\lim_{(c,d)\stackrel{(\gamma)}{\to}\infty}N_{a,b,t}^{3,c,d,\lambda_a,\lambda_b}=\lim_{(c,d)\stackrel{(\gamma)}{\to}\infty}M_{a,b,t}^{3,c,d,\lambda_a,\lambda_b}&=\frac{1-\gamma}{2}\cdot M_{a,b,t}^{(-1),\lambda_a,\lambda_b}\ \mathrm{a.s.\ and\ in}\ L^1(\mathbb{P}_x).
\end{align}
Consequently, if $M_{a,b,0}^{(\gamma),\lambda_a,\lambda_b}>0$ under $\mathbb{P}_x$, it holds that 
\begin{align}
\label{e28}
\displaystyle \lim_{(c,d)\stackrel{(\gamma)}{\to}\infty}\frac{\mathbb{P}_x[F_t\cdot \Gamma_{T_c\wedge T_{-d}}]}{\mathbb{P}_x[\Gamma_{T_c\wedge T_{-d}}]}= \mathbb{P}_x\left[F_t \cdot\frac{M_{a,b,t}^{(\gamma),\lambda_a,\lambda_b}}{M_{a,b,0}^{(\gamma),\lambda_a,\lambda_b}}\right]
\end{align}
for all bounded $\F_t$-measurable functionals $F_t$.
\end{thm}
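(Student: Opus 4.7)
The overall strategy mirrors the proof of Theorem \ref{c46}: the pointwise and $L^1$ convergence statements proved in Proposition \ref{e8} already give us what we need, and the theorem is mostly a packaging argument using the Markov property and martingale convergence.

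First I would handle the three "pre-hit" quantities $N^{i,c,d,\lambda_a,\lambda_b}_{a,b,t}$. On the event $\{t<T_c\wedge T_{-d}\}$ we have $T_c\wedge T_{-d}=t+(T_c\wedge T_{-d})\circ\theta_t$, so by the simple Markov property at time $t$,
\begin{align*}
N^{1,c,d,\lambda_a,\lambda_b}_{a,b,t}=1_{\{t<T_c\wedge T_{-d}\}}\,\Gamma_t\,h^C(c,-d)\,\mathbb{P}_{X_t}\!\left[\Gamma_{T_c\wedge T_{-d}}\right],
\end{align*}
and analogous decompositions hold for $N^{2,c,d,\lambda_a,\lambda_b}_{a,b,t}$ and $N^{3,c,d,\lambda_a,\lambda_b}_{a,b,t}$ with $\Gamma_{T_c};T_c<T_{-d}$ or $\Gamma_{T_{-d}};T_{-d}<T_c$ on the right. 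Since $T_c\wedge T_{-d}\to\infty$ a.s.\ as $(c,d)\stackrel{(\gamma)}{\to}\infty$ (the path of $X$ stays in a compact set on $[0,t]$), the indicator $1_{\{t<T_c\wedge T_{-d}\}}$ tends to $1$ a.s., and the $L^1(\mathbb{P}_x)$ statements (\ref{e21.1})--(\ref{e21.3}) of Proposition \ref{e8} then give the a.s.\ and $L^1$ convergence of the three $N^{i,c,d,\lambda_a,\lambda_b}_{a,b,t}$ to $M^{(\gamma),\lambda_a,\lambda_b}_{a,b,t}$, $\frac{1+\gamma}{2}M^{(+1),\lambda_a,\lambda_b}_{a,b,t}$ and $\frac{1-\gamma}{2}M^{(-1),\lambda_a,\lambda_b}_{a,b,t}$ respectively (using $\Gamma_t\le 1$ to dominate).

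For the three $M^{i,c,d,\lambda_a,\lambda_b}_{a,b,t}$ it suffices to show that the differences with the corresponding $N^{i,c,d,\lambda_a,\lambda_b}_{a,b,t}$ vanish in $L^1(\mathbb{P}_x)$. For $i=1$ the difference equals $h^C(c,-d)\Gamma_{T_c\wedge T_{-d}}1_{\{T_c\wedge T_{-d}\le t\}}$, which is non-negative, so I can take expectations and use Scheffé's lemma: we have $N^{1,c,d,\lambda_a,\lambda_b}_{a,b,t}\le M^{1,c,d,\lambda_a,\lambda_b}_{a,b,t}$, the $N$ converges in $L^1$ to $M^{(\gamma),\lambda_a,\lambda_b}_{a,b,t}$, and
\begin{align*}
\mathbb{P}_x\!\left[M^{1,c,d,\lambda_a,\lambda_b}_{a,b,t}\right]=h^C(c,-d)\,\mathbb{P}_x\!\left[\Gamma_{T_c\wedge T_{-d}}\right]\xrightarrow{(c,d)\stackrel{(\gamma)}{\to}\infty}\varphi^{(\gamma),\lambda_a,\lambda_b}_{a,b}(x)=\mathbb{P}_x\!\left[M^{(\gamma),\lambda_a,\lambda_b}_{a,b,t}\right]
\end{align*}
by (\ref{e21}) combined with the fact that $M^{(\gamma),\lambda_a,\lambda_b}_{a,b,0}=\varphi^{(\gamma),\lambda_a,\lambda_b}_{a,b}(x)$ (since $L_0^a=L_0^b=0$) and the tower property once the martingale property is established (or, to avoid circularity, taking $t=0$ in Proposition \ref{e8} to read the right-hand side directly). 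Matching $L^1$ norms of non-negative dominating sequences then forces $M^{1,c,d,\lambda_a,\lambda_b}_{a,b,t}-N^{1,c,d,\lambda_a,\lambda_b}_{a,b,t}\to 0$ in $L^1$. The cases $i=2,3$ are handled identically using (\ref{e9}) and (\ref{e10}).

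Once all six $L^1$-convergences are established, the martingale property of $M^{(\gamma),\lambda_a,\lambda_b}_{a,b,t}$ is automatic: each $M^{1,c,d,\lambda_a,\lambda_b}_{a,b,t}$ is by definition a Doob $\mathcal{F}_t$-martingale, and $L^1$-convergence transfers the martingale identity to the limit. Non-negativity is clear. Finally, under the assumption $M^{(\gamma),\lambda_a,\lambda_b}_{a,b,0}>0$, the penalization formula (\ref{e28}) follows from $M^{(\gamma),\lambda_a,\lambda_b}_{a,b,0}=\varphi^{(\gamma),\lambda_a,\lambda_b}_{a,b}(x)$ and the ratio convention as in (\ref{c48}). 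The main technical obstacle is the Scheffé step; the rest is essentially verbatim from the proof of Theorem \ref{c46}.
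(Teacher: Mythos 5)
Your handling of the three $N^{i,c,d,\lambda_a,\lambda_b}_{a,b,t}$ is correct and is exactly the route the paper intends (Markov property at time $t$, the indicator $1_{\{t<T_c\wedge T_{-d}\}}\to 1$ a.s.\ since the path is bounded on $[0,t]$, then the $L^1$-statements (\ref{e21.1})--(\ref{e21.3})). The problem is the Scheff\'e step. To conclude that $M^{1,c,d,\lambda_a,\lambda_b}_{a,b,t}-N^{1,c,d,\lambda_a,\lambda_b}_{a,b,t}\ge 0$ tends to $0$ in $L^1(\mathbb{P}_x)$ you need the two limits of expectations to agree, i.e.\ you need
\begin{align*}
\mathbb{P}_x\left[M^{(\gamma),\lambda_a,\lambda_b}_{a,b,t}\right]=\varphi^{(\gamma),\lambda_a,\lambda_b}_{a,b}(x)\qquad\text{for the fixed }t>0.
\end{align*}
This is the constancy in $t$ of the expectation of the limit process --- essentially the martingale property that you then propose to deduce \emph{from} the $L^1$-convergence, so the argument as written is circular. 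Your parenthetical escape (``taking $t=0$ in Proposition \ref{e8}'') does not resolve it: at $t=0$ the proposition merely restates (\ref{e21}) and says nothing about $\mathbb{P}_x[\varphi^{(\gamma),\lambda_a,\lambda_b}_{a,b}(X_t)\Gamma_t]$ for $t>0$. Without further input your comparison only yields the one-sided bound $\mathbb{P}_x[M^{(\gamma),\lambda_a,\lambda_b}_{a,b,t}]\le\varphi^{(\gamma),\lambda_a,\lambda_b}_{a,b}(x)$, i.e.\ a supermartingale.

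The gap is fixable, but the missing input must be named. Since $h^{(\gamma)}=\frac{1+\gamma}{2}h^{(+1)}+\frac{1-\gamma}{2}h^{(-1)}$ by (\ref{d18}) and $\varphi^{(\gamma),\lambda_a,\lambda_b}_{a,b}$ depends on $\gamma$ only through the values $h^{(\gamma)}(\cdot)$, affinely (see (\ref{c41})), one has $\varphi^{(\gamma),\lambda_a,\lambda_b}_{a,b}=\frac{1+\gamma}{2}\varphi^{(+1),\lambda_a,\lambda_b}_{a,b}+\frac{1-\gamma}{2}\varphi^{(-1),\lambda_a,\lambda_b}_{a,b}$ and hence $M^{(\gamma),\lambda_a,\lambda_b}_{a,b,t}=\frac{1+\gamma}{2}M^{(+1),\lambda_a,\lambda_b}_{a,b,t}+\frac{1-\gamma}{2}M^{(-1),\lambda_a,\lambda_b}_{a,b,t}$; the two extreme processes are martingales by Theorem \ref{d45}, so $M^{(\gamma),\lambda_a,\lambda_b}_{a,b,t}$ is a martingale and in particular $\mathbb{P}_x[M^{(\gamma),\lambda_a,\lambda_b}_{a,b,t}]=\varphi^{(\gamma),\lambda_a,\lambda_b}_{a,b}(x)$. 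Establishing this \emph{first}, your Scheff\'e comparison then closes for $i=1$, and for $i=2,3$ using (\ref{e9})--(\ref{e10}) in the same way; the transfer of the martingale identity through $L^1$-limits, the non-negativity, and the deduction of (\ref{e28}) are then all fine. (An alternative, non-circular route would be to show directly that $h^C(c,-d)\,\mathbb{P}_x[\Gamma_{T_c\wedge T_{-d}};\ T_c\wedge T_{-d}\le t]\to 0$, but that requires an estimate of the type $h^B(c)\mathbb{P}_x(T_c\le t)\to 0$, which is not among the facts you invoke.)
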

The proof is almost the same as that of Theorem \ref{c46}, based on Proposition \ref{e8} and so we omit it.

%%%%%%%%%%%%%%%%%%%%%%%%%%%%%%%%%%%%%%%%%%%%%%%%%%%%%%%%%%%%%%%%%%%%%%%%%%%%%%%%%%%%%%%%%%%%%%%%%%%%%%%%%%%%%%%%%%%%%%%%%%%%%%%%%%%%%%%%%%%%%%%%%%%%%%%%%%%%%%%%%%%%%%%%%%%%%%%%%%%%

\section{Inverse local time clock}
\label{S6}
Let us find the limit of $h^B(c)\mathbb{P}_{x}\left[\Gamma_{\eta_u^c}\right]$ as $c\to \pm \infty$.

\begin{prop}
\label{f1-1}
For distinct points $a,b\in \R$, for constants $\lambda_a,\lambda_b>0$, and for $u>0$, it holds that
\begin{align}
\label{f1-2}
\lim_{c\to \pm \infty}h^B(c)\P_x[\Gamma_{\eta_u^c}]=\varphi_{a,b}^{(\pm 1),\lambda_a,\lambda_b}(x)
\end{align}
for all $x\in \R$, and
\begin{align}
\label{f1-3}
\lim_{c\to \pm \infty}h^B(c)\P_{X_t}[\Gamma_{\eta_u^c}]=\varphi_{a,b}^{(\pm 1),\lambda_a,\lambda_b}(X_t)\ \mathrm{a.s.\ and\ in}\ L^1(\P_x).
\end{align}
\end{prop}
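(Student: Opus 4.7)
I would reduce the claim to the hitting time clock result (Proposition \ref{d38}) by splitting $\P_x[\Gamma_{\eta_u^c}]$ at $T_c$. Since $X$ is recurrent, $T_c<\infty$ almost surely, and the strong Markov property at $T_c$ yields the clean factorization
\begin{align*}
\P_x[\Gamma_{\eta_u^c}] = \P_x[\Gamma_{T_c}]\cdot \P_c[\Gamma_{\eta_u^c}],
\end{align*}
because $\Gamma_{T_c}$ is $\F_{T_c}$-measurable and $\P_c[\Gamma_{\eta_u^c}]$ is a deterministic constant. Multiplying by $h^B(c)$, the first factor converges to $\varphi_{a,b}^{(\pm1),\lambda_a,\lambda_b}(x)$ by Proposition \ref{d38}, so the assertion (\ref{f1-2}) reduces to showing $\P_c[\Gamma_{\eta_u^c}]\to 1$ as $c\to\pm\infty$.

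For this remaining factor I would use excursion theory at $c$. Under $\P_c$ no local time at $a$ or $b$ is accumulated while the process sits at $c$, so for $y\in\{a,b\}$ all contributions come from excursions, namely $L^y_{\eta_u^c} = \int_{[0,u]}\int_{\mathcal{D}} L^y_{\zeta(e)}(e)\,N^c(dl\otimes de)$. The exponential formula for the Poisson random measure $N^c$ with intensity $dl\otimes n^c(de)$ then gives
\begin{align*}
\P_c[\Gamma_{\eta_u^c}] = \exp\!\left(-u\cdot n^c\!\left[1-e^{-\lambda_a L^a_\zeta-\lambda_b L^b_\zeta}\right]\right).
\end{align*}
The integrand vanishes on excursions that visit neither $a$ nor $b$, so
\begin{align*}
n^c\!\left[1-e^{-\lambda_a L^a_\zeta-\lambda_b L^b_\zeta}\right] \le n^c(T_a<T_c)+n^c(T_b<T_c) = \frac{1}{h^B(a-c)}+\frac{1}{h^B(b-c)},
\end{align*}
where the equality is Proposition \ref{b39} applied after translation of the Lévy process by $c$. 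Proposition \ref{b37} sends both summands to $0$ as $c\to\pm\infty$, which completes the reduction and yields (\ref{f1-2}).

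For the process-valued statement (\ref{f1-3}), the same factorization $\P_{X_t}[\Gamma_{\eta_u^c}]=\P_{X_t}[\Gamma_{T_c}]\cdot\P_c[\Gamma_{\eta_u^c}]$ holds $\P_x$-pathwise, with the second factor a deterministic sequence in $[0,1]$ converging to $1$. Hence the almost sure and $L^1(\P_x)$ convergence of $h^B(c)\P_{X_t}[\Gamma_{T_c}]$ supplied by (\ref{d40-1}) of Proposition \ref{d38} transfers to the product by a standard triangle-inequality splitting, giving (\ref{f1-3}). The only subtle step is the application of the exponential formula together with the identification of $L^y_{\eta_u^c}$ as an integral against $N^c$; both are standard consequences of the Itô excursion theory for excursions from $c$, and the bound above ensures $n^c$-integrability without any additional moment hypothesis.
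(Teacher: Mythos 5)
Your proof is correct and follows the same overall strategy as the paper: factorize $\P_x[\Gamma_{\eta_u^c}]=\P_x[\Gamma_{T_c}]\cdot\P_c[\Gamma_{\eta_u^c}]$ and then show via the exponential formula for the excursion point process at $c$ that $\P_c[\Gamma_{\eta_u^c}]\to 1$, after which Proposition \ref{d38} gives both limits. Your execution of the two sub-steps is somewhat more economical than the paper's (which verifies the factorization by a case analysis over the orderings of $T_a,T_b,T_c$, and estimates $n^c[1-e^{-\lambda_aL^a_{T_c}-\lambda_bL^b_{T_c}}]$ event by event, whereas you use the strong Markov property at $T_c$ directly together with the crude bound by $n^c(T_a<T_c)+n^c(T_b<T_c)=1/h^B(a-c)+1/h^B(b-c)\to 0$), but the ideas are the same.
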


Before proving Proposition \ref{f1-1}, we prove the following lemma.

\begin{lem}
\label{f1}
For distinct points $a,b\in \R$, for $u>0$, and for constants $\lambda_a,\lambda_b>0$, it holds that
\begin{align}
\label{f3}
\lim_{c\to \pm \infty}\mathbb{P}_c\left[\Gamma_{\eta_u^c}\right]=1.
\end{align}
\end{lem}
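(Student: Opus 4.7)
The plan is to sandwich $\mathbb{P}_c[\Gamma_{\eta_u^c}]$ between $1$ (trivially, as $\Gamma_t \le 1$) and the probability that both local times at $a$ and $b$ vanish up to $\eta_u^c$, on which event $\Gamma_{\eta_u^c}=1$. The claim then reduces, via the union bound, to proving that
\begin{align*}
\mathbb{P}_c\bigl(L^a_{\eta_u^c}>0\bigr)\to 0\quad \text{and}\quad \mathbb{P}_c\bigl(L^b_{\eta_u^c}>0\bigr)\to 0 \qquad \text{as } c\to \pm \infty.
\end{align*}

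For the first of these, I would appeal to the excursion description: under $\mathbb{P}_c$, the family $(\epsilon^c_l)_{l\ge 0}$ is a Poisson point process with characteristic measure $n^c$, and the time $\eta^c_u$ corresponds to local time $u$ at $c$. The local time $L^a$ accumulates only inside excursions from $c$ that visit $a$, so the event $\{L^a_{\eta_u^c}>0\}$ coincides with the event that at least one excursion indexed by $l\in (0,u]$ satisfies $T_a<T_c$. By the standard thinning/mapping property of Poisson point processes, the count of such excursions is Poisson distributed with parameter $u\cdot n^c(T_a<T_c)$, and hence
\begin{align*}
\mathbb{P}_c\bigl(L^a_{\eta^c_u}>0\bigr)=1-\exp\bigl(-u\cdot n^c(T_a<T_c)\bigr).
\end{align*}

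By spatial homogeneity of the L\'evy process, Proposition \ref{b39} translated to excursions from $c$ gives $n^c(T_a<T_c)=1/h^B(a-c)$, and Proposition \ref{b37} yields $h^B(a-c)\to \infty$ as $c\to \pm \infty$. Thus $\mathbb{P}_c(L^a_{\eta^c_u}>0)\to 0$, and the same argument with $b$ replacing $a$ gives the second limit. Combining these with the opening sandwich finishes the proof.

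The argument is quite soft; the only point requiring care is the identification of $\{L^a_{\eta^c_u}>0\}$ with the excursion-theoretic event, but this is immediate from the pathwise decomposition of $L^a$ along excursions away from $c$. Beyond that, no subtle estimate is needed, because the only hard analytical input, namely $h^B\to \infty$, has already been supplied in Section \ref{S2}.
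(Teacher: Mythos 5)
Your proof is correct, but it takes a genuinely different and somewhat more elementary route than the paper's. The paper writes $L^a_{\eta_u^c}=\sum_{v\le u}L^a_{T_c}(\epsilon^c(v))$ as in (\ref{f4}) and applies the exponential formula for Poisson point processes to get the exact identity $\mathbb{P}_c[\Gamma_{\eta_u^c}]=\exp\{-u\,n^c[1-e^{-\lambda_aL^a_{T_c}-\lambda_bL^b_{T_c}}]\}$ of (\ref{f6}); it then splits the excursion integral according to the order in which $a$, $b$, $c$ are hit and drives each piece to zero via the Markov property under $n^c$ together with $n^c(T_a<T_b\wedge T_c)\le n^c(T_a<T_c)=1/h^B(a-c)\to 0$. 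You instead sandwich $\Gamma_{\eta_u^c}$ between $1$ and the indicator of $\{L^a_{\eta_u^c}=0\}\cap\{L^b_{\eta_u^c}=0\}$, reduce to two first-passage events by the union bound, and note that the number of excursions with index in $(0,u]$ that visit $a$ is Poisson with mean $u\,n^c(T_a<T_c)=u/h^B(a-c)$, so that $\mathbb{P}_c(L^a_{\eta_u^c}>0)=1-e^{-u/h^B(a-c)}\to 0$. Both arguments rest on exactly the same two inputs, Propositions \ref{b39} and \ref{b37} (plus translation invariance to pass from $n^0$ to $n^c$); yours trades the exact formula (\ref{f6}), which the paper does not reuse elsewhere, for a crude bound, and in exchange dispenses with the exponential formula and the case analysis over hitting orders. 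The one point you flag, the identification of $\{L^a_{\eta_u^c}>0\}$ with the event that some excursion indexed by $(0,u]$ hits $a$, is precisely the decomposition (\ref{f4}) combined with the regularity of $a$ for itself, so it is sound; note also that for your purposes only the inclusion $\{L^a_{\eta_u^c}>0\}\subseteq\{N^c((0,u]\times\{T_a<T_c\})\ge 1\}$ is needed, not the equality.
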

\begin{proof}
Since $L^a$ may increase during excursion away from $c$, we can write
\begin{align*}
\label{f4}
L_{\eta_u^c}^a=\sum_{v\le u}(L_{\eta_v^c}^a-L_{\eta_{v-}^c}^a)=\sum_{v\le u}L_{T_c}^a(\epsilon^c(v)).
\stepcounter{equation}\tag{\theequation}
\end{align*}
Thus, using Theorem 2.7 of \cite{Kyp}, we have
\begin{align*}
\label{f6}
\mathbb{P}_c\left[\Gamma_{\eta_u^c}\right]&=\mathbb{P}_c\left[\exp \left\{-\sum_{v\le u}\left(\lambda_aL_{T_c}^a(\epsilon^c(v))+\lambda_bL_{T_c}^b(\epsilon^c(v))\right)\right\}\right]\\
&=\exp \left\{-\int_{(0,u]\times \mathcal{D}}\left(1-e^{-\lambda_a L_{T_c}^a(e)-\lambda_bL_{T_c}^{b}(e)}\right)dt\otimes n^c(de)\right\}\\
&=\exp \left\{-un^c\left[1-e^{-\lambda_a L_{T_c}^a-\lambda_bL_{T_c}^{b}}\right]\right\}\\
&=\exp\left\{-un^c\left[1-e^{-\lambda_a L_{T_c}^a};\ T_a<T_c<T_b\right]\right\}\\
&\qquad \times \exp\left\{-un^c\left[1-e^{-\lambda_b L_{T_c}^b};\ T_b<T_c<T_a\right]\right\}\\
&\qquad \times \exp\left\{-un^c\left[1-e^{-\lambda_a L_{T_c}^a-\lambda_bL_{T_c}^{b}};\ T_a<T_b<T_c\right]\right\}\\
&\qquad \times \exp\left\{-un^c\left[1-e^{-\lambda_a L_{T_c}^a-\lambda_bL_{T_c}^{b}};\ T_b<T_a<T_c\right]\right\}.
\stepcounter{equation}\tag{\theequation}
\end{align*}
By the strong Markov property for $n^c$, and by (\ref{b38}) and (\ref{b40}), we have
\begin{align*}
\label{f7}
&n^c\left[1-e^{-\lambda_a L_{T_c}^a};\ T_a<T_c<T_b\right]\\
&=\int n^c(1_{\{T_a<T_b\wedge T_c\}};\ X_{T_a}\in dx)\mathbb{P}_x^c[1-e^{-\lambda_aL_{T_c}^a},\ T_c<T_b]\\
&=n^c(T_a<T_b\wedge T_c)\mathbb{P}_x[1-e^{-\lambda_aL_{T_c}^a},\ T_c<T_b]\\
&\le n^c(T_a<T_c)\mathbb{P}_x[1-e^{-\lambda_aL_{T_c}^a},\ T_c<T_b]\to 0
\stepcounter{equation}\tag{\theequation}
\end{align*}
as $c\to \pm \infty$. Moreover, by the strong Markov property for $n^a$ and $\mathbb{P}_a$, we have
\begin{align*}
\label{f8}
&n^c\left[1-e^{-\lambda_a L_{T_c}^a-\lambda_bL_{T_c}^{b}};\ T_a<T_b<T_c\right]\\
&=n^c(T_a<T_b\wedge T_c)\left\{\mathbb{P}_a(T_b<T_c)-\mathbb{P}_a\left[\Gamma_{T_c};\ T_b<T_c\right]\right\}\\
&=n^c(T_a<T_b\wedge T_c)\left\{\mathbb{P}_a(T_b<T_c)-\mathbb{P}_a\left[e^{-\lambda_aL_{T_b}^a};\ T_b<T_c\right]\mathbb{P}_b\left[\Gamma_{T_c}\right]\right\}\to 0
\stepcounter{equation}\tag{\theequation}
\end{align*}
as $c\to \pm \infty.$ Therefore, (\ref{f3}) follows (\ref{f6}), (\ref{f7}), and (\ref{f8}). 
\end{proof}

Let us proceed to the proof of Proposition \ref{f1-1}.

\begin{proof}
It holds that for distinct points $a,b,c\in \R$,
\begin{align}
\label{f10}
\mathbb{P}_x\left[\Gamma_{\eta_u^c}\right]=\P_c\left[\Gamma_{\eta_{u}^c}\right]\cdot \mathbb{P}_x\left[\Gamma_{T_c}\right].
\end{align}
Indeed, we divide the expectation into the following events: 
\begin{align}
\label{f11}
\{T_c<T_a\wedge T_b\}\cup\{T_a<T_c<T_b\}\cup\{T_b<T_c<T_a\}\cup\{T_a<T_b<T_c\}\cup\{T_b<T_a<T_c\}.
\end{align}
We calculate the expectation of each term.

First, by the strong Markov property at $T_c$, we have
\begin{align*}
\label{f12}
\mathbb{P}_x&\left[\Gamma_{\eta_u^c};\ T_c<T_a\wedge T_b\right]=\mathbb{P}_x(T_c<T_a\wedge T_b)\mathbb{P}_c\left[\Gamma_{\eta_u^c}\right].
\stepcounter{equation}\tag{\theequation}
\end{align*}

Next, by the strong Markov property at $T_a$ and $T_c$, we have
\begin{align*}
\label{f13}
\mathbb{P}_x\left[\Gamma_{\eta_u^c};\ T_a<T_c<T_b\right]&=\mathbb{P}_x(T_a<T_b\wedge T_c)\mathbb{P}_a\left[\Gamma_{\eta_u^c};\ T_c<T_b\right]\\
&=\mathbb{P}_x(T_a<T_b\wedge T_c)\mathbb{P}_a\left[e^{-\lambda_aL_{T_c}^a};\ T_c<T_b\right]\mathbb{P}_c\left[\Gamma_{\eta_u^c}\right].
\stepcounter{equation}\tag{\theequation}
\end{align*}

Finally, by the strong Markov property, we have
\begin{align*}
\label{f14}
\mathbb{P}_x\left[\Gamma_{\eta_u^c};\ T_a<T_b<T_c\right]&=\mathbb{P}_x(T_a<T_b\wedge T_c)\mathbb{P}_a\left[\Gamma_{\eta_u^c};\ T_b<T_c\right]\\
&=\mathbb{P}_x(T_a<T_b\wedge T_c)\mathbb{P}_a\left[e^{-\lambda_aL_{T_b}^a};\ T_b<T_c\right]\mathbb{P}_b\left[\Gamma_{\eta_u^c}\right].
\stepcounter{equation}\tag{\theequation}
\end{align*}
Now, we have
\begin{align*}
\label{f15}
\mathbb{P}_b\left[\Gamma_{\eta_u^c}\right]&=\mathbb{P}_b\left[\Gamma_{\eta_u^c};\ T_a<T_c\right]+\mathbb{P}_b\left[\Gamma_{\eta_u^c};\ T_c<T_a\right]\\
&=\mathbb{P}_b\left[e^{-\lambda_bL_{T_a}^b};\ T_a<T_c\right]\mathbb{P}_a\left[\Gamma_{\eta_u^c}\right]+\mathbb{P}_b\left[e^{-\lambda_bL_{T_c}^b};\ T_c<T_a\right]\mathbb{P}_c\left[\Gamma_{\eta_u^c}\right].
\stepcounter{equation}\tag{\theequation}
\end{align*}
Similarly, we have
\begin{align*}
\label{f16}
\mathbb{P}_a\left[\Gamma_{\eta_u^c}\right]=&\mathbb{P}_a\left[e^{-\lambda_aL_{T_b}^a};\ T_b<T_c\right]\mathbb{P}_b\left[\Gamma_{\eta_u^c}\right]+\mathbb{P}_a\left[e^{-\lambda_aL_{T_c}^a};\ T_c<T_b\right]\mathbb{P}_c\left[\Gamma_{\eta_u^c}\right].
\stepcounter{equation}\tag{\theequation}
\end{align*}
Thus, by solving the simultaneous equations (\ref{f15}) and (\ref{f16}), we have
\begin{align*}
\label{f17}
\mathbb{P}_b\left[\Gamma_{\eta_u^c}\right]=\frac{\mathbb{P}_c\left[\Gamma_{\eta_u^c}\right]\left\{\mathbb{P}_b\left[e^{-\lambda_bL_{T_a}^b};\ T_a<T_c\right]\mathbb{P}_a\left[e^{-\lambda_aL_{T_c}^a};\ T_c<T_b\right]+\mathbb{P}_b\left[e^{-\lambda_bL_{T_c}^b};\ T_c<T_a\right]\right\}}{1-\mathbb{P}_b\left[e^{-\lambda_bL_{T_a}^b};\ T_a<T_c\right]\mathbb{P}_a\left[e^{-\lambda_aL_{T_b}^a};\ T_b<T_c\right]}.
\stepcounter{equation}\tag{\theequation}
\end{align*}
Summarizing the above calculation, we obtain
\begin{align*}
\label{f18}
\mathbb{P}_x&\left[\Gamma_{\eta_u^c};\ T_a<T_b<T_c\right]=\mathbb{P}_c\left[\Gamma_{\eta_u^c}\right]\mathbb{P}_x\left[\Gamma_{T_c};\ T_a<T_b<T_c\right].
\stepcounter{equation}\tag{\theequation}
\end{align*}
Summing up the above discussion (\ref{f12}), (\ref{f13}), and (\ref{f18}), we obtain the equation (\ref{f10}).

Therefore, (\ref{f1-2}) follows from (\ref{d40}) and (\ref{f10}). The remaining equations are clear from (\ref{d40-1}) of Proposition \ref{d38}.
\end{proof}

We define
\begin{align}
\label{f22}
N_{a,b,t}^{c,u,\lambda_a,\lambda_b}:&=h^B(c)\mathbb{P}_x\left[\Gamma_{\eta_u^c};\ t<\eta_{u}^c|\F_{t}\right],\\
\label{f23}
M_{a,b,t}^{c,u,\lambda_a,\lambda_b}:&=h^B(c)\mathbb{P}_{x}\left[\Gamma_{\eta_u^c}|\F_{t}\right]
\end{align}
for $c\in \R$ and $u>0$.

\begin{thm}{\label{f24}}
Let $x\in \R$. Then, it holds that 
\begin{align}
\label{f25}
\lim_{c\to \pm\infty}N_{a,b,t}^{c,u,\lambda_a,\lambda_b}=\lim_{c\to \pm\infty}M_{a,b,t}^{c,u,\lambda_a,\lambda_b}=M_{a,b,t}^{(\pm 1),\lambda_a,\lambda_b}\ \mathrm{a.s.\ and\ in}\ L^1(\mathbb{P}_x).
\end{align}
Consequently, if $M_{a,b,0}^{(\pm 1),\lambda_a,\lambda_b}>0$ under $\mathbb{P}_x$, it holds that 
\begin{align}
\label{f26}
\lim_{c\to \pm \infty}\frac{\mathbb{P}_x\left[F_t\cdot \Gamma_{\eta_u^c}\right]}{\mathbb{P}_x\left[\Gamma_{\eta_u^c}\right]}=\mathbb{P}_x\left[F_t \cdot\frac{M_{a,b,t}^{(\pm 1),\lambda_a,\lambda_b}}{M_{a,b,0}^{(\pm 1),\lambda_a,\lambda_b}}\right]
\end{align}
for all bounded $\F_t$-measurable functionals $F_t$.
\end{thm}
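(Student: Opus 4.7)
The plan is to mirror the proof of Theorem \ref{c46}. First, apply the Markov property at time $t$: using additivity of local times $L_s^y=L_t^y+L_{s-t}^y\circ\theta_t$ for $y\in\{a,b,c\}$, we obtain, on $\{t<\eta_u^c\}$, the identity $\eta_u^c=t+\eta_{u-L_t^c}^c\circ\theta_t$ (valid since $L^c$ is continuous), and hence
\begin{align*}
N_{a,b,t}^{c,u,\lambda_a,\lambda_b}=1_{\{t<\eta_u^c\}}\,h^B(c)\,e^{-\lambda_a L_t^a-\lambda_b L_t^b}\,\mathbb{P}_{X_t}\bigl[\Gamma_{\eta_{u-L_t^c}^c}\bigr].
\end{align*}

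The key observation is that since $X$ has \cadlag\ paths, the range $\{X_s:0\le s\le t\}$ is $\mathbb{P}_x$-a.s.\ bounded, so there exists a random $c_0=c_0(\omega)<\infty$ such that for all $|c|\ge c_0$ we have $L_t^c=0$ and $\eta_u^c=\infty$. For such $c$, the indicator equals one and the subscript $u-L_t^c$ reduces to $u$. Then (\ref{f1-3}) of Proposition \ref{f1-1} gives
\begin{align*}
\lim_{c\to\pm\infty}h^B(c)\,\mathbb{P}_{X_t}\bigl[\Gamma_{\eta_u^c}\bigr]=\varphi_{a,b}^{(\pm 1),\lambda_a,\lambda_b}(X_t)\quad\text{a.s.\ and in }L^1(\mathbb{P}_x),
\end{align*}
and multiplication by the $[0,1]$-valued factor $e^{-\lambda_a L_t^a-\lambda_b L_t^b}$ preserves both convergences, so $N_{a,b,t}^{c,u,\lambda_a,\lambda_b}\to M_{a,b,t}^{(\pm 1),\lambda_a,\lambda_b}$ a.s.\ and in $L^1(\mathbb{P}_x)$.

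For the difference $M-N$, note that
\begin{align*}
M_{a,b,t}^{c,u,\lambda_a,\lambda_b}-N_{a,b,t}^{c,u,\lambda_a,\lambda_b}=h^B(c)\,\Gamma_{\eta_u^c}\,1_{\{\eta_u^c\le t\}},
\end{align*}
since $\Gamma_{\eta_u^c}$ is $\F_t$-measurable on $\{\eta_u^c\le t\}$. Almost sure convergence to zero is again immediate from the bounded range argument. The main obstacle is the $L^1$ convergence here, since the explosion $h^B(c)\to\infty$ has to be absorbed by the vanishing of $\mathbb{P}_x(\eta_u^c\le t)$. I would bypass this indirectly via Scheff\'{e}'s lemma: the martingale property of $(M_{a,b,s}^{(\pm 1),\lambda_a,\lambda_b})_{s\ge 0}$ from Theorem \ref{d45} yields $\mathbb{P}_x[M_{a,b,t}^{(\pm 1),\lambda_a,\lambda_b}]=\varphi_{a,b}^{(\pm 1),\lambda_a,\lambda_b}(x)$, while (\ref{f1-2}) of Proposition \ref{f1-1} gives $\mathbb{P}_x[M_{a,b,t}^{c,u,\lambda_a,\lambda_b}]=h^B(c)\mathbb{P}_x[\Gamma_{\eta_u^c}]\to\varphi_{a,b}^{(\pm 1),\lambda_a,\lambda_b}(x)$. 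Since $M_{a,b,t}^{c,u,\lambda_a,\lambda_b}\ge 0$ converges a.s.\ to $M_{a,b,t}^{(\pm 1),\lambda_a,\lambda_b}$ with matching mean limit, Scheff\'{e} delivers $L^1$ convergence of $M$, and so of $M-N$, proving (\ref{f25}).

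Finally, (\ref{f26}) follows by the standard argument common to Theorems \ref{c46}, \ref{d45}, and \ref{e26}: for any bounded $\F_t$-measurable $F_t$,
\begin{align*}
h^B(c)\,\mathbb{P}_x[F_t\cdot\Gamma_{\eta_u^c}]=\mathbb{P}_x\bigl[F_t\cdot M_{a,b,t}^{c,u,\lambda_a,\lambda_b}\bigr]\longrightarrow\mathbb{P}_x\bigl[F_t\cdot M_{a,b,t}^{(\pm 1),\lambda_a,\lambda_b}\bigr]
\end{align*}
by the $L^1$ convergence just established, while $h^B(c)\mathbb{P}_x[\Gamma_{\eta_u^c}]\to\varphi_{a,b}^{(\pm 1),\lambda_a,\lambda_b}(x)=M_{a,b,0}^{(\pm 1),\lambda_a,\lambda_b}$; dividing yields (\ref{f26}) under the positivity hypothesis, completing the proof.
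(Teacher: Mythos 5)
Your proposal is correct, and it supplies in full a proof that the paper omits (the paper only says the argument is ``almost the same as that of Theorem \ref{c46}, based on Proposition \ref{f1-1}''). Your skeleton is indeed the intended one: the Markov property at time $t$ (using $\eta_u^c=t+\eta_{u-L_t^c}^c\circ\theta_t$ on $\{t<\eta_u^c\}$), the a.s.\ observation that for $|c|$ beyond the (a.s.\ finite) range of the path up to time $t$ one has $L_t^c=0$ and $\eta_u^c>t$, Proposition \ref{f1-1} for the limit, and the identity $M-N=h^B(c)\,\Gamma_{\eta_u^c}1_{\{\eta_u^c\le t\}}$. The one place where the template of Theorem \ref{c46} genuinely breaks down is the $L^1$ control of $M-N$: for the exponential clock this was killed by the deterministic bound $qr_q(0)\to 0$, whereas here no such bound exists, and your Scheff\'{e} argument --- matching $\mathbb{P}_x[M_{a,b,t}^{c,u,\lambda_a,\lambda_b}]=h^B(c)\mathbb{P}_x[\Gamma_{\eta_u^c}]\to\varphi_{a,b}^{(\pm1),\lambda_a,\lambda_b}(x)$ against $\mathbb{P}_x[M_{a,b,t}^{(\pm1),\lambda_a,\lambda_b}]=\varphi_{a,b}^{(\pm1),\lambda_a,\lambda_b}(x)$ from the martingale property of Theorem \ref{d45} --- is exactly the right repair and is what the literature (e.g.\ Takeda--Yano) does in this situation. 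You correctly identified this as the real obstacle.

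One small imprecision: before invoking Scheff\'{e} you assert that (\ref{f1-3}) already gives $L^1$ convergence of $N_{a,b,t}^{c,u,\lambda_a,\lambda_b}$. That is not immediate, because $N$ involves $\mathbb{P}_{X_t}[\Gamma_{\eta_{u-L_t^c}^c}]$, which dominates (rather than is dominated by) $\mathbb{P}_{X_t}[\Gamma_{\eta_u^c}]$ since $\Gamma$ is nonincreasing in time; the identification with the $L^1$-convergent quantity of (\ref{f1-3}) holds only beyond the random threshold $c_0(\omega)$, so no uniform domination is available at that stage. This costs you nothing, however: once Scheff\'{e} gives $M_{a,b,t}^{c,u,\lambda_a,\lambda_b}\to M_{a,b,t}^{(\pm1),\lambda_a,\lambda_b}$ in $L^1$, the inequalities $0\le N\le M$ together with the a.s.\ convergence of $N$ yield its $L^1$ convergence (and that of $M-N$ to $0$) by generalized dominated convergence. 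I would simply drop the premature $L^1$ claim and let the Scheff\'{e} step carry the whole $L^1$ burden.
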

The proof is almost the same as that of Theorem \ref{c46}, based on Proposition \ref{f1-1} and so we omit it.

\section*{Acknowledgements} 
This research was supported by ISM. 
The research of K. Iba and K. Yano was supported by 
JSPS Open Partnership Joint Research Projects grant no. JPJSBP120209921. 
The research of K. Yano was supported by 
JSPS KAKENHI grant no.'s 19H01791, 19K21834 and 21H01002.

\bibliographystyle{plain}

\end{document}